\theoremstyle{plain}
\newtheorem{theorem}{Theorem}[section]
\newtheorem{lemma}[theorem]{Lemma}
\newtheorem{proposition}[theorem]{Proposition}
\newtheorem{assumption}[theorem]{Assumption}
\theoremstyle{definition}
\newtheorem{definition}[theorem]{Definition}
\newtheorem{example}[theorem]{Example}
\newtheorem{set-up}[theorem]{Geometric set-up}
\newtheorem{remark}[theorem]{Remark}
\DeclareMathOperator{\cyl}{cyl}
 \DeclareMathOperator{\tr}{tr}
\DeclareMathOperator{\Ind}{Ind}
\newcommand{\forget}[1]{}
\def  \nuint {\raise10pt\hbox{$\nu$}\kern-6pt\int}
\newcommand\Tr{\operatorname{Tr}}
\def \Sp {{\cal S}}
\def\Id{{\rm Id}}
\newcommand\ch{\operatorname{ch}}
\newcommand\Di{D\kern-6pt/}
\newcommand\cDi{{\mathcal D}\kern-6pt/}
\newcommand\spi{S\kern-6pt/}
\newcommand \cspi{\Sp\kern-6pt/}
\newcommand\CC{\mathbb C}
\def \cal {\mathcal}
\newcommand\RR{\mathbb R}
\newcommand\ZZ{\mathbb Z}
\newcommand\pa{\partial}
\newcommand\Ker{\operatorname{Ker}}
\definecolor{darkgreen}{cmyk}{1,0,1,.2}
\definecolor{m}{rgb}{1,0.1,1}
\newcommand{\bint}{\sideset{^{\mathrm{b}\!\!\!}}{_{Y}}\int}
\global\let\c@equation=\c@theorem}
\begin{document}

\title[Primary and secondary invariants]{Primary and secondary invariants of Dirac operators on $G$-proper manifolds}

%    Information for first author
%\author{Author One}
%    Address of record for the research reported here
%\address{Department of Mathematics, Louisiana State University, Baton
%Rouge, Louisiana 70803}
%    Current address
%\curraddr{Department of Mathematics and Statistics,
%Case Western Reserve University, Cleveland, Ohio 43403}
%\email{xyz@math.university.edu}
%    \thanks will become a 1st page footnote.
%\thanks{The first author was supported in part by NSF Grant \#000000.}

\author{Paolo Piazza}
\address{Dipartimento di Matematica, Sapienza Universit\`{a} di Roma, I-00185 Roma, Italy}
\email{piazza@mat.uniroma1.it}
\thanks{The first author was supported in part by {\it Ministero Universit\`a e Ricerca}, through the PRIN {\it Spazi di moduli e teoria di Lie}.}
%    Information for second author

\author{Xiang Tang}
\address{Department of Mathematics and Statistics, Washington University, St. Louis, MO, 63130, U.S.A.}
\email{xtang@math.wustl.edu}
\thanks{The second author was supported in part by NSF Grants DMS-1800666 and DMS-1952551.}

%    General info
\subjclass[2010]{Primary: 58J20. Secondary: 58B34, 58J22, 58J42, 19K56.}

\dedicatory{For the 40th birthday of cyclic cohomology}

\keywords{Lie groups, proper actions, higher orbital integrals, delocalized cyclic cocycles,
index classes, relative pairing, excision, Atiyah-Patodi-Singer higher index theory, delocalized eta invariants,
higher delocalized eta invariants.}

\begin{abstract}
In this article, we survey the recent constructions of cyclic cocycles on the Harish-Chandra Schwartz algebra of a connected real reductive Lie group $G$ and their applications to higher index theory for proper cocompact $G$-actions. 
\end{abstract}

\maketitle

\tableofcontents

\section{Introduction}

Let $G$ be a connected reductive linear Lie group. The study of index theory on proper cocompact $G$-manifolds goes back to the 70s, in work of Atiyah \cite{Atiyah:L2}, Atiyah-Schmid \cite{Atiyah-Schmid}, and Connes-Moscovici \cite{Connes-Moscovici:L2}. It is deeply connected to geometric analysis, noncommutative geometry, and representation theory. In September 2021, in the Conference ``Cyclic Cohomology at 40", organized by the Fields Institute, the authors reported 
on the recent studies of cyclic cocycles on the Harish-Chandra Schwartz algebra $\mathcal{C}(G)$ and their applications to higher index theory. In this paper, we expand our two talks and provide a more complete account of related topics.  The article is organized as follows. 

In Section \ref{sec:invariant elliptic}, we introduce the geometric set up for proper cocompact $G$-manifolds and define the $C^*$-algebraic index of an invariant elliptic operator on a proper cocompact $G$-manifold. We also explain the connection to the Connes-Kasparov isomorphism conjecture/theorem.  

In Section \ref{sec:cyclic cocycles}, after briefly introducing the definition of cyclic cohomology, we present two approaches to construct cyclic cocycles on the Harish-Chandra Schwartz algebra $\mathcal{C}(G)$. One source of cyclic cocycles is the differentiable group cohomology of $G$; the other source is the 
orbital integral and its generalization  to higher cyclic cocycles.

In Section \ref{sec:higher indices}, we study higher index theory for proper cocompact $G$-manifolds that  have no boundaries. More precisely, we discuss two higher index theorems corresponding to the above two types of cyclic cocycles on $\mathcal{C}(G)$. We also explain these higher index theorems in the special case of $X=G/K$, with their connections to representation theory. 

In Section \ref{sec:boundary}, we extend the study of higher index theory to proper cocompact $G$-manifolds with boundary. We introduce the geometric set up for proper cocompact $G$-manifold with boundary and adapt the Melrose $b$-calculus to investigate the higher index on these manifolds.   

In Section \ref{sec:aps index}, we introduce the framework of relative cyclic cohomology and apply it to present two higher Atiyah-Patodi-Singer
 index theorems associated to the cyclic cocycles introduced in Section \ref{sec:cyclic cocycles}. Higher rho invariants are introduced as spectral invariants for proper cocompact $G$-manifolds without boundary.

In Section \ref{sec:application}, we discuss applications of these results   to interesting  problems in topology and geometry; in particular
we introduce higher genera for $G$-proper manifolds without boundary and explain their stability properties and their cut-and-paste
behaviour; we also discuss bordism invariance of the rho numbers we have introduced. \\

\noindent{\bf Acknowledgments.} 
We take this opportunity to thank Alain Connes for his foundational role in the development of cyclic cohomology. Ideas and questions by Connes and his collaborators have inspired many developments that we are going to survey in this article. We also  thank the organizers of the conference, ``Cyclic Cohomology at 40", Alain Connes, Katia Consani,
Masoud Khalkali and Henri Moscovici, for organizing such an interesting online event, providing the participants great opportunities to meet and exchange ideas.

We are very happy  to thank Markus Pflaum, Hessel Posthuma and Yanli Song for inspiring discussions.

\section{Invariant elliptic operators on proper cocompact manifolds} 
\label{sec:invariant elliptic}

Let $G$ be a connected reductive linear Lie group with a maximal compact subgroup $K$. We introduce in this section the index of a $G$-equivariant elliptic operator on a proper cocompact $G$-manifold. 

\subsection{Geometry of Proper Cocompact Manifolds} 
\begin{definition} A smooth manifold $X$ is called a $G$-proper manifold if $X$ is equipped with a proper $G$-action, that is, the associated map 
\[
G\times X\to X\times X,\quad (g,x)\mapsto (x,gx),\quad g\in G,x\in X,
\]
is a proper map. This implies that the stabilizer groups $G_x$ of all points $x\in X$ are compact and that the quotient space $X\slash G$ is Hausdorff. The action is 
said to be {\em cocompact} if the quotient $X/G$ is compact. Finally, a cocompact $G$-proper manifold can always be endowed with a cut-off
function $c_X$, a smooth compactly supported function on $X$  satisfying
\[
\int_G c_X (g^{-1}x)dg=1,\quad\mbox{for all}~x\in X
\]
\end{definition}

The following theorem was proved by Abels \cite{abels:slice} for proper cocompact $G$-manifolds, and will be used crucially in our study of  index theory.
%on proper cocompact $G$-manifolds.  
\begin{theorem}\label{thm:slice}Let $X$ be a proper cocompact $G$-manifold (with or without boundary). There is a compact submanifold $Z$ (with or without boundary) which is equipped with a $K$-action such that 
\[
X\cong G\times _K Z. 
\]
\end{theorem}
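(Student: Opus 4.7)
The plan is to realize $X$ as an associated bundle over $G/K$, with the compact submanifold $Z$ appearing as the fiber over the identity coset $eK$.

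First, I would construct a smooth $G$-equivariant map $\pi \colon X \to G/K$. Since $G$ is connected reductive, the homogeneous space $G/K$ carries a $G$-invariant Riemannian metric of nonpositive sectional curvature, so it is a complete CAT(0) manifold. On such a manifold, the barycenter (center of mass) of any compactly supported probability measure is uniquely defined, depends smoothly on the measure, and is equivariant under isometries. Using the cut-off function $c_X$ (available since the action is cocompact), I push forward the compactly supported probability measure $c_X(g^{-1}x)\,dg$ on $G$ to $G/K$ via $g \mapsto g \cdot eK$, and let $\pi(x)$ be its barycenter. The normalization $\int_G c_X(g^{-1}x)\,dg = 1$ together with a change of variables in $g$ yields $\pi(hx) = h \cdot \pi(x)$; smoothness of $\pi$ comes from smooth dependence of the barycenter on the measure and smoothness of $c_X$.

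Next I would set $Z := \pi^{-1}(eK)$. Because $\pi$ is $G$-equivariant and $G$ acts transitively on $G/K$, the differential $d\pi_x$ is already surjective at every $x \in X$ (the vectors tangent to the $G$-orbit through $x$ project onto $T_{\pi(x)}G/K$). Hence $\pi$ is a submersion and $Z$ is a smooth submanifold, of the same codimension as $\dim G/K$. The restriction of the $G$-action to $K$ preserves $Z$, since $K$ is precisely the stabilizer of $eK$. I would then consider
\[
\Phi \colon G \times_K Z \longrightarrow X, \qquad [g,z] \longmapsto g \cdot z,
\]
check that it is a well-defined $G$-equivariant smooth bijection (surjectivity: given $x$, choose $g$ with $\pi(x) = gK$; then $g^{-1}x \in Z$ and $x = g\cdot(g^{-1}x)$; injectivity: if $gz = g'z'$ then $g^{-1}g' \in K$ because it stabilizes $eK$), and upgrade it to a diffeomorphism via the inverse function theorem, using that $\pi$ is a submersion. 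Finally, $Z/K \cong X/G$ is compact by cocompactness, and since $K$ is compact this forces $Z$ itself to be compact. The case with boundary is handled identically, with $\partial Z = \pi^{-1}(eK) \cap \partial X$.

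The main difficulty is the very first step: producing a \emph{smooth} equivariant map $\pi$. A merely continuous one can be extracted from Cartan's fixed-point theorem applied to the compact stabilizers $G_x \subset G$, but smoothness and the submersion property genuinely use the geometry of $G/K$ as a nonpositively curved symmetric space through the smoothness of the barycenter construction on CAT(0) manifolds. Once $\pi$ is in hand, the remaining arguments are standard slice-theoretic manipulations.
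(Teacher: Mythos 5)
Your proof is correct, and since the paper simply cites Abels \cite{abels:slice} for this result without reproducing a proof, there is no in-text argument to compare it against. The barycenter (or "center of mass") strategy you use is the standard modern way to establish Abels' global slice theorem in the smooth Lie-theoretic setting: push the compactly supported probability measure $c_X(g^{-1}x)\,dg$ forward to the Hadamard manifold $G/K$, take its (unique, smoothly varying, isometry-equivariant) barycenter to get a smooth $G$-equivariant map $\pi\colon X\to G/K$, observe that $G$-equivariance plus transitivity on the target forces $\pi$ to be a submersion, and set $Z=\pi^{-1}(eK)$. All the intermediate verifications you sketch — well-definedness of $[g,z]\mapsto gz$, its bijectivity via $\pi$-equivariance, the dimension count making the local diffeomorphism into a global one, compactness of $Z$ from $Z/K\cong X/G$, and the identical treatment of the boundary case using that $eK$ is also a regular value of $\pi|_{\partial X}$ — are sound.

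It is worth knowing that Abels' original 1974/75 argument is formulated more generally for proper actions of (almost connected) locally compact groups on metrizable spaces and therefore cannot invoke Riemannian barycenters directly; his proof is more topological. In the smooth reductive-Lie-group/manifold setting relevant here, your Riemannian proof is shorter and cleaner, at the cost of using the nonpositive curvature of $G/K$ (which you correctly flag as the one genuinely Lie-theoretic input). The only small caveat to keep in mind is that the smooth dependence of the barycenter on the measure should be stated as an application of the implicit function theorem to the equation $\int_G \exp_p^{-1}(gK)\,c_X(g^{-1}x)\,dg=0$, using that the Hessian of the energy functional is uniformly positive definite on a Hadamard manifold — you wave at this but it is exactly where the completeness and nonpositive curvature enter, so it deserves an explicit sentence if this were to be written up in full.
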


\begin{assumption}We assume in this paper  that the manifolds $G/K$, $X$ and $Z$  are even dimensional. 
\end{assumption}

Choose a $K$-invariant inner product on the Lie algebra $\mathfrak{g}$ of $G$. %And we 
We then have an orthogonal decomposition $\mathfrak{g}=\mathfrak{k}\oplus \mathfrak{p}$ where $\mathfrak{k}$ is the Lie algebra of $K$ and $\mathfrak{p}$ its orthogonal complement. Accordingly, we have an isomorphism
\begin{equation}\label{eq:TY}
TY\cong G\times _K (\mathfrak{p}\oplus TZ),
\end{equation}
where in the above decomposition we have abused notation  and employed $\mathfrak{p}$ for the trivial vector bundle $\mathfrak{p}\times Z\to Z$.  

\begin{definition}\label{def:metric} We say that the $G$-invariant metric $\mathbf{h}$ on $X$ is slice-compatible, if it is obtained by a $K$-invariant metric on $Z$ and a $K$-invariant metric on $\mathfrak{p}$ via Equation (\ref{eq:TY}). 
\end{definition}

We assume that that adjoint representation $\operatorname{Ad}: K\to SO(\mathfrak{p})$ admits a lift $\widetilde{\operatorname{Ad}}: K\to \operatorname{Spin}(\mathfrak{p})$. By the following exact sequence of vector bundles and the two out of three lemma for $\operatorname{Spin}^c$-structures, 
\[
0 \to G \times_K \mathfrak{p} \to TX \to G \times_K TZ \to 0, 
\] 
we see that a $K$-invariant $\operatorname{Spin}^c$-structure on $Z$
induces a $G$-invariant  $\operatorname{Spin}^c$-structure on $X$. 

\begin{definition}\label{def:spinc}
We shall say that a $G$-invariant Spin$^c$-structure on $X$ is slice-compatible if it is associated  to a K-invariant metric and a K-invariant Spin$^c$-structure on the slice  in the above way.
\end{definition}

We consider a $G$-equivariant twisted spinor bundle $E$ on $X$, i.e. $E=\mathcal{S}\otimes W$ (for the spinor bundle $\mathcal{S}$ on $X$ and a $G$-equivariant Hermitian vector bundle $W$).
We shall consider $E$ as arising from a $K$-invariant twisted spinor bundle $E_Z$ on $Z$, defined by 
the slice-compatible Spin$^c$-structure on $Z$ and by  an auxiliary  $K$-equivariant vector bundle on $Z$. 
Then $E$ is equipped with a $G$-equivariant $\text{Cliff}(TX)$-module structure. A \emph{Clifford connection} $\nabla^E$ on $E$ is a connection on $E$ satisfying
\[
\left[\nabla^{E}_V, \mathbf{c}(V') \right] = \mathbf{c}(\nabla^{TX}_VV'), \quad V, V' \in C^\infty(X, TX)
\]
where $\mathbf{c}$ is the Clifford action and $\nabla^{TX}$ is the Levi-Civita connection on $X$. The Dirac operator associated to the Clifford connection is given by the following composition
\[
D \colon C^\infty(X, E)\xlongrightarrow{\nabla^{E}} C^\infty(X, T^*X \otimes E)  \cong C^\infty(X, TX\otimes E) \xlongrightarrow{\mathbf{c}}C^\infty(X, E). 
\]

The vector bundle $E$ on $X$ is defined by a $K$-equivariant vector bundle $E_{Z}$ on $Z$, i.e. 
\[
E \cong G \times_K( S_\mathfrak{p} \otimes E_{Z}), 
\]
and $E_{Z}$ admits a $K$-equivariant $\text{Cliff}(TZ)$-module structure. Accordingly, we can decompose 
\[
L^2(Y, E) \cong \left[L^2(G) \otimes S_\mathfrak{p} \otimes L^2(Z, E_{Z}) \right]^K. 
\]
By the assumption that the metric is slice-compatible, we introduce a new split Dirac operator $D_{\rm split}$  by the following formula 
\begin{equation}
\label{dec dirac}
D_{\rm split} = D_{G,K} \hat{\otimes} 1 + 1\hat{\otimes} D_{Z},  
\end{equation}
where $D_{G,K}$ is the Spin$^c$-Dirac operator on $(L^2(G)\otimes S_\mathfrak{p})^K$, $D_{Z}$ is a $K$-equivariant Dirac operator on $E_{Z}$, and $\hat{\otimes}$ means the graded tensor product. We make the following observation, 
\[
[D_{G,K} \hat{\otimes} 1,1\hat{\otimes} D_{Z}] = 0, \quad D_{\rm split}^2 = D_{G,K}^2 + D_{Z}^2. 
\]
There was a confusion in literature that the split Dirac operator $D_{\rm split}$ is identical to $D$, e.g. \cite[Section 3]{HS-decomposition}. It is not hard to see that the two Dirac operators $D$ and $D_{\rm split}$ have identical principal symbols when the metric is slice compatible. Accordingly, the two operators defines the same $G$-equivariant $K$-homology class on $X$ and therefore the same index element $\Ind(D)$ and $\Ind(D_{\rm split})$ in $K_0(C^*_r(G))$. However, in general, the two operators are not identical. And therefore, the spectral invariants associated to $D$ and $D_{\rm split}$ are a priori not same. We refer to \cite{PPST} for a more detailed discussion. 

\subsection{Roe's $C^*$-algebra}

Roe's $C^*$-algebra \cite{roe:open} for a complete proper metric space $X$ is a powerful tool to study higher index theory.  

More precisely, for $r>0$, let $C_{fp}(X\times X)$ be the space of bounded continuous functions on $X\times X$ satisfying $f(x,y)=0$ for $d(x,y)> r$ for some $r>0$. Elements of $C_{fp}(X\times X)$ naturally define kernels of bounded linear operators on $L^2(X)$. Define $C^*(X)$ to be the completion of $C_{fp}(X\times X)$ with respect to the operator norm on $L^2(X)$. Composition of kernels makes $C^*(X)$ into a $C^*$-algebra. For a proper cocompact $G$-manifold $X$ equipped with a $G$-invariant proper complete metric, we consider $C_{fp}^G(X\times X)$ which is a subspace of $C_{fp}(X\times X)$ consisting of $G$-invariant bounded continuous functions. And we denote the completion of $C_{fp}^G(X\times X)$ with respect to the operator norm by $C^*_G(X)$, which is a $C^*$-algebra with respect to operator composition. 

In this paper, we will work with smooth subalgebra of $C^*_G(X)$ using the slice theorem (Theorem \ref{thm:slice}), i.e. $X\cong G\times _K Z$. Under the above diffeomorphism, $C_{fp}^G(X\times X)$ 
can be identified with $\mathcal{A}_G(X)$ which is defined as
\[
\mathcal{A}_G(X):= \big( C_c(G) \hat{\otimes} C(S\times S)\big) ^{K\times K}.
\]
 Let $\Psi^{-\infty}(S)$ be the space of smoothing operators on $X$. Then we consider the subspace $\mathcal{A}^c_G(X)\subset \mathcal{A}_G(X)$ with
\[
\mathcal{A}^c_G(X):=\big( C^\infty_c(G) \hat{\otimes} \Psi^{-\infty} (S)\big) ^{K\times K}\subset \mathcal{A}_G(X). 
\]

This is an algebra and it  corresponds, under Abels' identification, to the algebra of smoothing  $G$-equivariant operators on $X$ 
of $G$-compact support. Notice that
the latter or, equivalently,  $\mathcal{A}^c_G(X)$ is a subalgebra of the Roe algebra $C^*_G (X)$ in a natural way.
% the subspace $\mathcal{A}^c_G(X)\subset \mathcal{A}_G(X)$ of $G$-equivariant $G$-compactly supported smoothing operators is 
%\[
%\mathcal{A}^c_G(X):=\big( C^\infty_c(G) \hat{\otimes} \Psi^{-\infty} (S)\big) ^{K\times K}\subset \mathcal{A}_G(X). 
%\]
More generally, if $E$ is a $G$-equivariant vector bundle on $X$, define 
\[
\mathcal{A}^c_G(X, E):= \big( C^\infty_c(G) \hat{\otimes} \Psi^{-\infty}(S, E|_S)\big) ^{K\times K},
\]
where $\Psi^{-\infty}(S, E|_S)$ is the space of smoothing operators on $E|_S$.  We remark that $C^\infty_c(G)$ and $\Psi^{-\infty}(S)$ are Fr\'echet algebras, and the tensor products in the above definitions of $\mathcal{A}^c_G(X)$  and $\mathcal{A}^c_G(X, E)$ are projective tensor products. The following 
 description of  $\mathcal{A}^c_G(X, E)$ is proved in \cite[Prop. 1.7]{PP2}
\begin{equation}\label{iso-c}
\begin{split}
&\mathcal{A}^c_G(X, E)\\
\cong &\big\{ \Phi: G\to \Psi^{-\infty}(S, E|_S),\text{\ smooth, compactly supported and }K\times K \text{\ invariant}\big\}.
\end{split}
\end{equation}
\subsection{The Harish-Chandra Schwartz algebra $\mathcal{C}(G)$}
\label{subsec:Harish-Chandra}
Let $\widehat{G}$ be the tempered dual of isomorphism classes of unitary irreducible representations of $C^*_r(G)$. In noncommutative geometry, $C^*_r(G)$ can be viewed as the algebra of ``continuous functions" on $\widehat{G}$. In the following, we introduce the noncommutative version of smooth functions on $\widehat{G}$ %$G$,
 which is the Harish-Chandra Schwartz algebra $\mathcal{C}(G)$. 

Let $v_0$ be a unit vector in the spherical representation  $\pi$ of $G$, and $\Theta$ be the Harish-Chandra spherical function on $G$ defined by 
\[
\Theta(g):=\langle v_0,  \pi(g)v_0\rangle. 
\]

Let $\mathfrak{g}$ be the Lie algebra of $G$ and $U(\mathfrak{g})$ be the universal enveloping algebra associated to $\mathfrak{g}$.  For $V, W\in U(\mathfrak{g})$, let $L_V$ be  the differential operator  on $G$ associated to the left $G$-action on $G$, and $R_W$ be the differential operator on $G$ associated to the right $G$-action on $G$.  

Consider the Cartan decomposition $G=K\exp\mathfrak{p}$ where $\mathfrak{p}$ is the Lie algebra of the associated parabolic subgroup $P$. For $X\in \mathfrak{p}$, let $||X||$ be a $K$-invariant Euclidean norm on $\mathfrak{p}$. Let $L: G\to \mathbb{R}^+$ be the function on $G$ defined by,
\[
L(g):=||X|| 
\]
for $g=k\exp(X)$ with $k\in K$ and $X\in \mathfrak{p}$.  

\begin{definition}\label{defn:HC}  Define the seminorm $\nu_{V, W, m}(-)$ by
\[
\nu_{V, W, m}(f):= \sup_{g\in G} \lVert |1+L(g)|^m \Theta(g)^{-1} L_V R_W\big(f\big)(g) \rVert,\ f\in C_c^\infty(G). 
\] 
The Harish-Chandra Schwartz algebra\footnote{In the latest version of our paper \cite{PPST}, inspired by \cite{Lafforgue},  we found it more convenient to work with a Banach algebra version of the Harish-Chandra Schwartz algebra, which we call the Lafforgue algebra. We refer the reader to \cite{PPST} for its precise definition.}  $\mathcal{C}(G)$ for $G$ is the space of smooth functions $f$ on $G$ such that 
\[
\nu_{V,W, m}(f)<\infty, \;\;\text{for} \;\;V, W\in U(\mathfrak{g}), m\geq 0. 
\]
\end{definition}

By \cite{Lafforgue}, the Harish-Chandra Schwartz algebra $\mathcal{C}(G)$ is a subalgebra of $C^*_r(G)$ stable under holomorphic functional calculus. Therefore, we have
\[
K_i(\mathcal{C}(G))\cong K_i(C^*_r(G)), \ i=0,1. 
\]

Starting from the Harish-Chandra Schwartz algebra $\mathcal{C}(G)$ we can also define an algebra of smoothing $G$-equivariant operators on $X$
by considering 
%\[
%\mathcal{A}^\infty_G(X):= \big( \mathcal{C}(G) \hat{\otimes} \Psi^{-\infty}(S)\big) ^{K\times K},
%\]
%and more generally 
\[
\mathcal{A}^\infty_G(X, E):= \big( \mathcal{C}(G) \hat{\otimes} \Psi^{-\infty}(S, E|_S)\big) ^{K\times K}.
\]
One can extend \eqref{iso-c} and prove the following description of $\mathcal{A}^\infty_G(X, E)$. 
\begin{equation}\label{descrption-smoothings-HC}
\mathcal{A}^\infty_G(X,E)\cong\left\{\begin{array}{l}\Phi:G\to \Psi^{-\infty}(S),~K\times K~\mbox{invariant and}\\
~g\mapsto 
\nu_{V,W,m} (\| \Phi(g)\|_\alpha)\;\;\text{bounded}\;\;~\forall \alpha,m,V, W\end{array} \right\}
\end{equation}
The algebra $\mathcal{A}^\infty_G(X,E)$ is a subalgebra of the Roe $C^*$-algebra $C^*_G (X,E)$ and it is not difficult to prove, see \cite[Proposition 3.9]{PP1}, that it is dense and holomorphically closed.
Thus we have
\[
K_i(\mathcal{A}^\infty_G(X, E))\cong K_i(C^*_G (X,E)), \ i=0,1. 
\]

We end this subsection by pointing out that there exists a Morita isomorphism 
 $\mathcal{M}$ between $C^*_G (M,E)$ and  $C^*_r (G)$; this can be 
 justified by general principles, since
 $C^*_G (M,E)$ is isomorphic to the $C^*$-algebra of compact operators $\mathbb{K} (\mathcal{E})$, with
   $\mathcal{E}$ denoting  the $C^*_r (G)$-Hilbert module obtained by closing the space of compactly supported 
sections of $E$ on $X$, $C^\infty_c (X,E)$, endowed with the  $C^*_r G$-valued inner product
 $(e,e^\prime)_{C^*_r G} (g):= (e,g\cdot e^\prime)_{L^2 (X,E)}, \quad e,e^\prime \in C^\infty_c (X,E)\,,\;g\in G$ (and it is well known that
 then $K_i(\mathbb{K} (\mathcal{E}))=K_* (C^*_r (G))$.)
Following Hochs-Wang \cite{Hochs-Wang-HC}, we prefer to implement explicitly this isomorphism as follows: we consider 
$\mathcal{A}^\infty_G (M,E)$ and  $\mathcal{C}(G)$ and
consider  a partial trace map $\Tr_S: \mathcal{A}^\infty_G (X,E)\to \mathcal{C}(G)$
associated to the slice $S$: if $f\otimes k\in (\mathcal{C}(G))\hat{\otimes}\Psi^{-\infty}(S,E|_S))^{K\times K}$ then 
\[
\Tr_S (f\otimes k):= f \Tr (T_k)=f\int_S \tr k(s,s)ds,
\] 
with $T_k$ denoting the smoothing operator on $S$ defined by $k$ and
$\Tr (T_k)$ its functional analytic trace on $L^2 (S,E|_S)$. It is proved in  \cite{Hochs-Wang-HC} that this map
induces the isomorphism $\mathcal{M}$ between $K_* (C^*_G (X,E))$ and  $K_* (C^*_r (G))$.

\subsection{The index class of a $G$-invariant elliptic operator}
Recall  that  $X$ is  even-dimensional. Let $D$ 
be an odd $\ZZ_2$-graded Dirac operator, equivariant with respect to the $G$-action.
Recall, first of all,  the classical Connes-Skandalis idempotent. Let $Q$ be a $G$-equivariant parametrix
of $G$-compact support with remainders $S_\pm\in \mathcal{A}^c (X,E)$; consider the $2\times 2$ idempotent
\begin{equation}\label{CS-projector}
P_{Q}:= \left(\begin{array}{cc} S_{+}^2 & S_{+}  (I+S_{+}) Q\\ S_{-} D^+ &
I-S_{-}^2 \end{array} \right).
\end{equation}
This produces a well-defined class 
\begin{equation}\label{CS-class}
\operatorname{Ind_c} (D):= [P_{Q}] - [e_1]\in K_0 (\mathcal{A}_G^c (X,E))\;\;\text{with}\;\;e_1:=\left( \begin{array}{cc} 0 & 0 \\ 0&1
\end{array} \right).
\end{equation}
%The $C^*$-index associated to $D$ is the class $\Ind_{C^*(M,E)} (D)\in K_0 (C^* (M,E)^G)$ obtained by taking the 
%image of the class  $\operatorname{Ind_c} (D)$  under the homomorphism  $\iota_* : K_* (\mathcal{A}_G^{c} (M,E))\rightarrow K_0 (C^* 
%(M,E)^G)$.

\begin{definition}
The $C^*$-index associated to $D$ is the class $\Ind_{C^*_G(M,E)} (D)\in K_0 (C^*_G (X,E))$ obtained by 
considering $ [P_{Q}] - [e_1]$ as a formal difference of idempotents with entries in $C^*_G (X,E)$,
under the continuous inclusion $\iota : \mathcal{A}_G^{c} (X,E)\hookrightarrow C^*_G (X,E)$.\\
%We shall also denote this class more simply by $\Ind_{C^*} (D)$.
%taking the 
%image of the class  $\operatorname{Ind_c} (D)$  under the homomorphism  $\iota_* : K_* (\mathcal{A}_G^{c} (M,E))\rightarrow K_0 (C^* (M,E)^G)$.
\end{definition}
One can also give a definition of $\Ind_{C^*_G(X,E)} (D)\in K_0 (C^* (X,E)^G)$ using Coarse Index Theory,  as in the book of Higson and Roe, see
\cite{hr-book};
the compatibility of the two definitions is proved in \cite[Proposition 2.1]{PS-Stolz}.

We denote the image through the Morita isomorphism $\mathcal{M}$ of the index class $\Ind_{C^*_G(X,E)}  (D)\in K_0 (C^*_G (X,E))$ in the group
$K_0 (C^*_r (G))$ by $\Ind_{C^*_r (G)} (D)$. There are other, well-known descriptions of the latter index class:
one, following Kasparov, see  \cite{kasparov-functor}, describes the $C^*_r (G)$-index class
as the difference of two finitely generated projective $C^*_r (G)$-modules, using the invertibility modulo $C^*_r (G)$-compact
operators of (the bounded-transform of) $D$; the other description
is via assembly  and $KK$-theory, as in the classic article by Baum, Connes and Higson \cite{BCH}.
All these descriptions of the class $\Ind_{C^*_r (G)} (D)\in K_0 (C^*_r (G)) $ are equivalent. See \cite{roe-comparing}
and \cite[Proposition 2.1]{PS-Stolz}.

\medskip
\noindent
There is another way of expressing the index class $\Ind_{C^*_G(X,E)}  (D)\in K_0 (C^*_G (X,E))$; this is  due to Connes--Moscovici \cite{ConnesMoscovici}
and employs  the parametrix
\begin{equation}
 Q
:= \frac{I-\exp(-\frac{1}{2} D^- D^+)}{D^- D^+} D^+
\end{equation}
with
$I-Q D^+ = \exp(-\frac{1}{2} D^- D^+)$, $I-D^+ Q =  \exp(-\frac{1}{2} D^+ D^-)$. This particular choice of parametrix produces
 the idempotent
\begin{equation}
\label{cm-idempotent1}
V_{{\rm CM}} (D)=\left( \begin{array}{cc} e^{-D^- D^+} & e^{-\frac{1}{2}D^- D^+}
\left( \frac{I- e^{-D^- D^+}}{D^- D^+} \right) D^-\\
e^{-\frac{1}{2}D^+ D^-}D^+& I- e^{-D^+ D^-}
\end{array} \right)
\end{equation}
where ${\rm CM}$ stands for Connes and Moscovici. We certainly have $\Ind_{C^*_G(M,E)} (D)
= [V_{{\rm CM}} (D)] - [e_1]$.\\

The following result is proved in Piazza-Posthuma\footnote{In \cite{Hochs-Wang-HC} and \cite{hst} the authors established, through a 
different argument with respect to the one provided in \cite{PP2}, that $V_{{\rm CM}} (D_{\rm split})$
is an element in the smooth algebra $M_{2\times 2} (\mathcal{A}_G^\infty (X,E))$
(with the identity adjoined). However, $D$ and $D_{\rm split}$ are not identical. One might complete this proof of Proposition \ref{prop:CM-rapid} by applying an argument involving the Volterra series connecting $V_{{\rm CM}} (D)$ to $V_{{\rm CM}} (D_{\rm split})$. } \cite{PP2}: 
%\footnote{\textcolor{red}{In \cite{Hochs-Wang-HC} and \cite{hst} , the authors proved that $\Ind(D_{\rm split})$ is an element in the smooth algebra $M_{2\times 2} (\mathcal{A}_G^\infty (X,E))$(with the identity adjoined). However, $D$ and $D_{\rm split}$ are not identical. One might complete the proof of Proposition \ref{prop:CM-rapid} by applying an argument involving the Volterra series connecting $\Ind(D)$ to $\Ind(D_{\rm split})$. }}:
\begin{proposition}\label{prop:CM-rapid}
The idempotent $V_{{\rm CM}} (D)$  is an element in  $M_{2\times 2} (\mathcal{A}_G^\infty (X,E))$
(with the identity adjoined).
\end{proposition}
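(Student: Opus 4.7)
My plan is to reduce to showing $e^{-tD^2}\in\mathcal A^\infty_G(X,E)$ for every $t>0$, to establish this first for the split Dirac operator $D_{\rm split}$, and then to transfer to $D$ via a Duhamel expansion, as suggested in the footnote.

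First I will inspect the entries of $V_{\rm CM}(D)$ and rewrite them using the identity $(1-e^{-x})/x=\int_0^1 e^{-sx}\,ds$, so that each entry becomes either a heat operator $e^{-tD^2}$, a parameter-integral of such, a composition of such with the first-order operator $D^\pm$, or (in the $(2,2)$-position) the identity minus such; adjoining the identity then absorbs the non-smoothing piece, explaining the form of the statement. The description \eqref{descrption-smoothings-HC} shows that $\mathcal A^\infty_G(X,E)$ is closed under composition with $G$-equivariant first-order differential operators: the seminorms $\nu_{V,W,m}$ already encode control of all left/right $U(\mathfrak g)$-derivatives, and differentiation in the slice direction preserves the smoothing class on the compact $Z$. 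Thus the problem reduces to proving $e^{-tD^2}\in\mathcal A^\infty_G(X,E)$ for all $t>0$.

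Next I will prove this for $D_{\rm split}$. Since $D_{G,K}$ and $D_Z$ commute, the heat operator factors as
\[
e^{-tD_{\rm split}^2}=e^{-tD_{G,K}^2}\,\hat\otimes\,e^{-tD_Z^2}.
\]
The second factor is a smoothing operator on the compact $Z$, hence lies in $\Psi^{-\infty}(S,E|_S)$. The first factor is convolution with a smooth $K\times K$-biinvariant function $\Phi_t:G\to\End(S_\mathfrak p)$, and the classical Gaussian heat-kernel bounds on the symmetric space $G/K$ supply estimates of the form
\[
|L_V R_W\Phi_t(g)|\;\le\;C_{V,W,m,t}\,(1+L(g))^{-m}\,\Theta(g),\qquad V,W\in U(\mathfrak g),\ m\in\mathbb N,
\]
which are precisely the bounds required by \eqref{descrption-smoothings-HC}. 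This places $e^{-tD_{\rm split}^2}$ in $\mathcal A^\infty_G(X,E)$ and recovers the results of Hochs--Wang and Han--Song--Tang cited in the footnote.

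Finally I will transfer from $D_{\rm split}$ to $D$. Slice-compatibility forces $D$ and $D_{\rm split}$ to share the same principal symbol, so $R:=D^2-D_{\rm split}^2$ is a $G$-invariant differential operator of order at most one. The Duhamel/Volterra expansion
\[
e^{-tD^2}=\sum_{k\ge 0}(-1)^k\!\int\limits_{\Delta_k(t)}\!e^{-s_0 D_{\rm split}^2}R\,e^{-s_1 D_{\rm split}^2}\cdots R\,e^{-s_k D_{\rm split}^2}\,ds,
\]
with $\Delta_k(t)=\{s_i\ge 0,\ \sum s_i=t\}$, presents each summand as a composition of $\mathcal A^\infty_G(X,E)$-elements interspersed with $R$, hence itself in $\mathcal A^\infty_G(X,E)$; convergence in the Fr\'echet topology follows from the standard estimate $\|R\,e^{-sD_{\rm split}^2}\|\le Cs^{-1/2}$ together with the Harish-Chandra rapid decay of the heat kernel. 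The chief obstacle will be Step 2: extracting Harish-Chandra Schwartz decay from the Gaussian bounds on $G/K$ is delicate, since $\Theta(g)^{-1}$ grows exponentially in $L(g)$ and must be balanced against the Gaussian factor $e^{-L(g)^2/(4t)}$ while maintaining uniform control of arbitrarily high left/right invariant derivatives.
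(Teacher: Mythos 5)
Your plan follows the alternative route sketched in the paper's footnote to Proposition \ref{prop:CM-rapid}: establish that $V_{\rm CM}(D_{\rm split})$ has entries in $\mathcal{A}_G^\infty(X,E)$ via the tensor factorization $e^{-tD_{\rm split}^2}=e^{-tD_{G,K}^2}\hat\otimes\,e^{-tD_Z^2}$ (this is the content of Hochs--Wang and Hochs--Song--Tang), and then transfer to $D$ itself by the Volterra/Duhamel series. The paper itself attributes the statement to a different argument in \cite{PP2}, which works directly with Schwartz-kernel estimates for parametrices rather than with the split-operator reduction, so your route is genuinely a distinct one; what it buys is the ability to invoke the existing heat-kernel analysis on the symmetric space $G/K$ and then perturb.

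However, the proposal as written is a plan, not a proof, and it leans on two assertions whose verification constitutes essentially all of the real work. First, you assert that Gaussian heat-kernel estimates on $G/K$ yield $\nu_{V,W,m}$-bounds on $\Phi_t$ uniformly in all left/right $U(\mathfrak g)$-derivatives; you rightly flag this as ``delicate,'' but it is precisely the content of the Hochs--Wang/HST analysis, not a corollary of the classical on-diagonal Gaussian bound, since one must beat the exponential growth of $\Theta(g)^{-1}$ while differentiating arbitrarily often. Second, the Volterra convergence argument is stated only in operator norm, $\|Re^{-sD_{\rm split}^2}\|\le Cs^{-1/2}$; to conclude that the sum lands in $\mathcal{A}_G^\infty(X,E)$ you need the analogous $s^{-1/2}$-type bound in each Fr\'echet seminorm $\nu_{V,W,m}$ composed with $\Tr_S$, and you need to check that composition with the first-order operator $R$ (and with $D^\pm$ in Step~1) is a bounded map on $\mathcal{A}_G^\infty(X,E)$ in that topology. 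These are plausible but unproved, so as it stands the argument has a genuine gap at the two places you yourself identified as the chief obstacles.
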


\noindent
%Using the inclusion $\iota: \mathcal{A}_G^\infty (X,E)\hookrightarrow C^*_G (X,E)$ we 
%see that we can express the $C^*_G (M,E)$-index class as the image of $[V_{{\rm CM}} (D)]-[e_1]$ in  $K_0 (C^*_G (X,E))$.
%then have 
%\begin{equation}\label{CM=index}
%\Ind_{C^*(M,E)^G} (D)=\iota_*  [V_{D}]-[e_1]\in K_0 (C^* (M,E)^G)
%\end{equation}
As $\mathcal{A}^\infty (M,E)$ is holomorphically closed
 in $C^*_G(X,E)$ we have
  \begin{equation}\label{CM=index-bis}
  \begin{split}
&\Ind_{C^*_G(X,E)} (D)\equiv  \Ind_{\mathcal{A}^\infty (M,E)} (D)\\
=& [V_{{\rm CM}} (D)]-[e_1]\in K_0 (\mathcal{A}^\infty (M,E))=
K_0 (C^* (M,E)^G).
\end{split}
\end{equation}
We call $ [V_{{\rm CM}} (D)]-[e_1]\in K_0 (\mathcal{A}^\infty (M,E))$ the {\it smooth} index class associated to $D$. 

\medskip
As in \cite{moscovici-wu}, \cite{GMPi} we shall also consider the adjoint $V^*_{{\rm CM}} (D)$, which is again an element with entries  in  $\mathcal{A}_G^\infty (X,E)$. It is 
easy to prove that $ [V_{{\rm CM}} (D)]-[e_1]=  [V^*_{{\rm CM}} (D)]-[e_1] \quad\text{in}\quad  K_0 (\mathcal{A}^\infty_G (X,E))=K_0 (C^*_G (X,E))$
through an explicit  homotopy.
\begin{definition}\label{def:true-CM}
We set $V(D): = V_{{\rm CM}} (D)\oplus V^*_{{\rm CM}} (D).$
%We consider $V (D)$ because, as for the Wassermann projector in the work of Connes and Moscovici, it  will allow us  to use the rescaling technique of  Getzler
%in order to prove an index theorem for the higher $C^*$-indices defined in Subsection \ref{subsect:c*-indeces-closed}.
\end{definition}
Let $f_1=e_1\oplus e_1$.
Then  
\begin{equation}\label{twice}
[V(D)]-[f_1] = 2  ( [V_{{\rm CM}} (D)]-[e_1])\equiv 2\Ind_{C^*(M,E)^G} (D)\quad\text{in}\quad K_0 (C^*_G (X,E)).
\end{equation}
We call $V(D)$ the symmetrized Connes-Moscovici projector;  as we shall see, it plays an important role in the proof of explicit higher index formulae.

Let $X=G/K$ with the proper left $G$ action.  For a highest weight $\mu$ of $K$, let $V_\mu$ be the corresponding unitary irreducible $K$-representation associated to $\mu$, and $\widetilde{V}_\mu$ be the associated vector bundle on $G/K$ defined by $E_\mu:=G\times V_\mu/K$. We consider the Dirac operator $D_\mu$ associated to the vector bundle $E_\mu$ on $X$. Applying the construction in Definition \ref{def:true-CM}, we obtain an element $\Ind_{C^*_G(X, E_\mu)}(D_\mu)\in K_0(C^*_G(X, E_\mu)^G)=K_0(C^*_r(G))$. Varying $\mu$, we obtain the following morphism of abelian groups
\[
\Ind: \mathfrak{Rep}(K)\to K_0(C^*_r(G)),
\]
where $\mathfrak{Rep}(K)$ is the representation ring of the compact group $K$.

Connes and Kasparov conjectured in the early 80s that the above index map $\Ind$ is an isomorphism, \cite{BCH}, providing a geometric approach to compute the $K$-theory groups of $C^*_r(G)$. 
 The study of the index map has been one of the driving forces in the study of higher index theory on proper cocompact $G$-manifolds. The isomorphism theorem is now established in great generality: it is satisfied by all almost connected topological groups \cite{nest-et-al}.

\section{Cyclic cocycles on $\mathcal{C}(G)$} 
\label{sec:cyclic cocycles}

In this section, we present two methods to construct cyclic cocycles on $\mathcal{C}(G)$. 
\subsection{Cyclic cohomology}
Cyclic cohomology was introduced by Alain Connes  \cite{Connes:IHES} as the noncommutative version of de Rham cohomology. We briefly recall it below. 
\begin{definition}
Let $A$ be a Fr\'echet algebra over $\mathbb{C}$.  The space of Hochschild cochains of degree $k$ of $A$ is defined to be the space
\[
C^k(A) \colon = \mathrm{Hom}_{\mathbb{C}}\big(A^{\otimes(k+1)}, \mathbb{C}\big)
\]
of all bounded $(k+1)$-linear functionals on $A$. The Hochschild codifferential $b \colon C^k(A) \to C^{k+1}(A)$ is defined by 
\[
\begin{aligned}
&b\Phi(a_0 \otimes \dots \otimes a_{k+1})  \\
=&\sum_{i=0}^k (-1)^i \Phi(a_0 \otimes \dots \otimes a_i a_{i+1} \otimes \dots \otimes a_{k+1}) + (-1)^{k+1} \Phi(a_{k+1}a_0 \otimes a_1 \otimes \dots \otimes a_{k}). 
\end{aligned}
\]
The Hochschild cohomology of $A$ is the cohomology of the complex $(C^\bullet(A), b)$. 
\end{definition}

\begin{definition}\label{defn:cyclic-coh}
A Hochschild $k$-cochain $\Phi \in C^k(A)$ is called \emph{cyclic} if 
\[
\Phi(a_k, a_0, \dots, a_{k-1}) = (-1)^k \Phi(a_0, a_1, \dots, a_k),\ \ \  \forall a_0, \dots, a_k \in A.
\]
The subspace $C^k_\lambda(A)$ of cyclic  cochains  is closed under the Hochschild codifferential. The cyclic cohomology $HC^k_\lambda(A)$ is  defined to be the cohomology of the subcomplex of  cyclic cochains. 

There exists a natural operator $S:HC^{k}_\lambda(A)\to HC^{k+2}_\lambda(A)$, the periodicity operator, and the periodic cyclic cohomology is defined as follows
\[
HP^{k}(A):=\lim_{n\to \infty} HC_\lambda^{k+2n}(A),\ k=0,1. 
\]
\end{definition}

To understand the definition of Hochschild and cyclic cohomology, we look at the definition for $k=0$. A degree 0 Hochschild cochain  $\Phi$ on $A$ is a linear functional on $A$. The coboundary of $\Phi$ is a $1$-cochain defined as follows
\[
b\Phi(a_0, a_1)=\Phi(a_0a_1)-\Phi(a_1a_0).
\]
The above formula for $b\Phi$ suggests that $\Phi$ is a Hochschild cocycle if $\Phi(a_0a_1)=\Phi(a_1a_0)$, $\forall a_0, a_1\in A$, i.e. $\Phi$ is a trace on $A$. For $k=0$, the cyclic property trivially holds. Hence, the degree 0 cyclic cohomology of $A$ consists of traces on $A$. In general, cyclic cohomology is a natural generalization of traces.

The cyclic cohomology can also be computed by the following $b$-$B$ bicomplex on $C^\bullet (A)$.
Consider the operator
$B : C^{k}(A)\rightarrow C^{k-1}(A)$ by the formula
\[
\begin{split}
  B \Phi( a_0\otimes \ldots \otimes a_{k-1} ) :=&
  \sum_{i=0}^{k-1}(-1)^{i(k-1)}\Phi(1, a_i, \cdots, a_{k-1}, a_0, \ldots, a_{i-1})\\
  &\qquad \qquad -(-1)^{i(k-1)}\Phi(a_i, 1, a_{i+1},\cdots, a_{k-1}, a_0,\cdots, a_{i-1}).
\end{split}
\]
This defines a differential, i.e., $B^2=0$, and we have $[B,b]=0$,
so we can form the $(b,B)$-bicomplex.
\[
\xymatrix{
\ldots&\ldots&\ldots\\
  C^2(A)\ar[u]^{b} \ar[r]^B & 
  C^1(A)\ar[u]^{b} \ar[r]^{B} &
  C^0(A) \ar[u]^b \\
  C^1(A) \ar[u]^{b} \ar[r]^B& C^0(A)\ar[u]^b\\
  C^0(A)\ar[u]^b
  }
\]
The cyclic cohomology $HC_\lambda^k(A)$ is isomorphic to the degree $k$ total cohomology of the above $b$-$B$ bicomplex, c.f. \cite[Sec. 2.4]{Loday}. 

For the application to index theory, we are interested in the pairing between cyclic cohomology and $K$-theory of $A$. Let $P = (p_{ij}), i, j = 1, \dots, m$ be an idempotent in $M_m(A)$, the space of $m\times m$ matrices with entries in $A$. %Recall that the Connes-Chern character is given for each $k \in \mathbb{N}$ by 
%\[
%\mathrm{Ch}_{2k} \colon K_0(A) \to HC^\lambda _{2k}(A) \colon [R] \mapsto \frac{1}{k!}\tr\Big(\underbrace{R\otimes \dots \otimes R}_{2k +1} \Big)
%\]
The following formula  
\[
\langle [\Phi], [P] \rangle \colon = \frac{1}{k!}\sum_{i_0,\cdots, i_{2k}=1}^m\Phi(p_{i_0i_1}, p_{i_1i_2}, ..., p_{i_{2k}i_0})
\]
defines a natural pairing between $[\Phi]\in HP^{\text{even}}(A)$ and $K_0(A)$, i.e. 
\[
\langle \ \cdot \ , \  \cdot  \ \rangle \colon   HP ^{\text{even}}(A) \otimes K_0(A)  \to \mathbb{C}. 
\]
In this article, we are interested in constructing cyclic cocycles of the Harish-Chandra Schwartz algebra $\mathcal{C}(G)$ and applying them 
on the one hand to study the structure of $K_\bullet(\mathcal{C}(G))\cong K_\bullet(C^*_r(G))$ via the above pairing and, on the other hand, to obtain
and study
numeric invariants of a Dirac operator  through the pairing of  the associated $K$-theory index class with such cyclic cocycles.

Our study  is inspired by the computation of the periodic cyclic cohomology of the group algebra of a discrete group, \cite{Burghelea, Connes}. Let $\Gamma$ be a discrete group and $\mathbb{C}\Gamma$ be the group algebra, i.e. $\mathbb{C}\Gamma:=\bigoplus_{\gamma\in \Gamma}\mathbb{C}\gamma$ such that $\gamma_1\ast \gamma_2:=\gamma_1\gamma_2$.  \begin{equation}\label{eq:cyclic-discrete group}
HP^\bullet(\mathbb{C}\Gamma)= \prod_{\hat{\gamma}\in \langle \Gamma \rangle'}\big( H^\bullet (N_\gamma, \mathbb{C}) \otimes HP^\bullet (\mathbb{C})\big) \times \prod_{\hat{\gamma} \in \langle \Gamma \rangle''} H^\bullet(N_\gamma, \mathbb{C}),
\end{equation}
where $\langle \Gamma\rangle$ is the set of conjugacy classes of $\Gamma$, $\langle\Gamma\rangle'\subset \langle \Gamma\rangle$ is the subset of classes of elements $\gamma\in \Gamma$ of finite order, $\langle \Gamma\rangle''$ its complement, and $N_\gamma$ is the normalizer of $\gamma$ defined by $C_\gamma/(\gamma)$, the quotient of the centralizer $C_\gamma$ of $\gamma$ by the subgroup $(\gamma)$ generated by $\gamma$.  Let $O_\gamma$ be the conjugacy class associated to $\gamma$.  On $\mathbb{C}\Gamma$, we have the following trace $\operatorname{tr}_\gamma$  associated to $O_\gamma$:
\begin{equation}\label{eq:trace}
\operatorname{tr}_\gamma(f)=\sum_{\alpha\in O_\gamma} f(\alpha)=\sum_{\beta\in G/C_\gamma} f(\beta \gamma \beta^{-1}),
\end{equation} 
where we recall that $C_\gamma$ is the centralizer of $\gamma$.  The trace $\operatorname{tr}_\gamma$ is a  degree 0 cyclic cocycle on $\mathbb{C}\Gamma$ associated to the conjugacy class $O_\gamma\in \langle \Gamma\rangle$. The computation (\ref{eq:cyclic-discrete group}) shows that there are many interesting higher degree cyclic cocycles on  the group algebra $\mathbb{C}\Gamma$.

A complete generalization of the computation (\ref{eq:cyclic-discrete group}) for a general Lie group is still under search. In (\ref{eq:cyclic-discrete group}), the cyclic cohomology of $\mathbb{C}\Gamma$ is decomposed into conjugacy classes of the group $\Gamma$. As a step toward understanding the cyclic cohomology of $\mathcal{C}(G)$, we will discuss below recent developments in the following special cases. 

\begin{enumerate}
\item We shall study 
cyclic cocycles associated to the identity conjugacy class of $G$. In \cite{Pflaum-Posthuma-Tang:vanEst, PP1} a chain map was introduced from differentiable group cohomology of $G$ to the cyclic cohomology of $\mathcal{C}(G)$ generalizing the trace $\operatorname{tr}_{e}$ associated to the identity element $e$. 
\item We shall study the orbital integral associated to a conjugacy class of $G$, a direct generalization of \eqref{eq:trace}.
 Next, following \cite{st}, we shall define {\it higher} orbital integrals on $\mathcal{C}(G)$ and study their properties.
 %generalizations of integrals over conjugacy classes,  were introduced on $\mathcal{C}(G)$, generalizing the trace $\operatorname{tr}_\gamma$ for a general $\gamma\in \Gamma$. 
\end{enumerate}
\subsection{Case of $\mathbb{R}^n$}  \label{subsection:Rn}In this subsection, we look at the abelian group $\mathbb{R}^n$. The Harish-Chandra Schwartz algebra $\mathcal{C}(\mathbb{R}^n)$ consists of Schwartz functions on $\mathbb{R}^n$ with the convolution product
\[
f\ast g (x)=\int_{\mathbb{R}^n}f(y)g(x-y)dy. 
\]
Under the Fourier transform, 
\[
\mathcal{F}(f)(\xi)=\int_{\mathbb{R}^n} f(x)\exp^{-2\pi i \xi\cdot x} dx, 
\]
the image of $\mathcal{F}(f)$ is a Schwartz function on $\widehat{\mathbb{R}}^n$ for $f\in \mathcal{C}(\mathbb{R}^n)$, and the convolution product is transformed to the pointwise multiplication
\[
f\cdot g (\xi)=f(\xi)g(\xi),\ \mathcal{F}(f\ast g)=\mathcal{F}(f)\cdot \mathcal{F}(g). 
\]

Using the Fourier transform, we conclude that the cyclic cohomology of $\mathcal{C}(\mathbb{R}^n)$ is isomorphic to the one of $\mathcal{S}(\widehat{\mathbb{R}}^n)$, which by \cite[Theorem 46]{Connes:IHES} is computed as follows
\[
HP^i(\mathcal{C}(\mathbb{R}^n))=\left\{\begin{array}{ll}
								0,&i\not\equiv n\ (mod\ 2),\\
								\mathbb{C},&i\equiv n\ (mod\ 2).
								\end{array}\right.
\]

On $\mathcal{S}(\widehat{\mathbb{R}}^n)$, the $n$-th cyclic cohomology group is generated by the following cocycle
\[
\Psi(\hat{f}_0, \cdots, \hat{f}_n)=\int_{\mathbb{R}^n} \hat{f}_0d\hat{f}_1\cdots d\hat{f}_n. 
\]

On $\mathcal{C}(\mathbb{R}^n)$, the Fourier transform of $\Psi$ has the following form. Define a function $C:\underbrace{\mathbb{R}^n\times \cdots \times \mathbb{R}^n}_n \to \mathbb{R}$ by 
\begin{equation}\label{eq:defn-c}
C(x_1, \cdots, x_n):= \left |\begin{array}{lll}x_1^1&\cdots &x_1^n\\ &\cdots & \\ x_n^1&\cdots &x_n^n\end{array}\right |,
\end{equation}
where we have written $x\in \mathbb{R}^n$ as $x=(x^1, \cdots, x^n)$. 

Define $\Phi$ to be a cocycle on $\mathcal{C}(\mathbb{R}^n)$ by
\begin{equation*}
\begin{split}
\Phi(f_0, \cdots, f_n):=&\int_{\mathbb{R}^n}\cdots \int_{\mathbb{R}^n}d x_1\cdots dx_n \\
&\qquad\qquad C(x_1, \cdots, x_n)f_0(-x_1-\cdots-x_n)f_1(x_1)\cdots f_n(x_n).
\end{split}
\end{equation*}
It is by direct computation that one can show that up to a constant $\Phi$ is the Fourier transform of the cyclic cocycle $\Psi$ on $\mathcal{S}(\widehat{\mathbb{R}}^n)$. 
\begin{proposition}\label{lem:rncocycle}
The cyclic cocycle $\Phi$ is the generator of the cyclic cohomology of $\mathcal{C}(\mathbb{R}^n)$. 
\end{proposition}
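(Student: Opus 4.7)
The plan is to reduce the statement to the known classification of the cyclic cohomology of the commutative Schwartz algebra via Fourier transform, and then verify by a direct computation that $\Phi$ represents the generating class.

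First, the Fourier transform $\mathcal{F}$ is a topological algebra isomorphism from $(\mathcal{C}(\mathbb{R}^n),\ast)$ onto $(\mathcal{S}(\widehat{\mathbb{R}}^n),\cdot)$, and therefore induces an isomorphism on (periodic) cyclic cohomology in every degree. By \cite[Theorem~46]{Connes:IHES}, recalled in the excerpt, $HP^n(\mathcal{S}(\widehat{\mathbb{R}}^n))$ is one-dimensional and generated by $\Psi(\hat f_0,\ldots,\hat f_n)=\int \hat f_0\,d\hat f_1\wedge\cdots\wedge d\hat f_n$. It therefore suffices to show that $\mathcal{F}^{\ast}\Psi = c\,\Phi$ for some nonzero scalar $c\in\mathbb{C}$.

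The heart of the argument is an explicit unpacking of $\mathcal{F}^{\ast}\Psi$. I would expand the wedge in coordinates as
\[
d\hat f_1\wedge\cdots\wedge d\hat f_n=\sum_{\sigma\in S_n}\operatorname{sgn}(\sigma)\prod_{i=1}^{n}\partial_{\xi^{\sigma(i)}}\hat f_i\,d\xi^1\wedge\cdots\wedge d\xi^n,
\]
then substitute $\partial_{\xi^j}\hat f_i=\mathcal{F}(-2\pi i\,x^j f_i)$, which turns $\prod_i \partial_{\xi^{\sigma(i)}}\hat f_i$ into $(-2\pi i)^n\,\mathcal{F}\bigl((x^{\sigma(1)}f_1)\ast\cdots\ast (x^{\sigma(n)}f_n)\bigr)$ via the convolution-to-product rule. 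Applying Plancherel's identity in the form $\int\hat f_0(\xi)\,\mathcal{F}(g)(\xi)\,d\xi=\int f_0(-y)\,g(y)\,dy$ and unfolding the $n$-fold convolution in variables $x_1,\ldots,x_n$ (with $y=x_1+\cdots+x_n$) then produces
\[
\mathcal{F}^{\ast}\Psi(f_0,\ldots,f_n)=(-2\pi i)^n\!\!\int_{(\mathbb{R}^n)^n}\!\!\Bigl(\sum_{\sigma\in S_n}\operatorname{sgn}(\sigma)\prod_{i=1}^{n}x_i^{\sigma(i)}\Bigr)f_0(-x_1-\cdots-x_n)\,f_1(x_1)\cdots f_n(x_n)\,dx_1\cdots dx_n.
\]
The alternating sum in brackets is precisely the determinant $C(x_1,\ldots,x_n)$ from \eqref{eq:defn-c}, so this gives $\mathcal{F}^{\ast}\Psi=(-2\pi i)^n\,\Phi$. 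In particular $\Phi$ generates $HP^n(\mathcal{C}(\mathbb{R}^n))$, and cyclicity of $\Phi$ is inherited for free from that of $\Psi$.

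The main obstacle is essentially bookkeeping: one must track the Fourier normalization convention, the signs contributed by the wedge expansion, and the parametrization of the iterated convolution so that the alternating sum assembles into $C(x_1,\ldots,x_n)$ with the correct sign and the overall factor $(-2\pi i)^n$ comes out cleanly. The underlying analytic input---continuity of $\mathcal{F}$ on Schwartz spaces, Plancherel, and Fubini---is routine on $\mathcal{C}(\mathbb{R}^n)$.
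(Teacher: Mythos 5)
Your proposal is correct and follows essentially the same approach as the paper, which simply asserts that ``by direct computation'' $\Phi$ is, up to a constant, the Fourier transform of $\Psi$ and invokes \cite[Theorem 46]{Connes:IHES} for the generator on the Schwartz side; you have filled in the direct computation with the correct bookkeeping (wedge expansion, convolution-to-product rule, Plancherel in the form $\int\hat f_0\,\mathcal{F}(g) = \int f_0(-y)g(y)\,dy$, and the determinant identity $\sum_\sigma\operatorname{sgn}(\sigma)\prod_i x_i^{\sigma(i)}=C(x_1,\ldots,x_n)$), arriving at $\mathcal{F}^*\Psi=(-2\pi i)^n\Phi$.
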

The generalization of $\Phi$ to general groups plays the central role in the following constructions. 
\subsection{Differentiable group cohomology} We observe that the function $C$ defined in Eq. (\ref{eq:defn-c}) is a group cocycle on $\mathbb{R}^n$. And the correspondence, $C\mapsto \Phi$, can be viewed as the analog of $H^\bullet(N_\gamma)$ in the cyclic cohomology of $\mathcal{C}(G)$ for $\gamma=e$ and $G=\mathbb{R}^n$ with $N_{e}=G$ . In \cite{Pflaum-Posthuma-Tang:vanEst, PP1}, this construction was generalized to general Lie group(oid)s.  In literature, two different but equivalent models have been used in the definition of differentiable group cohomology. We recall them below.

\begin{definition}\label{defn:group-coh-normalized}
Let $C^\infty(G^{\times k})$ be the space of smooth functions on $\underbrace{G\times \cdots \times G}_{k}$. Define the differential $\delta: C^\infty(G^{\times k})\to C^\infty(G^{\times k})$ by
\begin{eqnarray*}
&&\delta(\varphi)(g_1, \cdots, g_{k+1})\\
&=&\varphi(g_2, \cdots, g_{k+1})-\varphi(g_1g_2, \cdots, g_{k+1})\\
&+&\cdots+(-1)^k\varphi(g_1, \cdots, g_kg_{k+1})
+(-1)^{k+1}\varphi(g_1, \cdots, g_k).
\end{eqnarray*}
The differentiable group cohomology $H^\bullet_{\rm diff}(G)$ is defined to be the cohomology of $(C^\infty(G^{\times \bullet}), \delta)$, which is called the normalized differentiable group cohomology complex. 
\end{definition}

\begin{definition}\label{defn:groupcoh-homogeneous} 
\[
C^k_{\rm diff}(G):=\left\{\begin{array}{l}c:G^{\times(k+1)}\to\mathbb{C}~{\rm smooth},\\ ~c(gg_0,\ldots,gg_k)=c(g_0,\ldots,g_k),\ \forall g,g_0,\ldots,g_k\in G\end{array}\right\}.
\]
Define the differential $\partial:C^k_{\rm diff}(G)\to C^{k+1}_{\rm diff}(G)$  as
\[
(\partial c)(g_0,\ldots,g_{k+1}):=\sum_{i=0}^{k+1}(-1)^ic(g_1,\ldots,\hat{g}_i,\ldots,g_{k+1}).
\]
The differentiable group cohomology $H^\bullet_{\rm diff}(G)$ can also be computed by the cohomology of the chain complex $(C^\bullet_{\rm diff}(G), \partial)$, which is called the homogenous differentiable group cohomology complex. 
\end{definition}

In this paper, following the literature, we will use both chain complexes introduced in Definition \ref{defn:group-coh-normalized} and \ref{defn:groupcoh-homogeneous}. And we will explicitly point out the models used in the respective formulas below. 
 
 \smallskip
Fix a Haar measure $dg$ on $G$.  
\begin{definition}\label{defn:pairing}
Define a pairing between $C^\infty(G^{\times k})$ and $C^\infty_c(G)^{\hat{\otimes}(k+1)}$ by 
\begin{equation*}
\langle\hat{\varphi}, f_0\otimes\cdots\otimes f_k\rangle:=\int f_0(g_k^{-1}\cdots g_1^{-1})f_1(g_1)\cdots f_k(g_k)\varphi(g_1, \cdots, g_k)dg_1\cdots dg_k.
\end{equation*}
In the above formula, we have used the normalized differentiable cohomology complex in Definition \ref{defn:group-coh-normalized}.
\end{definition}

\begin{theorem} \label{thm:character} Assume that $G$ is unimodular, i.e. the Haar measure is bi-invariant. The above pairing descends to a character morphism $\chi$,
\[
\chi: H^\bullet_{\rm diff}(G)\to HP^\bullet(\mathcal{C}(G)).
\]
\end{theorem}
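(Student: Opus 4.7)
The strategy is to show that the formula in Definition \ref{defn:pairing} defines a chain map from the differentiable group cohomology complex to the Hochschild complex of $\mathcal{C}(G)$, whose image lies in the cyclic (hence periodic cyclic) cohomology, producing the required character $\chi$.

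First, for $\varphi \in C^\infty(G^{\times k})$ and $f_0,\ldots,f_k \in C_c^\infty(G)$, the integrand in the pairing has compact support in the variables $g_1,\ldots,g_k$, so $\hat{\varphi}$ is a well-defined continuous $(k+1)$-linear functional on $C_c^\infty(G)$. The key algebraic identity is then $b\hat{\varphi} = \pm\,\widehat{\delta\varphi}$, where $b$ is the Hochschild coboundary of the convolution algebra. Expanding $b\hat{\varphi}$ term by term, each interior summand $\hat{\varphi}(f_0,\ldots,f_i \ast f_{i+1},\ldots,f_{k+1})$ produces an integral that, after a routine change of variables, matches the $i$-th face of $\widehat{\delta\varphi}$. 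The wrap-around term $(-1)^{k+1}\hat{\varphi}(f_{k+1}\ast f_0, f_1,\ldots,f_k)$ crucially requires unimodularity: substituting $(f_{k+1}\ast f_0)(g_k^{-1}\cdots g_1^{-1}) = \int f_{k+1}(u) f_0(u^{-1} g_k^{-1}\cdots g_1^{-1})\,du$ and performing a bi-invariant change of variables (renaming $u$ as a new integration variable $g_{k+1}$) yields the last term of $\widehat{\delta\varphi}$.

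To promote $\hat{\varphi}$ to a cyclic cochain when $\varphi$ is a cocycle, I would pass to the equivalent homogeneous model of Definition \ref{defn:groupcoh-homogeneous}, where every cohomology class admits a totally antisymmetric, $G$-invariant representative $c(g_0,\ldots,g_k)$. Using $G$-invariance, the pairing may be rewritten as an integration over the constraint surface $\{g_0 g_1\cdots g_k = e\}$ with weight $c(g_0, g_0 g_1,\ldots, g_0 g_1\cdots g_k)$ and density $f_0(g_0)f_1(g_1)\cdots f_k(g_k)$. Cyclic rotation $(g_0,\ldots,g_k)\mapsto(g_k,g_0,\ldots,g_{k-1})$ preserves this constraint, while antisymmetry of $c$ contributes a sign $(-1)^k$, yielding the identity $\hat{\varphi}(f_k,f_0,\ldots,f_{k-1}) = (-1)^k \hat{\varphi}(f_0,\ldots,f_k)$. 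Hence $\hat{\varphi}$ defines a class in $HC^\bullet_\lambda(\mathcal{C}(G))$ and, via the natural map, in $HP^\bullet(\mathcal{C}(G))$.

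The principal obstacle is the continuous extension of $\hat{\varphi}$ from $C_c^\infty(G)^{\otimes(k+1)}$ to $\mathcal{C}(G)^{\hat\otimes(k+1)}$ with respect to the Harish-Chandra seminorms $\nu_{V,W,m}$ of Definition \ref{defn:HC}. The Schwartz decay of each $f_i$ provides a factor bounded by $\Theta(g_i)(1+L(g_i))^{-m}$, and one needs an estimate of the form $|\hat{\varphi}(f_0,\ldots,f_k)| \leq C\prod_i \nu_{V_i,W_i,m_i}(f_i)$. This requires the homogeneous representative $c$ to have at most polynomial growth in the length function $L$, which is not automatic for arbitrary differentiable cocycles but is established for the natural representatives arising from Lie algebra cocycles via a van Est-type map; this is carried out in \cite{Pflaum-Posthuma-Tang:vanEst, PP1}. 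Combining the chain-map identity, the cyclicity, and this continuity estimate yields the well-defined character morphism $\chi: H^\bullet_{\rm diff}(G)\to HP^\bullet(\mathcal{C}(G))$.
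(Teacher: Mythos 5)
Your sketch takes the same route as the references the paper points to in the remark right after the statement; the paper itself gives no independent argument for Theorem \ref{thm:character}. The chain-map identity $b\hat{\varphi}=\pm\widehat{\delta\varphi}$ (with unimodularity handling the wrap-around term) and the observation that the van Est representatives --- built by integrating an invariant form over geodesic simplices in $G/K$ --- are totally antisymmetric, hence give genuinely cyclic cochains rather than merely $(b,B)$-cocycles, match what is done in \cite{Pflaum-Posthuma-Tang:vanEst} and \cite{PP1}.

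One caution about the last step: the continuity estimate is not merely a matter of polynomial growth of the homogeneous cocycle $c$ (which indeed comes from the nonpositive curvature of $G/K$). After inserting the Schwartz bounds $|f_i(g)|\lesssim \Theta(g)(1+L(g))^{-m}$, you are still left with an iterated integral of the form
\[
\int_{G^{k}}\Theta(g_1)\cdots\Theta(g_k)\,\Theta\big((g_1\cdots g_k)^{-1}\big)\,(1+L(g_1))^{N-m}\cdots\,dg_1\cdots dg_k ,
\]
and $\Theta$ is only in $L^{2+\epsilon}(G)$, not in $L^1$. Making this converge and yield a bound by $\prod_i\nu_{V_i,W_i,m_i}(f_i)$ requires the Harish-Chandra/Herz majoration estimates for the spherical function, which is precisely the technical content of \cite[Appendix A]{st} that the paper's remark invokes for the passage from $H^\infty_L(G)$ to $\mathcal{C}(G)$. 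So polynomial growth of $c$ is necessary but not sufficient, and your phrasing slightly understates the analytical input at this step --- though you did correctly flag it as the principal obstacle and defer it to the right sources.
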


\begin{remark} In \cite{Pflaum-Posthuma-Tang:vanEst}, the character morphism was introduced as a map from $H^\bullet_{\rm diff}(G)$ to $HP^\bullet(C^\infty_c(G))$, where $C^\infty_c(G)$ is the convolution algebra of compactly supported functions. When $G$ has property $RD$ and the homogeneous space $G/K$ has nonpositive sectional curvature, Piazza and Posthuma   \cite{PP1}  improved this character morphism as a map from $H^\bullet_{\rm diff}(G)$ to the cyclic cohomology of $H^\infty_L(G)$, which for a length function $L$ on $G$ is defined as follows,  
\[
H^\infty_L(G)=\left\{f\in L^2(G)|\int_G \big(1+L(g)\big)^{2k} |f(g)|^2dg <\infty,\ \forall k\in \mathbb{N}\right\}. 
\]
For a connected reductive linear Lie group $G$,  $G/K$ is a Hermitian symmetric space of noncompact type \cite{Helg}, i.e. $G/K$ has nonpositive sectional curvature. We observe that the same argument for $H^\infty_L(G)$ can be applied to the Harish-Chandra Schwarz algebra $\mathcal{C}(G)$ via the techniques in \cite[Appendix A.]{st}.  This is how we reached the final statement for Theorem \ref{thm:character}. 
\end{remark}

\begin{example} \label{ex:sl2} For $G=SL_2(\mathbb{R})$,  by the van Est isomorphism, $H^2_{\rm diff}(SL_2(\mathbb{R}))$ is generated by the area cocycle,  which has the following geometric description. Let $\mathbb{H}$ be the upper half space identified with the homogenous space $SL_2(\mathbb{R})/SO(2)$. Let $[e]$ be the point in $\mathbb{H}$ corresponding to the coset $e SO(2)$ in $SL_2(\mathbb{R})/SO(2)$. As $\mathbb{H}$ is equipped with a metric of  constant negative curvature, any two points in $\mathbb{H}$ are connected by a unique geodesic. Given $g_1, g_2, g_3\in SL_2(\mathbb{R})$, we consider $g_i[e]$, $i=1,2,3$, in $\mathbb{H}$, and the corresponding geodesic triangle  $\Delta(g_1[e], g_2[e], g_3[e])$ with vertices $g_1[e], g_2[e], g_3[e]$. Define a smooth function $A$ on $SL_2(\mathbb{R})\times SL_2(\mathbb{R})\times SL_2(\mathbb{R})$ as follows,
\[
A(g_0,g_1,g_2):={\rm Area}_{\mathbb{H}}(\Delta(g_0[e],g_1[e],g_2[e])),
\]
where $g_i\in SL(2,\RR)$ acts on $\mathbb{H}$ as usual by M\"obius transformations. It is straightforward to check that $A$ is a smooth 2-cocycle on $SL_2(\mathbb{R})$ (in the homogeneous differentiable group cohomology chain complex introduced in Definition \ref{defn:groupcoh-homogeneous}), c.f. \cite[Remark 2.20, (ii)]{PP1}, generalizing the volume cocycle  (\ref{eq:defn-c}) for $\mathbb{R}^n$ in Sec. \ref{subsection:Rn}.  Furthermore, its image $\chi(A)$ under the character morphism can be identified with the Connes-Chern character of the fundamental $K$-cycle of $SL_2(\mathbb{R})$ in \cite[I.9, Lemma 5]{Connes:IHES}. For $f_0, f_1, f_2\in \mathcal{C}(SL_2(\mathbb{R}))$, 
\[
\begin{split}
\chi(A)(f_0, f_1, f_2)=&\int_{SL_2(\mathbb{R})}\int_{SL_2(\mathbb{R})} f_0\big((g_1g_2)^{-1}\big) f_1(g_1) f_2(g_2)\\
 &\qquad \qquad \operatorname{Area}_{\mathbb{H}}\big( \Delta([e], g_1[e], g_1g_2[e])\big)dg_1 dg_2.
\end{split}
\] 
\end{example}

\subsection{Higher orbital integrals} The trace $\operatorname{tr}_\gamma$ introduced in Equation (\ref{eq:trace}) on $\mathbb{C}\Gamma$ has a natural generalization $\operatorname{tr}_g$ for $G$, i.e.
\begin{equation}\label{orbital-i}
\operatorname{tr}_g(f):=\int_{G/Z_g}f(hgh^{-1}) d(hZ_g). 
\end{equation}

For a discrete group $\Gamma$, it is not clear whether the trace $\operatorname{tr}_\gamma$  on $\mathbb{C}\Gamma$, for a general element $\gamma$, pairs with $K_0(C^*_r(\Gamma))$. Still, although a general statement is missing, the pairing is well-defined 
for groups of polynomial growths and, more importantly, for Gromov hyperbolic groups.
The latter example has been a challenge for some time and has been settled in the affirmative way by Puschnigg, who defined
a dense holomorphically closed subalgebra of  $C^*_r(\Gamma)$ to which the trace $\operatorname{tr}_\gamma$ extends.
See \cite{Puschnigg}. 
Higher cyclic cocycles associated to the other elements in the  Burghelea's decomposition were studied in detail
 by Chen-Wang-Xie-Yu in \cite{ChenWangXieYu}
and by Piazza-Schick-Zenobi in \cite{PSZ}; in the above articles (and references therein) many purely geometric applications were also given.
Notice that there is an analogy between the Puschnigg algebra and the extendability of delocalized (higher) cyclic 
cocycles for discrete Gromov hyperbolic groups and the Harish-Chandra  Schwartz algebra and the extendability of delocalized
(higher) cyclic cocycles for connected real reductive Lie groups.

\smallskip
We go back to the orbital integral \eqref{orbital-i}. Hochs and Wang \cite{Hochs-Wang-HC}, building on 
results of Harish-Chandra \cite{Harish-Chandra-dis},
proved that  for a semisimple element\footnote{that is, the corresponding operator $\operatorname{Ad}_g: \mathfrak{g}\to \mathfrak{g}$ is diagonalizable.}
$g$ 
 the trace functional $\operatorname{tr}_g$ is well defined on  the Harish-Chandra Schwartz algebra $\mathcal{C}(G)$ and therefore pairs with $K_0(\mathcal{C}(G))\cong K_0(C^*_r(G))$.  Hochs and Wang \cite{Hochs-Wang-HC} computed the pairing between $\operatorname{tr}_g$ and the index element $\Ind_{C^*_r(G)}(D)$ for a $G$-equivariant Dirac operator on a proper cocompact $G$-manifold $X$; this explicit formula will be recalled in Theorem \ref{thm:HochsWang-HC} below. As we shall see in Theorem \ref{thm:HochsWang-HC}, when $G$ does not have a compact Cartan subgroup the trace $\operatorname{tr}_g$ is a trivial cyclic cohomology class.
 This raises a natural question to find a generalization of $\operatorname{tr}_g$ that pairs nontrivially with $K_0(C^*_r(G))$. The problem
 was raised and solved in \cite{st}, as we shall now explain.

\smallskip
Let $K<G$ be a maximal compact subgroup and let $P<G$, $P=MAN$, be a cuspidal parabolic subgroup of $G$. Using the Iwasawa decomposition $G=KMAN$ we write an element $g\in G$ as 
\[
g = \kappa (g)\mu(g) e^{H(g)}n \in KM A N = G. 
\]
We observe that the function $H(g)$ is well defined on $G$ though the components $\kappa(g)$ and $\mu(g)$ in the Iwasawa decomposition may not be unique. Let $\operatorname{dim}(A)=m$.  Choosing coordinates of the Lie algebra $\mathfrak{a}$ of $A$,  we define the function 
\[
H=(H_1, \dots, H_m): G\to \mathfrak{a}. 
\] 
Song and Tang defined in \cite{st} the following cyclic cocycle on  $\mathcal{C}(G)$, the Harish-Chandra Schwartz algebra of $G$.

\begin{definition}\label{defn:higherorbitalintegral}
For $f_0, ..., f_m \in \mathcal{C}(G)$ and a semi-simple element $g\in M$,  define $\Phi^{P}_{g}$ by the following integral, 
\begin{equation}
\label{eq:PhiPg}
\begin{aligned}
\Phi^{P}_{g}(f_0, f_1, &\dots, f_m) \\
:=&\int_{h \in M/Z_M(x)} \int_{K N} \int_{G^{\times m}} C\big( H(g_1...g_mk), H(g_2...g_mk ),\cdots, H(g_mk)\big) \\
&\qquad f_0 \big (kh g h^{-1}nk^{-1} (g_1\dots g_m)^{-1}\big)f_1(g_1) \dots f_m(g_m)dg_1\cdots dg_m dk dn dh,  
\end{aligned}
\end{equation}
where $Z_M(x)$ is the centralizer of $x$ in $M$. 
\end{definition}

It is easy to check that the integral in Equation (\ref{eq:PhiPg}) is convergent for $f_0,..., f_m\in C^\infty_c(G)$. Song and Tang proved \cite[Theorem 3.5]{st} the following property for $\Phi^P_g$. 
\begin{theorem}\label{thm:higherorbital} For a cuspidal parabolic subgroup $P=MAN$ of $G$, and a semisimple element $g\in M$, the cochain $\Phi^P_g$ is a cyclic cocycle on $\mathcal{C}(G)$. 
\end{theorem}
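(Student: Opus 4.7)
The plan is to verify three things in the given order: absolute convergence of \eqref{eq:PhiPg} for $f_i \in \mathcal{C}(G)$, the cyclic symmetry $\Phi^P_g(f_m,f_0,\ldots,f_{m-1})=(-1)^m \Phi^P_g(f_0,\ldots,f_m)$, and the Hochschild cocycle identity $b\Phi^P_g = 0$. For convergence I would use that each coordinate $H_i$ of the Iwasawa projection $H:G\to\mathfrak{a}$ has linear growth in the Cartan length $L$, so $|C(H(g_1\cdots g_m k),\ldots,H(g_m k))|$ is dominated by a polynomial in $L(g_1),\ldots,L(g_m)$. Combining this polynomial bound with the Harish-Chandra seminorm estimates on each $f_i$ coming from Definition~\ref{defn:HC}, together with Harish-Chandra's bounds on $\int_{M/Z_M(g)} \Theta(hgh^{-1})\,dh$ that underlie the continuity of the orbital integral $\operatorname{tr}_g$ on $\mathcal{C}(M)$ from \cite{Hochs-Wang-HC}, one obtains absolute convergence; the $k\in K$ integration is harmless since $K$ is compact, and the $n\in N$ integration is absorbed by the exponential decay built into the $\Theta$-factor.

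For the algebraic identities, the strategy is to recognize $\Phi^P_g$ as the composition of three well-understood operations: a pullback through the Iwasawa projection of the volume cocycle $C$ of Proposition~\ref{lem:rncocycle} on $\mathfrak{a}\cong\RR^m$, the character pairing of Definition~\ref{defn:pairing} coming from differentiable group cohomology, and a parabolic averaging over the $M$-conjugacy orbit of $g$ (via the orbital integral $\operatorname{tr}_g$) together with an averaging over the remaining $KN$-directions. Concretely, I would set
\[
\varphi(g_1,\ldots,g_m) := C\bigl(H(g_1 g_2\cdots g_m),\,H(g_2\cdots g_m),\,\ldots,\,H(g_m)\bigr)
\]
as a smooth function on $G^{\times m}$, perform the change of variable $g_m \mapsto g_m k^{-1}$ in \eqref{eq:PhiPg}, and use the left $K$-invariance of Haar measure together with the classical Iwasawa identities to rewrite $\Phi^P_g(f_0,\ldots,f_m)$ as an $M$-orbital and $KN$-average of the character pairing $\langle\varphi,f_0\otimes\cdots\otimes f_m\rangle$ up to left translation of $f_0$ by the element $khgh^{-1}nk^{-1}$. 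Granted this rewriting, the cyclic symmetry and the Hochschild identity reduce to the statement that $\varphi$ is a normalized differentiable group cocycle, together with Theorem~\ref{thm:character} and the trace property of $\operatorname{tr}_g$ on $\mathcal{C}(M)$.

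The main obstacle will be establishing that $\varphi$ is indeed a group cocycle: the Iwasawa function $H$ is additive only on $AN$, not on $G$, so a naive coboundary computation for $\delta\varphi$ produces discrepancy terms of the form $H(g_1\cdots g_j k')-H(g_1\cdots g_j)$ depending on the auxiliary $K$-components appearing in the iterated Iwasawa decomposition. The resolution is that these discrepancies enter the determinant $C$ multilinearly and, because $C$ is alternating in its arguments, their contribution vanishes after averaging over $K$ and over the $M$-conjugacy orbit of $g$. Once this cancellation is isolated — the technical heart of \cite{st}, which parallels the delocalized constructions in the discrete hyperbolic-group setting of \cite{ChenWangXieYu,PSZ} — both the cyclic symmetry of degree $m$ and the Hochschild identity $b\Phi^P_g = 0$ follow formally from the resulting effective cocycle identity for $\varphi$.
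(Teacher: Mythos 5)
The paper does not reproduce a proof of this theorem: it cites Song--Tang \cite[Theorem 3.5]{st}. So the comparison below is between your proposal and the argument in \cite{st}.

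Your three-step plan (convergence, cyclic symmetry, Hochschild identity) is the right skeleton, and the convergence sketch is in the right spirit: one does need polynomial control on the Iwasawa projection $H$ in terms of the Cartan length $L$, compactness of $K$, and the Harish-Chandra type estimates underlying convergence of orbital integrals on $\mathcal{C}(M)$ and of the $N$-integral (this last being the standard integrability statement behind Harish-Chandra's parabolic descent, not merely ``decay of $\Theta$''). This part is plausible though not yet a proof.

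The algebraic part has a genuine gap. You define $\varphi(g_1,\ldots,g_m) = C\bigl(H(g_1\cdots g_m),\ldots,H(g_m)\bigr)$ and hope to reduce to Theorem~\ref{thm:character}, then concede that $\varphi$ is \emph{not} a group cocycle because $H$ is not additive, and assert that the error terms ``vanish after averaging over $K$ and over the $M$-conjugacy orbit.'' That last assertion is not an argument and is false as stated: already in rank one ($m=1$), $\delta\varphi(g_1,g_2)=H(g_1)-H(g_1\kappa(g_2))$, and integrating this against $f_0,f_1,f_2$ together with a $K$-average does not produce a cancellation for general Schwartz inputs. What actually makes the construction work is not an averaging phenomenon but the Iwasawa \emph{cocycle identity}
\[
H(gg') = H\bigl(g\,\kappa(g')\bigr) + H(g'),
\]
which says that $H$ is a $1$-cocycle for the $G$-action on $K/(K\cap M)$ rather than a homomorphism $G\to\mathfrak{a}$. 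The $k\in K$ in the arguments of $C$ in \eqref{eq:PhiPg} is precisely the orbit parameter for this action, and the alternating multilinearity of $C$ is used via row reduction
\[
C\bigl(H(g_1\cdots g_m k),\ldots,H(g_m k)\bigr)
= C\bigl(H(g_1\kappa(g_2\cdots g_m k)),\ldots,H(g_{m-1}\kappa(g_m k)), H(g_m k)\bigr),
\]
not merely as a vehicle for cancellation after integration. Both the cyclic relation and $b\Phi^P_g=0$ are then established by a direct computation in which this twisted cocycle structure, carefully chosen changes of variables, and the trace property of $\operatorname{tr}_g$ on $M$ are combined; there is no factorization through the character morphism $\chi$, which applies only to genuine group cocycles. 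There is also a smaller technical slip: your substitution $g_m\mapsto g_m k^{-1}$ removes $k$ from the $C$-arguments but leaves $f_m(g_m k^{-1})$, so the claimed rewrite as a character pairing ``up to a left translation of $f_0$'' does not hold; the $k$-dependence sits on both $f_0$ and $f_m$.
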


\subsection{From cocycles on  $\mathcal{C}(G)$ to cocycles on $\mathcal{A}_G^\infty (X,E)$}

In Section \ref{subsec:Harish-Chandra}, we introduced $\mathcal{A}^{\infty}_G(X, E)$, a dense subalgebra of the Roe algebra $C^*_G(X, E)$ that has the same $K$-theory groups as $\mathcal{C}(G)$.  In this subsection, we explain how to lift the cocycles on $\mathcal{C}(G)$ to $\mathcal{A}_G^{\infty}(X, E)$. 

Recall, see \eqref{descrption-smoothings-HC}, that an element in $\mathcal{A}_G^\infty(X, E)$ is a $K$ bi-invariant smooth function on $G$ with values in $\Psi^{-\infty}(S)$.  The partial trace map 
\[
\Tr_S: \mathcal{A}^\infty_G(X, E)\to \mathcal{C}(G),\ %\Tr_s(A)(g):=\int_S \tr\big(A(g)(s,s)\big) ds. 
\Tr_S (A)(g):=\Tr (A(g))=\int_S \tr\big(A(g)(s,s)\big) ds. 
\]
Generalizing this partial trace map, we define the map $$\tau: C^k(\mathcal{C}(G))\to C^k(\mathcal{A}_G^\infty(X, E))$$ as follows,
\[
\tau_\varphi (A_0, \cdots, A_k):=\varphi\big(\Tr\big(A_0(g_0)\circ A_1(g_1)\circ \cdots \circ A_k(g_k) \big)\big).
%=\int_{G^{\times k}} \Tr\big(A_0\big((g_1\cdots g_k)^{-1}\big)\circ A_1(g_1)\circ \cdots \circ A_k(g_k) \big)\varphi(g_1, g_1g_2, \cdots, g_1\cdots g_k){\rm d}g_1\cdots {\rm d}g_k.
\]
It is easy to check that $\tau$ induces a chain map on the Hochschild and cyclic complexes. Thus, see \cite{PP1},
we have:
\begin{proposition}\label{prop:morphism}
The chain map $\tau$ induces a morphism $\tau: HP^\bullet (\mathcal{C}(G))\to HP^\bullet(\mathcal{A}_G^\infty(X, E))$. 
\end{proposition}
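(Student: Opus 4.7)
The plan is to reduce the statement to the verification that $\tau$ is a morphism of Hochschild complexes that commutes with both $b$ and with Connes' operator $B$; once this is established it automatically descends to the $(b,B)$-bicomplex, hence to the periodic cyclic cohomology, since cyclic cochains are preserved by the same argument. The starting observation is that, via the isomorphism \eqref{descrption-smoothings-HC}, an element of $\mathcal{A}^\infty_G(X,E)$ is essentially a $K\times K$-invariant function $G\to \Psi^{-\infty}(S,E|_S)$, and the algebra product combines convolution on $G$ with operator composition on $\Psi^{-\infty}$. It is therefore convenient to write $\mathcal{A}^\infty_G(X,E)=\big(\mathcal{C}(G)\,\hat{\otimes}\,R\big)^{K\times K}$ with $R:=\Psi^{-\infty}(S,E|_S)$, test the formulae on elementary tensors $A_i=f_i\otimes k_i$, and then invoke the continuity of all operations in the relevant Fréchet topologies.

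On such an elementary tensor one has $A_i(g)=f_i(g)\,k_i$ and, by the description of the product,
\[
\Tr\bigl(A_0(g_0)\circ A_1(g_1)\circ\cdots\circ A_k(g_k)\bigr)
=f_0(g_0)\cdots f_k(g_k)\cdot \Tr(k_0k_1\cdots k_k),
\]
so that, interpreting $\varphi\in C^k(\mathcal{C}(G))$ as a continuous functional on $\mathcal{C}(G)^{\hat{\otimes}(k+1)}$,
\[
\tau_\varphi(A_0,\dots ,A_k)=\Tr(k_0\cdots k_k)\,\varphi(f_0,\dots ,f_k).
\]
First I would check that this expression is meaningful for general (not simple-tensor) inputs: the operator composition $k_0\circ\cdots\circ k_k$ remains in the trace-class ideal of $R$, the convolution of the Schwartz functions stays in $\mathcal{C}(G)$, the seminorm estimates in \eqref{descrption-smoothings-HC} combined with the Harish-Chandra estimates of Definition \ref{defn:HC} imply joint continuity, and the $K\times K$-invariance descends to the integral because $\Tr_S$ is invariant under the diagonal $K$-action. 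This establishes that $\tau$ is a well-defined continuous chain map at the level of cochains.

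Next I would verify the intertwining $\tau\circ b=b\circ\tau$. Because $(f_0\otimes k_0)(f_1\otimes k_1)=(f_0*f_1)\otimes(k_0k_1)$ the first $k$ terms of $b(\tau_\varphi)(A_0,\dots ,A_{k+1})$ match, term by term, $\tau_{b\varphi}(A_0,\dots ,A_{k+1})$, with the common prefactor $\Tr(k_0\cdots k_{k+1})$. The last term requires the identity $\Tr(k_{k+1}k_0k_1\cdots k_k)=\Tr(k_0k_1\cdots k_{k+1})$, which is precisely the cyclic property of the functional-analytic trace on $R$; this is the one genuine input, and it makes the computation close. An entirely analogous bookkeeping, using that $1\in R^{+}$ composes trivially on the $R$-side, shows $\tau B=B\tau$ once one works in the unitised algebra. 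Both computations go through on cyclic cochains because the sign $(-1)^k$ in Definition \ref{defn:cyclic-coh} is reproduced by the cyclic rearrangement of the $k_i$'s under $\Tr$.

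The main technical obstacle, in my estimation, is not the algebraic calculation but the analytic part of Step one: producing uniform continuity estimates showing that $\tau_\varphi$ extends continuously to the projective tensor product of the Fréchet algebras, and that the integrals implicit in the convolution $f_0*f_1*\cdots*f_k$ on $G$ converge with values tame enough to be paired with the cochain $\varphi$. Here one uses the Harish-Chandra seminorms $\nu_{V,W,m}$ in exactly the way they were designed for, together with the fact that $\Tr$ is continuous for the trace-norm on $R$ and dominated by the Fréchet seminorms $\|\cdot\|_\alpha$ of $\Psi^{-\infty}(S,E|_S)$. Once this extension is in place, the two chain identities above promote $\tau$ to a morphism of $(b,B)$-bicomplexes, and passing to the direct limit over the periodicity operator $S$ yields the desired morphism $\tau\colon HP^\bullet(\mathcal{C}(G))\to HP^\bullet(\mathcal{A}^\infty_G(X,E))$.
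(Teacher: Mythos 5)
Your proposal is correct and matches the approach the paper (via \cite{PP1}) intends: identify $\mathcal{A}^\infty_G(X,E)$ with the invariant part of $\mathcal{C}(G)\hat{\otimes}\,\Psi^{-\infty}(S,E|_S)$, recognise $\tau_\varphi$ as the cup product of $\varphi$ with the trace on the smoothing factor, and verify $b$- and $B$-equivariance (and preservation of cyclicity) from the trace identity $\Tr(k_{k+1}k_0\cdots k_k)=\Tr(k_0\cdots k_{k+1})$, with continuity supplied by the Harish-Chandra seminorms and the trace-norm estimates. One small point worth phrasing more carefully: elementary tensors $f\otimes k$ live in $\mathcal{C}(G)\hat{\otimes}\Psi^{-\infty}(S,E|_S)$ and are generally not $K\times K$-invariant, so it is cleaner to first check that $\tau_\varphi$ is a well-defined continuous chain map on the full projective tensor product (where elementary tensors are dense), and then observe that the formula, being defined through the pairing of $\varphi$ with the function $(g_0,\dots,g_k)\mapsto\Tr(A_0(g_0)\circ\cdots\circ A_k(g_k))$, restricts to the $K\times K$-invariant subalgebra; the diagonal $K$-invariance of the partial trace $\Tr_S$ is what makes this restriction sensible. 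With that rephrasing, the argument is complete.
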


 We shall  work with the following cyclic cocycles on $\mathcal{A}^\infty_G(X, E)$:
 \begin{itemize}
 \item for a differentiable group cocycle $\varphi$ of $G$, we shall consider  $\tau\big(\chi(\varphi))$, denoted in the sequel by
 $\tau^X_\varphi$;
 \item for $g\in G$, we shall consider $\tau(\tr_g)$, denoted by  $\tr^X_g$ in the sequel;
 \item for $g\in G$, we shall consider  $\tau\big(\Phi^P_{g}\big)$, denoted by $\Phi^P_{X,g}$ in the sequel.
 \end{itemize} 
 
\section{Higher indices for proper cocompact manifolds without boundaries}
\label{sec:higher indices}

In this section, we present higher index theorems on proper cocompact $G$-manifolds associated to the cyclic cocycles on $\mathcal{C}(G)$ introduced in Section \ref{sec:cyclic cocycles}. 

\subsection{$L^2$-index theorem}

We start by reviewing  Atiyah's  $L^2$-index theorem on Galois coverings. Let $M$ be a compact smooth manifold without boundary, and $E$ a twisted spinor bundle on $M$, and $D$ 
an odd $\ZZ_2$-graded Dirac operator acting on the sections of $E$. 

Let $\Gamma$ be the fundamental group of $M$ and $X$ be the universal covering space of $M$. $X$ is equipped with a proper, free, and cocompact $\Gamma$ action such that the quotient is $M$.  As $X$ is a covering space of $M$, the operator $D$, as a differential operator on $M$, lifts to an odd $\ZZ_2$-graded $\Gamma$-equivariant operator $\widetilde{D}$ on $\widetilde{E}$, the pullback of $E$ to $X$.  

Let $C^*_r(\Gamma)$ be the reduced group $C^*$-algebra of $\Gamma$. Following Equation (\ref{CM=index-bis}), we consider the index $\Ind_{C^*_r(\Gamma)}(\widetilde{D})$ of the operator $\widetilde{D}$, which is an element of $K_0(C^*_r(\Gamma))$. 

The trace $\operatorname{tr}_e$ on $\mathbb{C}\Gamma$ naturally extends naturally to a trace on $C^*_r(\Gamma)$. The pairing between $\operatorname{tr}_e$ and the index element $\operatorname{Ind}_{C^*_r(\Gamma)}(D)$ was computed by Atiyah \cite{Atiyah:L2} in the following theorem.
\begin{theorem}\label{thm:atiyah} 
\[
\langle \operatorname{tr}_e, \operatorname{Ind}_{C^*_r(\Gamma)}(\widetilde{D})\rangle=
\operatorname{ind}(D). 
%\operatorname{Ind}_{\mathbb{C}}(D). 
%=\blue{\int_M {\rm AS}(M)}. 
\]
where on the right hand side we have the Fredholm index of $D$.
\end{theorem}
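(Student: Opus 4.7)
The strategy is to represent the index class by the Connes–Moscovici heat-kernel idempotent, pair it with $\operatorname{tr}_e$ (extended to a von Neumann / $\Gamma$-trace on $\Gamma$-invariant smoothing operators), and then run a McKean--Singer argument combined with the local heat-kernel expansion to identify the result with the Fredholm index of $D$.

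\textbf{Step 1: a smooth representative.} Applied to $\Gamma$ in place of $G$, the formalism of Section~\ref{sec:invariant elliptic} produces a dense holomorphically closed subalgebra $\mathcal{A}^\infty_\Gamma(\widetilde{M},\widetilde{E})\subset C^*_\Gamma(\widetilde{M},\widetilde{E})$ containing the heat operators $e^{-t\widetilde{D}^\mp \widetilde{D}^\pm}$. Proposition~\ref{prop:CM-rapid} and the isomorphism \eqref{CM=index-bis} imply
$$\operatorname{Ind}_{C^*_r(\Gamma)}(\widetilde{D})=\mathcal{M}\bigl([V_{\rm CM}(\widetilde{D})]-[e_1]\bigr)\in K_0(C^*_r(\Gamma)).$$
The trace $\operatorname{tr}_e$ on $C^*_r(\Gamma)$ corresponds, through $\mathcal{M}$, to the $\Gamma$-trace
$$\operatorname{Tr}_\Gamma(A)=\int_{\mathcal{F}}\operatorname{tr}\bigl(K_A(x,x)\bigr)\,dx,$$
with $\mathcal{F}\subset\widetilde{M}$ a fundamental domain for the free $\Gamma$-action and $K_A$ the Schwartz kernel of $A\in\mathcal{A}^\infty_\Gamma(\widetilde{M},\widetilde{E})$.

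\textbf{Step 2: pairing with the Connes--Moscovici idempotent.} The only contribution to the trace pairing comes from the diagonal entries of $V_{\rm CM}(\widetilde{D})-e_1$, which are the smoothing $\Gamma$-invariant operators $e^{-\widetilde{D}^-\widetilde{D}^+}$ and $-e^{-\widetilde{D}^+\widetilde{D}^-}$. Therefore
$$\bigl\langle \operatorname{tr}_e,\operatorname{Ind}_{C^*_r(\Gamma)}(\widetilde{D})\bigr\rangle=\operatorname{Tr}_\Gamma\bigl(e^{-\widetilde{D}^-\widetilde{D}^+}\bigr)-\operatorname{Tr}_\Gamma\bigl(e^{-\widetilde{D}^+\widetilde{D}^-}\bigr)=\operatorname{Str}_\Gamma\bigl(e^{-\widetilde{D}^2}\bigr).$$
The same manipulation applies to $e^{-t\widetilde{D}^2}$ for any $t>0$, since rescaling $\widetilde{D}\mapsto\sqrt{t}\widetilde{D}$ produces a homotopic idempotent.

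\textbf{Step 3: McKean--Singer and small-time limit.} A standard supercommutator computation shows that $\operatorname{Str}_\Gamma(e^{-t\widetilde{D}^2})$ is independent of $t>0$: the $\Gamma$-trace of the graded commutator $\bigl[\widetilde{D},\widetilde{D}e^{-t\widetilde{D}^2}\bigr]$ vanishes by $\Gamma$-invariance and the trace property, so $\tfrac{d}{dt}\operatorname{Str}_\Gamma(e^{-t\widetilde{D}^2})=0$. Letting $t\to 0^+$, the short-time diagonal asymptotic expansion of the heat kernel is a purely local invariant of $(\widetilde{M},\widetilde{E},\widetilde{D})$; since the Riemannian, Clifford and connection data on $\widetilde{M}$ are pulled back from $M$, the local index density on $\widetilde{M}$ descends to the usual Atiyah--Singer integrand on $M$. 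Thus
$$\operatorname{Str}_\Gamma\bigl(e^{-t\widetilde{D}^2}\bigr)\xrightarrow[t\to 0^+]{}\int_{\mathcal{F}}\hat{A}(\widetilde{M})\,\operatorname{ch}(\widetilde{E}/\mathcal{S})=\int_M\hat{A}(M)\,\operatorname{ch}(E/\mathcal{S})=\operatorname{ind}(D),$$
the last equality being the classical Atiyah--Singer theorem on the compact manifold $M$.

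\textbf{Main obstacle.} The delicate point is not the algebraic McKean--Singer identity but the analytic justification of the $t\to 0^+$ limit under $\operatorname{Tr}_\Gamma$: one must control the convergence of $\operatorname{Str}(e^{-t\widetilde{D}^2}(x,x))$ to the local index density uniformly on $\mathcal{F}$, and ensure that $e^{-t\widetilde{D}^2}\in\mathcal{A}^\infty_\Gamma(\widetilde{M},\widetilde{E})$ for all $t>0$ with trace norm uniform on compact $t$-intervals. This is guaranteed by the Gaussian off-diagonal decay of the heat kernel on the Galois cover, which has bounded geometry because $M$ is compact; this decay lets one pass the $t\to 0^+$ limit inside the fundamental-domain integral and identify the answer with $\operatorname{ind}(D)$.
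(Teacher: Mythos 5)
The paper does not contain a proof of Theorem \ref{thm:atiyah}: it is quoted as Atiyah's classical $L^2$-index theorem with the citation to \cite{Atiyah:L2} and used as background. So there is no paper proof to compare against, and I assess your argument on its own.

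Your argument is correct as a modern heat-kernel proof, and Steps 1 and 2 are exactly right (the Connes--Moscovici idempotent, the identification of $\operatorname{tr}_e$ with the von Neumann $\Gamma$-trace over a fundamental domain, and the reduction of the pairing to $\operatorname{Str}_\Gamma(e^{-\widetilde{D}^2})$). Two remarks on Step 3. First, in the $\Gamma$-McKean--Singer step, you present the time derivative as the $\Gamma$-supertrace of $[\widetilde{D},\widetilde{D}e^{-t\widetilde{D}^2}]$; since $\widetilde{D}$ is unbounded and certainly not $\Gamma$-trace class, the vanishing does not come from a naive cyclicity argument. One should split $e^{-t\widetilde{D}^2}=e^{-\frac{t}{2}\widetilde{D}^2}\,e^{-\frac{t}{2}\widetilde{D}^2}$ and rewrite the quantity as the supertrace of the anticommutator of two copies of the $\Gamma$-trace-class odd operator $\widetilde{D}e^{-\frac{t}{2}\widetilde{D}^2}$; then cyclicity legitimately applies. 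This is routine but should not be swept under ``standard supercommutator computation.'' Second, and more substantively: you invoke the full local Atiyah--Singer index theorem (Getzler rescaling) to compute $\lim_{t\to0^+}\operatorname{Str}_\Gamma(e^{-t\widetilde{D}^2})$, and then the Atiyah--Singer formula on $M$ to conclude. That is much heavier than what the statement requires and is not the route Atiyah took. The more economical argument never needs the index density. Use the periodization identity for the diagonal heat kernels under the covering projection $\pi\colon\widetilde{M}\to M$,
\[
K_{e^{-tD^2}}(\pi(x),\pi(x))=\sum_{\gamma\in\Gamma}K_{e^{-t\widetilde{D}^2}}(x,\gamma x),
\]
so that
\[
\operatorname{Str}(e^{-tD^2})-\operatorname{Str}_\Gamma(e^{-t\widetilde{D}^2})=\int_{\mathcal{F}}\sum_{\gamma\neq e}\operatorname{str}\bigl(K_{e^{-t\widetilde{D}^2}}(x,\gamma x)\bigr)\,dx .
\]
Because the $\Gamma$-action is free and properly discontinuous and $\mathcal{F}$ is relatively compact, $d(x,\gamma x)$ is uniformly bounded below for $x\in\mathcal{F}$, $\gamma\neq e$; Gaussian off-diagonal decay then forces the right-hand side to $0$ as $t\to0^+$. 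Combined with your $\Gamma$-McKean--Singer identity and the elementary McKean--Singer formula on the compact base, $\operatorname{Str}(e^{-tD^2})=\operatorname{ind}(D)$ (a spectral-decomposition argument, no local index theorem), this gives the theorem. What this version buys you is independence from the local Atiyah--Singer theorem: one only compares the two heat kernels near the diagonal, and the equality of the local index densities on $M$ and $\widetilde{M}$ comes out as a byproduct rather than an input.
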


Generalizing Atiyah's  Theorem \ref{thm:atiyah} to proper Lie group actions,  Connes and Moscovici \cite{Connes-Moscovici:L2} proved the following index 
formula on homogeneous spaces. Let $K$ be a maximal compact subgroup of $G$, and $X:=G/K$ be the associated homogeneous space, which is equipped with a proper left $G$ action with the quotient being a point. We assume that $X$ is equipped with a $G$-equivariant Spin$^c$-structure and consider the $G$-equivariant Dirac operator $D$ on $X$ obtained by twisting the Spin$^c$-Dirac operator with the bundle $E_\mu:=G\times V_\mu/K$, with $V_\mu$  an irreducible unitary $K$ representation associated with highest weight $\mu$. We obtain a well defined  index class for $D$, an  element $\Ind_{C^*_r(G)}(D)\in K_0(C^*_r(G))$. 

\begin{theorem} \label{thm:con-mos-L2}Assume that $G$ is unimodular. Let $\mathfrak{g}$ and $\mathfrak{k}$ be the Lie algebras of $G$ and $K$. 
\[
\langle \operatorname{tr}_e, \operatorname{Ind}(D_\mu)\rangle=\langle \widehat{A}(\mathfrak{g}, K)\wedge \operatorname{ch}(V_\mu)_{\mathfrak{p}^*}, [V]\rangle,
\]
where $\mathfrak{p}^*\subset \mathfrak{g}^*$ is the conormal space of $\mathfrak{k}$ in $\mathfrak{g}$, and $[V]$ is the fundamental class of $\mathfrak{p}^*$.
\end{theorem}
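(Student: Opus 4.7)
The plan is to represent the index by the Connes--Moscovici idempotent, lift $\operatorname{tr}_e$ to $\mathcal{A}^\infty_G(X, E_\mu)$ via the partial trace $\Tr_S$, recognize the resulting pairing as a super-trace of the heat kernel of $D_\mu^2$ at the basepoint $eK$, and apply Getzler's local index theorem to identify its $t\to 0^+$ limit with the characteristic class on the right-hand side.

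By equation~(\ref{CM=index-bis}) and Proposition~\ref{prop:CM-rapid}, the index class $\Ind(D_\mu)\in K_0(C^*_r(G))$ is represented by $[V_{{\rm CM}}(D_\mu)] - [e_1]$, and the matrix $V_{{\rm CM}}(D_\mu)-e_1$ has diagonal entries $e^{-D_\mu^-D_\mu^+}$ and $-e^{-D_\mu^+D_\mu^-}$ genuinely in $\mathcal{A}^\infty_G(X, E_\mu)$. By Proposition~\ref{prop:morphism}, the cyclic $0$-cocycle $\operatorname{tr}_e$ lifts to $\tau^X_{\operatorname{tr}_e}(A)=\Tr_S(A)(e)$. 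In the homogeneous case $X=G/K$, Abels' slice from Theorem~\ref{thm:slice} reduces to a single point carrying the fiber $V_\mu$, so $\Tr_S(A)(e)$ is simply the fiberwise trace of the Schwartz kernel of $A$ at the diagonal point $(eK,eK)$. Extending by the matrix trace on $M_2$, and using the standard homotopy that deforms the exponent $1$ in $V_{{\rm CM}}$ to any $t>0$, the pairing collapses to
\[
\langle \operatorname{tr}_e, \Ind(D_\mu)\rangle = \str\bigl(k_t(eK, eK)\bigr),\qquad t > 0,
\]
where $k_t$ is the heat kernel of $D_\mu^2$ and $\str$ denotes the fiberwise super-trace at $eK$.

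The intertwining identity $D_\mu^+ e^{-tD_\mu^-D_\mu^+}=e^{-tD_\mu^+D_\mu^-}D_\mu^+$ yields the pointwise McKean--Singer independence of the right-hand side on $t$. Passing to the limit $t\to 0^+$ and applying Getzler's rescaling, which is purely local and hence applies verbatim on the complete $G$-invariant Riemannian manifold $X = G/K$, one obtains
\[
\lim_{t \to 0^+}\str\bigl(k_t(eK, eK)\bigr) = \bigl[\widehat{A}(TX) \wedge \ch(E_\mu)\bigr]_{\mathrm{top}}(eK).
\]
Since $TX$ and $E_\mu$ are $G$-equivariant homogeneous bundles with $G$-invariant Clifford connections, the Chern--Weil representatives of $\widehat{A}(TX)$ and $\ch(E_\mu)$ are $G$-invariant differential forms on $X$, hence determined by their values at $eK$. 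Under the identification $T_{eK}X\cong\mathfrak{p}$, those values are precisely the Chevalley--Eilenberg cochains $\widehat{A}(\mathfrak{g}, K)$ and $\ch(V_\mu)_{\mathfrak{p}^*}$, and extracting the top-dimensional component is exactly the pairing with the fundamental class $[V]$ of $\mathfrak{p}^*$.

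The main technical obstacle is the justification of the reduction to the pointwise heat kernel formula in the non-compact $G$-equivariant setting. Concretely, one must verify that the $t$-dependent Connes--Moscovici projectors all remain in $M_2(\mathcal{A}^\infty_G(X,E_\mu))$ (a nontrivial consequence of the Harish-Chandra estimates underlying Proposition~\ref{prop:CM-rapid}), that the $t$-independence of $\str(k_t(eK,eK))$ holds with $\tau^X_{\operatorname{tr}_e}$ in place of the ordinary operator trace, and that the $t\to 0^+$ limit commutes with $\tau^X_{\operatorname{tr}_e}$. The unimodularity assumption on $G$ enters precisely here, guaranteeing the bi-invariance of Haar measure needed to make $\operatorname{tr}_e$ consistent with the cyclicity used in the McKean--Singer derivation.
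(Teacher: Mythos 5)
The paper states this theorem as a historical result, citing Connes--Moscovici's 1982 paper, and offers no proof of its own; the theorem later appears as a degenerate special case of the more general Theorem~\ref{thm:PPT-index}, specialized to $X=G/K$ and the trivial degree-$0$ group cocycle. There is therefore no in-paper argument to compare against directly. That said, the route you take --- pair the Connes--Moscovici idempotent with the lifted trace $\tau^X_{\operatorname{tr}_e}$, identify the pairing with the super-trace of the heat kernel, deform to small $t$, and invoke Getzler rescaling plus the Chern--Weil dictionary for homogeneous bundles --- is essentially the modern heat-kernel proof, the one the paper attributes to Hang Wang~\cite{Wang:L2} (for the general $G$-proper case) and, in refined form, to Piazza--Posthuma~\cite{PP2}. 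This is genuinely different from the original Connes--Moscovici argument, which predates Getzler rescaling and works instead via Kasparov's bivariant theory, explicit parametrices, and the Plancherel formula for $G$; your route is more conceptual and transparently localizes the answer, while the original route connects directly with tempered representation theory.

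Two small points of caution. First, the step where "$\Tr_S(A)(e)$ is simply the fiberwise trace of the Schwartz kernel at $(eK,eK)$" suppresses the cutoff: in the paper's own normalization the pairing with $\operatorname{tr}_e$ produces $\int_X c_X(x)\,\operatorname{str}k_t(x,x)\,dx$ (compare the $g=e$ case of Theorem~\ref{thm:HWang-L2}), which reduces to $\operatorname{str}k_t(eK,eK)$ only after observing that the integrand is a $G$-invariant density and that the cutoff integrates to $1$ against the chosen invariant measure on $G/K$; you should make that normalization explicit. Second, the intertwining identity $D^+e^{-tD^-D^+}=e^{-tD^+D^-}D^+$ by itself does not give \emph{pointwise} McKean--Singer independence of $\operatorname{str}k_t(x,x)$; the usual cancellation requires the trace property (cyclicity) of $\tau^X_{\operatorname{tr}_e}$, and the pointwise constancy then follows a posteriori from the $G$-invariance of the integrand. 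You do not actually need this remark, since you have already established $t$-independence of the pairing via the homotopy $V_{\mathrm{CM}}(tD)$, but as stated it is not a correct justification. With these clarifications the argument is sound and cleanly recovers the $(\mathfrak{g},K)$-cohomology formula from the top degree of the invariant $\widehat A\wedge\operatorname{ch}$ form at $eK$.
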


Hang Wang \cite{Wang:L2} generalized  the Connes-Moscovici theorem, Theorem \ref{thm:con-mos-L2}, to general proper cocompact $G$-manifolds as follows. 
\begin{theorem}\label{thm:HWang-L2} Let $X$ be a proper cocompact $G$-manifold which is equipped with a $G$-equivariant Spin$^c$-structure. Suppose that $D$ is a $G$-equivariant Dirac operator on $X$ associated to a twisted $G$-equivariant twisted spinor bundle $E=\mathcal{S}\otimes F$. The pairing between $\operatorname{tr}_e$ and $\Ind_{C^*_r(G)}(D)$ is computed as follows.
\[
\langle \operatorname{tr}_e, \Ind_{C^*_r(G)}(D) \rangle=\int_{X} c_{X} \operatorname{AS}(D),
\]
with $c_X$ a cut-off function for the proper action of $G$ on $X$ and 
\[
\operatorname{AS}(D):=\widehat{A}\Big(\frac{R_X}{2\pi i}\Big) \operatorname{tr}\Big( \exp\big(\frac{R_F}{2\pi i}\big)\Big).
\]
Here  $R_X$  is the curvature form for the Levi-Civita connection on $X$ and $R_F$ is the curvature form associated to a $G$-invariant Hermitian metric on $G$.
\end{theorem}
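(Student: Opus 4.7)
The strategy is classical in spirit: represent the index class by an element of the smooth subalgebra on which $\operatorname{tr}_e$ is actually defined, reduce the pairing to the supertrace of a heat kernel, and then apply the local index theorem pointwise on $X$.

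First I would unwind what $\operatorname{tr}_e$ becomes on $\mathcal{A}^\infty_G(X,E)$. The trace $\operatorname{tr}_e$ on $\mathcal{C}(G)$ is simply evaluation at the identity, $\operatorname{tr}_e(f)=f(e)$. Under the lift of Proposition \ref{prop:morphism} it becomes the $0$-cocycle
\[
\tau_{\operatorname{tr}_e}(A) \;=\; \Tr_S(A)(e) \;=\; \int_S \tr\bigl(A(e)(s,s)\bigr)\,ds
\]
on $\mathcal{A}^\infty_G(X,E)$. Unfolding this through Abels' isomorphism $X\cong G\times_K Z$ of Theorem \ref{thm:slice}, using the defining property $\int_G c_X(g^{-1}x)\,dg = 1$ of the cut-off function and the $G$-invariance $k_A(gx,gy) = k_A(x,y)$ of the Schwartz kernel of $A$, gives the equivalent expression
\[
\tau_{\operatorname{tr}_e}(A) \;=\; \int_X c_X(x)\,\tr\bigl(k_A(x,x)\bigr)\,dx.
\]

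Next, by Proposition \ref{prop:CM-rapid} and \eqref{CM=index-bis}, $\Ind_{C^*_G(X,E)}(D) = [V_{\mathrm{CM}}(D)] - [e_1]$ in $K_0(\mathcal{A}^\infty_G(X,E))$, so that
\[
\langle \operatorname{tr}_e, \Ind(D)\rangle \;=\; \tau_{\operatorname{tr}_e}\bigl(V_{\mathrm{CM}}(D)\bigr) - \tau_{\operatorname{tr}_e}(e_1).
\]
Expanding the $2\times 2$ matrix \eqref{cm-idempotent1} and carrying out the McKean--Singer cancellation between the $(1,1)$ and $(2,2)$ diagonal entries converts the ordinary fiberwise trace into a supertrace and reduces the right-hand side to
\[
\int_X c_X(x)\,\str\bigl(k_{e^{-tD^2}}(x,x)\bigr)\,dx,
\]
which is independent of the heat-time parameter $t>0$. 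Letting $t\to 0$ and applying Getzler rescaling / the local index theorem pointwise in $x$ one obtains $\str\bigl(k_{e^{-tD^2}}(x,x)\bigr)\to \operatorname{AS}(D)(x)$, and integrating against $c_X$ yields the claimed identity.

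The main obstacle is the analytic justification of these manipulations on the non-compact manifold $X$. The proper cocompactness of the $G$-action forces $X$ to have bounded geometry, which makes the short-time asymptotic expansion of the heat kernel locally uniform and supplies Gaussian off-diagonal estimates; combined with the compact support of $c_X$ this allows dominated convergence in the $t\to 0$ limit. A necessary prerequisite for the whole argument is Proposition \ref{prop:CM-rapid}, which guarantees that $V_{\mathrm{CM}}(D)$ actually lies in the holomorphically closed subalgebra $\mathcal{A}^\infty_G(X,E)$; without this one could not pair the index class with $\operatorname{tr}_e$, since the latter extends to $\mathcal{C}(G)$ but not to $C^*_r(G)$.
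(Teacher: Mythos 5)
Your sketch follows the standard heat-kernel route to this result, which is indeed how it is proved in the cited reference \cite{Wang:L2}, and is also the strategy used by Piazza--Posthuma in \cite{PP2} for the more general Theorem~\ref{thm:PPT-index} that specializes to this one when the group cocycle is trivial. The paper under review is a survey and states Theorem~\ref{thm:HWang-L2} without proof, so there is no internal proof to compare against; your outline --- lift $\operatorname{tr}_e$ via the partial trace $\Tr_S$ to the integral $\int_X c_X\,\tr\bigl(k_A(x,x)\bigr)\,dx$, represent the index class by the Connes--Moscovici idempotent in the smooth subalgebra (Proposition~\ref{prop:CM-rapid} is exactly what is needed for the pairing to make sense), perform the McKean--Singer cancellation, and take $t\downarrow 0$ using Getzler rescaling together with bounded geometry of $X$ and compact support of $c_X$ --- is the correct chain of ideas. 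The one place you elide a detail is the introduction of the heat-time parameter: $V_{\mathrm{CM}}(D)$ as written has no $t$, so one first replaces $D$ by $sD$, observes that the resulting family of idempotents stays inside $\mathcal{A}^\infty_G(X,E)$ and is homotopic, and then invokes homotopy invariance of the $K$-theory pairing to obtain $s$-independence before sending $s\downarrow 0$. This is routine but worth stating, since it is exactly the device that later becomes subtle in the boundary case treated in Section~\ref{sec:aps index}.
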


\subsection{Higher indices associated to differentiable group cohomology} In \cite{ConnesMoscovici}, Connes and Moscovici  established 
a higher version of  Atiyah's $L^2$-index Theorem \ref{thm:atiyah} and  computed a geometric formula  for the pairing between the group cohomology of the fundamental group $\Gamma$ and the index class $\Ind_{\mathcal{A}^c_{G}(X, E)}(D)\in K_\bullet(\mathcal{A}^c_G(X,E))$. Inspired by this result, Pflaum, Posthuma, and Tang \cite{Pflaum-Posthuma-Tang:vanEst, PPT}  proved 
a generalization of Theorem \ref{thm:HWang-L2}, computing the pairing between the cyclic cocycles in the image of the character morphism $\chi: H^\bullet_{\rm diff}(G)\to HP^\bullet(\mathcal{C}(G))$ in Theorem \ref{thm:character}
and the smooth index class $$\operatorname{Ind}_{\mathcal{C}(G)}(D)\in K_0 (\mathcal{C}(G))
 \equiv \operatorname{Ind}_{C^*_r(G)}(D)\in  K_0 (C^*(G)).$$ 
 
Let $X$ be a manifold equipped with proper $G$ action, and $\Omega^\bullet_{\text{inv}}(X)$ be the space of $G$-invariant differential forms on $X$. The de Rham differential restricts to $\Omega^\bullet_{\text{inv}}(X)$ and the associated cohomology is denoted by $H^\bullet_{\text{inv}}(X)$. 
\begin{definition}
The van Est morphism $\Phi: C^{k}_{\text{diff}}(G)\to \Omega^k_{\text{inv}}(X)$ is defined as follows. 
\[
\begin{split}
f_{\varphi}(x_0, \cdots, x_k)&:=\int_{G^{\times (k+1)}} c_X(g_0^{-1}x_0) \cdots c_X(g_k^{-1}x_k)\varphi(g_0, g_1, \cdots, g_k) {\rm d}g_0\cdots {\rm d}g_k,\\
\Phi(\varphi)&:=(d_{x_1}d_{x_2}\cdots d_{x_k} f_{\varphi})(x, \cdots, x).
\end{split}
\]
In the above formulas, we have used the homogeneous differentiable group cohomology chain complex in Definition \ref{defn:groupcoh-homogeneous}, and $f_\varphi$ is a smooth function on $X^{\times (k+1)}$, and $d_i f_{\varphi}$ is the differential of $f_{\varphi}$ along the $x_i$ component. As the cut-off function $c_X$ is compactly supported along each $G$ orbit, the above integral is convergent. 
\end{definition}
The following property is proved in \cite[Prop. 2.5]{PP1}.
\begin{proposition}\label{prop:vanest}
The map $\Phi$ induces a morphism $\Phi:H^\bullet_{\text{diff}}(G)\to H^\bullet_{\text{Inv}}(X)$.
\end{proposition}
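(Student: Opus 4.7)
The plan is to work entirely at the cochain level and verify two properties of $\Phi\colon C^k_{\rm diff}(G)\to\Omega^k(X)$: first, that its image lies in $\Omega^k_{\rm inv}(X)$, and second, that $d\circ\Phi = \Phi\circ\partial$. Together these ensure that $\Phi$ descends to a well-defined map on cohomology in the standard way.

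For $G$-invariance I would first establish the stronger fact that $f_\varphi$ itself is invariant under the diagonal action of $G$ on $X^{\times(k+1)}$. Given $h\in G$, the change of variables $g_j\mapsto hg_j$ in the defining integral for $f_\varphi(hx_0,\ldots,hx_k)$ uses left-invariance of the Haar measure to replace each factor $c_X(g_j^{-1}hx_j)$ by $c_X(g_j^{-1}x_j)$, while the homogeneity relation defining $C^k_{\rm diff}(G)$ replaces $\varphi(hg_0,\ldots,hg_k)$ by $\varphi(g_0,\ldots,g_k)$. Invariance of $f_\varphi$ then passes through the partial differentiations $d_{x_1}\cdots d_{x_k}$ and the diagonal restriction to give invariance of $\Phi(\varphi)$ on $X$.

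For the chain-map property the key input is the defining property $\int_G c_X(g^{-1}x)\,dg=1$ of the cut-off function. Expanding $\partial\varphi$ inside the integral for $f_{\partial\varphi}$ and integrating out the missing $g_i$ in each of the $k+2$ summands produces the telescoping identity
\[
f_{\partial\varphi}(x_0,\ldots,x_{k+1}) \;=\; \sum_{i=0}^{k+1}(-1)^i f_\varphi(x_0,\ldots,\widehat{x_i},\ldots,x_{k+1}).
\]
When the operator $d_{x_1}\cdots d_{x_{k+1}}$ is applied, every summand with $i\geq 1$ vanishes because $f_\varphi$ then has no dependence on $x_i$; only the $i=0$ term survives. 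On the other side, writing the total de Rham differential on $X^{\times(k+1)}$ as $d_{x_0}+d_{x_1}+\cdots+d_{x_k}$ and using the compatibility $d\circ\Delta^*=\Delta^*\circ d$ for the diagonal $\Delta\colon X\to X^{\times(k+1)}$, the nilpotency of each $d_{x_i}$ collapses $d\,\Phi(\varphi)$ to $\Delta^*(d_{x_0}d_{x_1}\cdots d_{x_k}f_\varphi)$. A straightforward re-indexing identifies this expression with the surviving $i=0$ contribution in $\Phi(\partial\varphi)$, with signs matching.

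The main subtlety is not analytic (the cut-off renders every integral compactly supported, and exchange of differentiation and integration is automatic) but combinatorial: a degree-$k$ homogeneous cochain carries $k+1$ arguments while a de Rham $k$-form carries $k$ differentials, so one must track carefully how the ``extra'' argument of the cochain model is absorbed into the de Rham side via the partial $d_{x_0}$ produced by the full differential, and confirm that the two re-indexings on either side agree up to sign.
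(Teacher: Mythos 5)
Your proof is correct. Since the paper itself defers the proof to \cite[Prop.\ 2.5]{PP1} and does not reproduce it, there is no displayed argument to compare against, but what you give is the natural (and, as far as I can tell, the same) cochain-level verification. The two points you isolate are exactly the right ones: diagonal $G$-invariance of $f_\varphi$ via the substitution $g_j\mapsto hg_j$, left-invariance of Haar measure and the homogeneity of $\varphi$, combined with $G$-equivariance of the diagonal embedding and compatibility of pullback with the partial differentials $d_{x_i}$; and the chain-map property via the telescoping identity obtained by integrating out each omitted variable against $\int_G c_X(g^{-1}x)\,dg=1$, after which only the $i=0$ term survives under $d_{x_1}\cdots d_{x_{k+1}}$ and re-indexing identifies it with $\Delta^*\bigl(d_{x_0}d_{x_1}\cdots d_{x_k}f_\varphi\bigr)=d\,\Phi(\varphi)$. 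Your combinatorial bookkeeping of degree versus argument count and of the vanishing $d_{x_i}^2$ terms is accurate, and the analytic points (compact support of the integrand, differentiation under the integral) are correctly dismissed as routine.
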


The following theorem computes the explicit formula about the pairing between $\chi(\varphi)$ and $\Ind_{C^*_r(G)}(D)$.
\begin{theorem}\label{thm:PPT-index} 
Suppose that $G$ is unimodular. Let $G$ be a connected reductive linear Lie group acting properly and cocompactly on a manifold $X$. For any $[\varphi]\in 
H^{2k}_{\rm diff}(G)$, the index pairing is 
given by
 \[
\langle \chi(\varphi), \operatorname{Ind}_{C^*_r(G)}(D)\rangle
=\frac{1}{(2\pi\sqrt{-1})^k(2k)!}\int_{T^*X} c_X\,\Phi([\varphi])\wedge \widehat{A}(T^*X)\wedge\ch(F),
\]
where 
$c_X\in C^\infty_{\rm c}(X)$ is a cut-off function, $\widehat{A}(T^*X):=\widehat{A}\Big(\frac{R_X}{2\pi i}\Big)$ and  $\ch(F):=\operatorname{tr}\Big( \exp\big(\frac{R_F}{2\pi i}\big)\Big)$, as in Theorem \ref{thm:HWang-L2} 
\end{theorem}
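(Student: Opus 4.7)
The plan is to evaluate the pairing by representing the index class via the Connes-Moscovici idempotent $V_{{\rm CM}}(D)$, which by Proposition \ref{prop:CM-rapid} lies in $M_{2\times 2}(\mathcal{A}_G^\infty(X,E))$ and represents the smooth index class by \eqref{CM=index-bis}. By Proposition \ref{prop:morphism}, the left-hand side equals $\langle \tau^X_\varphi, [V_{{\rm CM}}(D)] - [e_1]\rangle$, an explicit multiple integral over $G^{\times(2k+1)}$ of heat-kernel matrix entries weighted by the normalized cocycle $\varphi$ and the partial trace over the slice $S$. I would then introduce the rescaling $D \mapsto t^{-1/2}D$ for $t>0$; since $[V_{{\rm CM}}(t^{-1/2}D)]$ is independent of $t$ in $K_0(\mathcal{A}_G^\infty(X,E))$, so is the pairing, and we may pass to $t \to 0$, where the heat kernels appearing in $V_{{\rm CM}}$ concentrate near the diagonal of $X \times X$.

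Next, using $G$-equivariance together with the cut-off function $c_X$, I would rewrite the $G^{\times(2k+1)}$-integration as an integration over $X^{\times(2k+1)}$ against the smooth function $f_\varphi$ from the van Est construction. As $t \to 0$, Taylor expansion of $f_\varphi$ normal to the diagonal of $X^{\times(2k+1)}$ replaces it by its mixed partial derivatives, producing exactly $\Phi(\varphi) = (d_{x_1}\cdots d_{x_{2k}} f_\varphi)|_{x_0 = \cdots = x_{2k}}$. Simultaneously, Getzler rescaling of the heat kernel supertrace on the diagonal yields the Atiyah-Singer integrand $\widehat{A}(T^*X) \wedge \ch(F)$. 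Assembling the factors gives the stated formula, with the combinatorial factor $1/(2k)!$ from the standard pairing of a cyclic $2k$-cocycle with an idempotent and $(2\pi\sqrt{-1})^{-k}$ from Chern-Weil normalization.

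The main obstacle is the asymptotic analysis: interchanging the $t \to 0$ limit with the multiple integrations over $G^{\times(2k+1)}$ and controlling the Taylor remainders in the van Est expansion. This is handled using the rapid decay properties of the Harish-Chandra Schwartz algebra $\mathcal{C}(G)$ together with the Gaussian off-diagonal decay of the heat kernel of $D^2$; the unimodularity hypothesis on $G$ enters precisely here to ensure invariance of the relevant integrals under the change of variables converting $G$-integrals into $X$-integrals. Finiteness of the final integral despite non-compactness of $X$ is guaranteed by $c_X$, and independence from the particular choice of cut-off function reflects the fact that $\Phi(\varphi) \wedge \widehat{A}(T^*X) \wedge \ch(F)$ is closed and $G$-invariant, so only the $G$-averaged integrand on $X/G$ matters.
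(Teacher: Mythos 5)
This is a survey paper, so Theorem~\ref{thm:PPT-index} is stated without proof; the remark that follows it points to two published arguments. The original proof, by Pflaum--Posthuma--Tang \cite{PPT}, is an algebraic index theorem in the style of Fedosov and Nest--Tsygan: one passes to a deformation quantization of the $G$-invariant symplectic manifold $T^*X$ and extracts the geometric formula from a formal pairing in the cyclic homology of the quantized algebra, never touching a heat kernel. Your sketch instead reproduces the second proof, due to Piazza--Posthuma \cite{PP2}: pair $\tau^X_\varphi$ with the Connes--Moscovici idempotent, rescale $D\mapsto t^{-1/2}D$, let Getzler rescaling on the diagonal produce $\widehat{A}(T^*X)\wedge\ch(F)$, and use the van Est map to turn the $G$-averaged $f_\varphi$ into $\Phi(\varphi)$. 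So your outline is correct and matches the heat-kernel route the paper singles out for Dirac operators; what the algebraic route buys is uniformity over all $G$-invariant elliptic operators (indeed over Lie groupoids), while the heat-kernel route is tied to Dirac operators but is the one that extends to the boundary case in Section~\ref{sec:aps index}.

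Two technical points you should be aware of, though neither is a fatal gap in the sketch. First, the paper introduces the symmetrized projector $V(D)=V_{\rm CM}(D)\oplus V^*_{\rm CM}(D)$ in Definition~\ref{def:true-CM} precisely because the supertrace asymptotics in this computation are cleanest with a self-adjoint idempotent; working with $V_{\rm CM}(D)$ alone, as you propose, requires extra care (or one works with $V(D)$ and divides by $2$ via \eqref{twice}). Second, when you pair a degree-$2k$ cocycle with the idempotent, each of the $2k+1$ entries carries its own heat kernel, so the Taylor expansion of $f_\varphi$ normal to the diagonal of $X^{\times(2k+1)}$ has to be organized so that exactly the $2k$-fold mixed partial defining $\Phi(\varphi)$ survives the $t\to 0$ limit, with the powers of $t$ from the heat kernels balancing those from the Taylor remainders; this bookkeeping, combined with controlling the $G^{\times(2k+1)}$-integrals via rapid decay in $\mathcal{C}(G)$ and the Gaussian off-diagonal decay, is indeed where the real work of \cite{PP2} lies, as you correctly flag.
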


\begin{remark} Pflaum, Posthuma, and Tang \cite{PPT} proved Theorem \ref{thm:PPT-index} through the algebraic index theorem method developed by Fedosov \cite{Fedosov:book} and \cite{Nest-Tsygan:algebraicindex} for general $\mathsf{G}$-invariant elliptic operators on $X$ on a proper cocompact $\mathsf{G}$-manifold for a Lie groupoid $\mathsf{G}$.  Recently, Piazza and Posthuma \cite{PP2} presented a new proof of this theorem for the case of Dirac operators using the heat kernel and Getzler's rescaling techniques.   
\end{remark}

\begin{remark} The assumption of unimodularity in Theorem \ref{thm:PPT-index}  can be dropped by working with smooth group cohomology of $G$ with coefficients. This is developed in \cite{Pflaum-Posthuma-Tang:vanEst}.  
\end{remark}

\begin{example}We consider the case in which $G$ is $\mathbb{R}^2$ and $K$ is trivial. In this case, $X=G/K$ is identified with $\mathbb{R}^2$. Via the isomorphism with $\mathbb{C}$, $\mathbb{R}^2$ is equipped with the $\mathbb{R}^2$ invariant Dolbeault operator $D$.  As there is no $L^2$-harmonic form on $\mathbb{R}^2$, it was observed by Connes and Moscovici \cite{Connes-Moscovici:L2} that the $L^2$-index $\langle \operatorname{tr}_e, \Ind_{C^*_r(\mathbb{R}^2)}(D)\rangle $ vanishes. However, the determinant $C$ function in Section \ref{subsection:Rn} is a 2-cocycle on $\mathbb{R}^2$, and its image $\chi(C)$ in $HP^{\text even}(\mathcal{C}(\mathbb{R}^2))$ coincides with the cyclic cocycle $\Phi$ introduced in Section \ref{subsection:Rn}.  We can apply Theorem \ref{thm:PPT-index} to compute 
\[
\langle\Phi,   \Ind_{C^*_r(\mathbb{R}^2)}(D)\rangle=\langle\chi(C), \Ind_{C^*_r(\mathbb{R}^2)}(D)\rangle=\frac{1}{2}. 
\]
This example shows that higher indices contain interesting information of the operator $D$ beyond the $L^2$-index. The computation also extends naturally to $\mathbb{R}^{2n}=\mathbb{C}^n$. 
\end{example}

\subsection{Delocalized indices}

Hochs and Wang \cite{Hochs-Wang-HC} computed the pairing between $\operatorname{tr}_g$ and $\Ind_{C^*_r(G)}(D)$ for a twisted Spin$^c$ Dirac operator on a proper cocompact $G$-manifold on $X$. 
To introduce their result, we fix the following set up. 
\begin{itemize}
\item $X^g$ is the $g$-fixed point submanifold; 
\item   $\mathcal{N}_{X^g}$ is the normal bundle of $X^g$ in $X$ and $R^\mathcal{N}$ is the curvature form associated to the Hermitian connection on $\mathcal{N}_{X^g}\otimes \mathbb{C}$;
\item  $L_{\operatorname{det}}$ is  the determinant line bundle of the Spin$^c$-structure on $X$ and $L_{\operatorname{det}}|_{X^g}$ is its restriction to $X^g$ and $R^{L}$ is the curvature form associated to the Hermitian connection on $L_{\operatorname{det}}|_{X^g}$;
\item $R_{X^g}$ is the Riemannian curvature form associated to the Levi-Civita connection on the tangent bundle of  $X^g$;
\item The ${\rm AS}_g(D)$ has the following expression for a twisted Dirac operator on $E=\mathcal{S}\otimes F$:
\begin{equation}\label{eq:X-geom-form}
{\rm AS}_g(D):=\frac{\widehat{A}\big(\frac{R_{X^g}}{2\pi i}\big) \operatorname{tr}\big(g \exp(\frac{R^F}{2\pi i})\big) \exp(\operatorname{tr}(\frac{R^L}{2\pi i}))}
{\operatorname{det}\big(1-g \exp(-\frac{R^{\mathcal{N}}}{2\pi i})\big)^{\frac{1}{2}}}.
\end{equation}
This will also be denoted by ${\rm AS}_g (X,E)$ or, if there is no confusion on the vector bundle $E$, simply by ${\rm AS}_g (X)$.
\end{itemize}

Hochs and Wang proved the following result using heat kernel and Getzler's rescaling techniques \footnote{We refer also
to the recent article \cite{PPST} for a related, detailed discussion
of this result.}

\begin{theorem}\label{thm:HochsWang-HC}
\begin{enumerate}
\item If $G$ has a compact Cartan subgroup, then 
\[
\langle \operatorname{tr}_g, \Ind_{C^*_r(G)}(D)\rangle= \int_{X^g} c_{X^g} \operatorname{AS}_g(D),
\]
for a cutoff function $c_{X^g}\in C^\infty_c(X^g)$  for the $Z_g$-action on $X^g$, with $Z_g$ equal to  the centralizer subgroup of $g$ in $G$;
\item If $G$ does not have a compact Cartan subgroup, 
\[
\langle \operatorname{tr}_g, \Ind_{C^*_r(G)}(D)\rangle= 0. 
\]
\end{enumerate}
\end{theorem}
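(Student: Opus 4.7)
My plan is a heat-kernel / McKean-Singer argument, followed by Getzler rescaling on the fixed submanifold $X^g$. By Proposition~\ref{prop:CM-rapid} the index admits the smooth representative $[V_{{\rm CM}}(D)] - [e_1]\in K_0(\mathcal{A}_G^\infty(X,E))$, and Proposition~\ref{prop:morphism} lifts the orbital integral to the cyclic $0$-cocycle $\tr^X_g=\tau(\tr_g)$ on $\mathcal{A}_G^\infty(X,E)$. Because $\tr^X_g$ is a trace, the off-diagonal Connes-Moscovici entries contribute nothing and the pairing collapses to the $g$-twisted supertrace of the heat operator,
\[
\langle \tr_g,\Ind_{C^*_r(G)}(D)\rangle
= \tr^X_g(e^{-tD^-D^+}) - \tr^X_g(e^{-tD^+D^-}) =: \Str_g(e^{-tD^2}),
\]
independently of $t>0$ by the usual McKean-Singer identity applied to the trace $\tr^X_g$. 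The task therefore reduces to computing $\lim_{t\downarrow 0}\Str_g(e^{-tD^2})$.

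\textbf{Step 2 (Geometrization and localization on $X^g$).} Using the slice description \eqref{descrption-smoothings-HC} together with the definition \eqref{orbital-i} of $\tr_g$, the $g$-supertrace unfolds to
\[
\Str_g(e^{-tD^2}) = \int_{X/Z_g} c_{X/Z_g}(x)\, \str\!\bigl(g\cdot k_t(g^{-1}x, x)\bigr)\, dx,
\]
where $k_t$ is the Schwartz kernel of $e^{-tD^2}$ on $X$ and $c_{X/Z_g}$ is a cut-off built from $c_X$ and adapted to the proper cocompact $Z_g$-action on $X$ (using that $X^g$ is itself a proper cocompact $Z_g$-manifold, cf.~Theorem~\ref{thm:slice}). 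Standard Gaussian off-diagonal heat-kernel bounds, valid on the $G$-proper manifold $X$, show that the integrand is exponentially concentrated in a tubular neighborhood of $X^g$ as $t\downarrow 0$; away from $X^g$ the factor $k_t(g^{-1}x,x)$ vanishes to all orders.

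\textbf{Step 3 (Getzler rescaling; proof of (1)).} In a tubular neighborhood of $X^g$, apply Getzler's rescaling to $k_t$ in the normal directions. Since the argument is entirely local near $X^g$, it mirrors the compact equivariant Atiyah-Segal-Singer theorem: the normal directions produce the determinantal factor $\det\!\bigl(1-g e^{-R^{\N}/(2\pi i)}\bigr)^{1/2}$ from the action of $g$ on $\N_{X^g}$, the tangential directions contribute $\widehat A(R_{X^g}/(2\pi i))$, the twisting bundle yields $\tr(g e^{R^F/(2\pi i)})$, and the spin$^c$ factor $\exp(\tr(R^L/(2\pi i)))$ comes from the determinant line bundle. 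Reassembling these factors reproduces precisely the integrand $\operatorname{AS}_g(D)$ of \eqref{eq:X-geom-form}, and combining with the $Z_g$-cutoff gives $\int_{X^g} c_{X^g}\,\operatorname{AS}_g(D)$, proving item (1).

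\textbf{Step 4 (Vanishing in (2); main obstacle).} Item (2) is not a formal consequence of Step 3, since the AS integral on $X^g$ need not vanish a priori. Here the key input is representation-theoretic: when $G$ has no compact Cartan subgroup, Harish-Chandra's Plancherel theorem implies that the discrete series of $G$ is empty, and one can show that the functional $\tr_g$ on $K_0(C^*_r(G))$ factors through the (now trivial) discrete-series block of the tempered dual. Equivalently, the $g$-supertrace of the heat kernel at any $t>0$ may be expanded against the Plancherel measure as a sum of discrete-series characters at $g$, all of which are absent in this situation. The principal technical obstacle in the whole argument is twofold: analytically, justifying $\lim_{t\downarrow 0}$ under the noncompact integration over $X/Z_g$ requires uniform Gaussian estimates invariant under $G$-translation; conceptually, the vanishing in (2) genuinely relies on Harish-Chandra's theory rather than on local index computations, and reconciling it with the nonzero-looking AS integrand of Step~3 is the subtle point of the theorem.
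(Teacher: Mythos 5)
The paper states Theorem~\ref{thm:HochsWang-HC} as a cited result of Hochs--Wang~\cite{Hochs-Wang-HC} and does not reproduce a proof; the text only records that the proof uses ``heat kernel and Getzler's rescaling techniques,'' and your Steps~1--3 are a reasonable outline in that spirit. The passage from the cyclic pairing to the $g$-twisted heat supertrace via the Connes--Moscovici idempotent, the $t$-independence from McKean--Singer for the trace $\tr^X_g$, the Gaussian localization of the orbital integral near $X^g$, and the Getzler rescaling of the normal directions are all sound as a plan for part~(1). The analytic burdens you flag (uniformity of the off-diagonal estimates over $G/Z_g$; the trace property of $\tr^X_g$ against the unbounded $D$ in the McKean--Singer step) are indeed where the real work is, and the article itself notes the $D$ versus $D_{\rm split}$ subtlety (Proposition~\ref{prop:CM-rapid} and the accompanying footnote), which you implicitly bypass.

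The genuine gap is Step~4, and stating that ``Item~(2) is not a formal consequence of Step~3'' does not resolve it. If your Steps~1--3 are correct they yield, for \emph{every} connected real reductive $G$, the identity $\langle\tr_g,\Ind\rangle=\int_{X^g}c_{X^g}\mathrm{AS}_g(D)$; so part~(2) amounts precisely to showing that this fixed-point integral vanishes when $G$ has no compact Cartan subgroup, including for elliptic $g$ where $X^g\neq\emptyset$. You give no argument for this. A pure parity argument would not suffice: $\dim(G/K)\equiv\mathrm{rank}\,G-\mathrm{rank}\,K\pmod 2$, which can be even while $G$ still lacks a compact Cartan (e.g.\ $SL(5,\mathbb R)$), so $X^g$ can be even-dimensional and the integrand has a nontrivial top-degree part. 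Your appeal to a ``discrete-series block'' and the Plancherel decomposition gestures at the correct ingredient --- Harish-Chandra's theory of orbital integrals, and the equivalence of existence of discrete series with existence of a compact Cartan --- but you never connect the Getzler-rescaled local density to that representation-theoretic datum, so the heat-kernel computation and the claimed vanishing sit unreconciled, which is exactly the nontrivial content of part~(2). (As a minor point, ``no compact Cartan $\Rightarrow$ no discrete series'' is Harish-Chandra's discrete series existence criterion, not his Plancherel theorem.)
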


\subsection{Delocalized higher indices}
To improve Theorem \ref{thm:HochsWang-HC} to allow $G$ to be nonequal rank, in \cite{hst}, Hochs, Song, and Tang computed the pairing between the cocycle $\Phi^P_g$ and the index element $\Ind_{C^*_r(G)}(D)$.  Let $X/AN$ be the quotient of $X$ with respect to the $AN<G$ action. The group $M$ acts properly and cocompactly on the quotient $X/AN$; for $g$ a semisimple element of $G$, $(X/AN)^g$ is the fixed submanifold of the $g$ action on the quotient $X/AN$; $c^g_{X/AN}$ is a smooth compactly supported cut-off function on $(X/AN)^g$. 

Let $\mathfrak{m}$ be the Lie algebra of the group $M$, and $\mathfrak{a}$ the Lie algebra of the group $A$. Using the $K$-invariant metric on $\mathfrak{p}$, we obtain a $K\cap M$-invariant decomposition
\[
\mathfrak{p}=(\mathfrak{p}\cap \mathfrak{m})\oplus \mathfrak{a}\oplus (\mathfrak{k}/(\mathfrak{k}\cap \mathfrak{m})),
\]
which induces the $K\cap M$ decomposition of spinors,
\[
S_\mathfrak{p}\cong S_{\mathfrak{p}\cap \mathfrak{m}}\otimes S_{\mathfrak{a}}\otimes S_{\mathfrak{k}(\mathfrak{k}\cap \mathfrak{m})}. 
\]
Using the slice theorem, Theorem. \ref{thm:slice}, $X/AN$ can be identified  as
$X/AN=M\times_{K\cap M} S$.  
On $X/AN$, we consider the bundle $E_{X/AN}$, 
\[
E_{X/AN}=M\times_{M\cap K}\big(S_{\mathfrak{p}\cap \mathfrak{m}}\otimes S_{\mathfrak{k}/(\mathfrak{k}\cap \mathfrak{m})}\otimes W|_S\big)\cong M\times _{M\cap K} \big( S_{\mathfrak{p}\cap \mathfrak{m}}\otimes \widetilde{W}\big),
\]
where we denote $S_{\mathfrak{k}/(\mathfrak{k}\cap \mathfrak{m})}\otimes W|_S$ by $\widetilde{W}$ and $M\times_{M\cap K}\widetilde{W}$ by $W_{X/AN}$.  We observe that $W_{X/AN}$ is an $M$-equivariant Hermitian vector bundle on $X/AN$. On $E_{X/AN}$, we consider the associated Dirac operator $D_{X/AN}$. The index of $D_{X/AN}$ is an element  $\Ind_{C^*_r(M)}(D_{X/AN})$ in $K_\bullet(\mathcal{C}(M))$.
 
Hochs, Song, and Tang \cite{hst} proved a reduction theorem relating the index pairing for $D$ and the index pairing for $D_{X/AN}$. 
\begin{theorem}\label{thm:reduction}
\[
\langle \Phi^P_g, \Ind_{C^*_r(G)}(D)\rangle=\langle \tr^M_g, \Ind_{C^*_r(M)}(D_{X/AN})\rangle. 
\]
\end{theorem}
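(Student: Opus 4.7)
The plan is to reduce the higher orbital pairing on $G$ to an ordinary orbital pairing on $M$ by integrating along the $AN$ fibre of the submersion $X\to X/AN$. The first step is to represent both sides of the identity using the symmetrized Connes-Moscovici idempotent of Definition \ref{def:true-CM}. Writing $V_{{\rm CM}}(D)$ as a smooth $K$-bi-invariant map $G\to\Psi^{-\infty}(S,E|_S)$ via \eqref{descrption-smoothings-HC}, and similarly $V_{{\rm CM}}(D_{X/AN})$ as a $(K\cap M)$-bi-invariant map $M\to\Psi^{-\infty}(S,E_{X/AN}|_S)$, the two pairings become explicit multiple integrals; the left-hand side involves an $(m+1)$-fold integration over $G$ weighted by $C(H(g_1\cdots g_mk),\ldots,H(g_mk))$ from \eqref{eq:PhiPg}, whereas the right-hand side is a single integration over $M/Z_M(g)$.

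Next I would perform an Iwasawa change of variables on each copy of $G$, writing $g_i=k_i\mu_i\exp(H_i)n_i$ with $H_i\in\mathfrak{a}$. The determinant weight $C$ then becomes, up to lower-order corrections arising from the non-commutativity of the Iwasawa factors, the pull-back of a top-degree exterior form on $\mathfrak{a}^m$. This is the mechanism by which the ``higher'' cocycle on $G$ collapses to an ordinary trace on $M$: the cyclic pairing is about to be evaluated against a top form in the $A$-directions, so the $m$-fold Haar integration in $\mathfrak{a}$ produces a single volume factor rather than persisting as independent variables.

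The heart of the argument is the reduction of the CM kernel along the $AN$ fibres. Using the slice-compatible splitting \eqref{dec dirac}, together with a further refinement of $D_{G,K}$ along the $\mathfrak{a}\oplus\mathfrak{n}$ and $M$-directions, the heat operator $e^{-tD^2}$ factors, up to a Volterra remainder, as a product of a Gaussian-type heat kernel in the $AN$ variables and the heat kernel $e^{-tD_{X/AN}^2}$ on $X/AN$. Integrating the $AN$ factor against the top form on $\mathfrak{a}^m$ contributes exactly the volume factor needed to absorb the weight $C$; what remains is the pairing $\langle\tr^M_g,\Ind_{C^*_r(M)}(D_{X/AN})\rangle$, which yields the right-hand side.

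The main obstacle will be the precise identification of the $AN$-reduced CM idempotent with $V_{{\rm CM}}(D_{X/AN})$, because $D$ and $D_{\rm split}$ are not identical (see the remark after \eqref{dec dirac}) even though they define the same index class. One must argue that the pairing with $\Phi^P_{X,g}$ depends only on the $K$-theory class of the index, so that the Volterra remainder drops out; by Proposition \ref{prop:morphism} this is essentially built into the construction, provided one verifies that the $AN$-integration of the CM kernel lands in a holomorphically closed smooth subalgebra of $C^*_M(X/AN,E_{X/AN})$ on which $\tau(\tr_g)$ is continuous. Absolute convergence of the iterated integrals should follow from the rapid decay in Proposition \ref{prop:CM-rapid} together with the known polynomial growth of the Iwasawa function $H$ on $G$.
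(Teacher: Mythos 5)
The survey states Theorem \ref{thm:reduction} as a result of Hochs--Song--Tang \cite{hst} and does not reproduce the proof, so I evaluate your sketch on its own terms and against what that reference actually does.

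Your central step does not hold as written. You assert that after refining the splitting \eqref{dec dirac} along ``the $\mathfrak{a}\oplus\mathfrak{n}$ and $M$-directions,'' the heat operator $e^{-tD^2}$ factors, up to a Volterra remainder, into a Gaussian-type heat kernel in the $AN$ variables times $e^{-tD_{X/AN}^2}$. Two things go wrong here. First, the decomposition that the paper (and \cite{hst}) actually uses is
\[
\mathfrak{p}=(\mathfrak{p}\cap\mathfrak{m})\oplus\mathfrak{a}\oplus\bigl(\mathfrak{k}/(\mathfrak{k}\cap\mathfrak{m})\bigr),
\]
and the third summand is a quotient of $\mathfrak{k}$, \emph{not} $\mathfrak{n}$. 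The Clifford module $S_{\mathfrak p}$, hence the operator $D_{G,K}$, carries no direct summand ``$\mathfrak{a}\oplus\mathfrak{n}$'' on which to factor. Second, even along the directions that do split, the fibration $X\to X/AN$ is not a Riemannian product: $AN$ is a nonabelian solvable group, the induced metric on the fibre is not flat, and the cross-terms in $D^2$ between the $\mathfrak a$-, $\mathfrak m$-, and $\mathfrak{k}/(\mathfrak{k}\cap\mathfrak{m})$-directions are not controlled by a Volterra expansion. The subsequent claim that the $AN$ integration ``contributes exactly the volume factor needed to absorb the weight $C$'' inherits all of this; moreover the arguments $H(g_i\cdots g_m k)$ entering the determinant $C$ in \eqref{eq:PhiPg} depend cumulatively on all later $g_j$ and on $k$ through the Iwasawa cocycle, which you flag (``lower-order corrections'') but do not analyze — and for the collapse of the $m$-fold integral to a single trace this cocycle structure is precisely what has to be exploited, not discarded.

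Your last paragraph, using $K$-theory invariance of the pairing (Proposition \ref{prop:morphism}) to reconcile $D$ with $D_{\rm split}$, is sound in principle but can only repair the analysis if the heat-kernel decoupling is otherwise established, which it is not. The proof in \cite{hst} avoids this entirely: it evaluates $\Phi^P_g$ on the Connes--Moscovici idempotent directly, performs a change of variables built on the Iwasawa cocycle identity for $H$, and then invokes the Connes--Kasparov descriptions of $K_0(C^*_r G)$ and $K_0(C^*_r M)$ together with Harish-Chandra's Plancherel theory to match the two pairings, rather than attempting to split the heat kernel along the $AN$ fibre. If you want to salvage your analytic approach, you would need to work from the start with $D_{\rm split}$, establish honestly how $D_{\rm split}^2$ decomposes relative to the fibration $X\to X/AN$ (including the non-product metric and the cross-terms), and carry out the change of variables in $C\circ H$ without suppressing the cocycle corrections.
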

  
The following theorem follows directly from Theorem \ref{thm:HochsWang-HC} and \ref{thm:reduction}.
\begin{theorem}\label{thm:HST} 
\[
\langle \Phi^P_g, \Ind_{C^*_r(G)} (D)\rangle =\int_{(X/AN)^g}c_{(X/AN)^g}{\rm AS}(X/AN)_g.
\]
\end{theorem}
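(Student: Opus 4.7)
The plan is to derive Theorem \ref{thm:HST} as an immediate consequence of the two preceding results, Theorems \ref{thm:HochsWang-HC} and \ref{thm:reduction}, by composing them in the natural way. First I would invoke Theorem \ref{thm:reduction} to rewrite the left-hand side as
\[
\langle \Phi^P_g, \Ind_{C^*_r(G)}(D)\rangle \;=\; \langle \tr^M_g, \Ind_{C^*_r(M)}(D_{X/AN})\rangle,
\]
which replaces the higher cyclic cocycle $\Phi^P_g$ on $\mathcal{C}(G)$ by the ordinary orbital integral $\tr^M_g$ on $\mathcal{C}(M)$, and trades the original Dirac operator $D$ on $X$ for the reduced Dirac operator $D_{X/AN}$ on the $M$-proper cocompact manifold $X/AN \cong M\times_{K\cap M} S$.

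Next I would apply Theorem \ref{thm:HochsWang-HC}, part (1), to the pairing on the right. For this to be legitimate, one needs to know that $M$ has a compact Cartan subgroup; but this is built into the setup, since by definition $P = MAN$ is a \emph{cuspidal} parabolic subgroup of $G$ precisely when $M$ admits a compact Cartan. Since $g \in M$ is semisimple, Theorem \ref{thm:HochsWang-HC} then gives
\[
\langle \tr^M_g, \Ind_{C^*_r(M)}(D_{X/AN})\rangle \;=\; \int_{(X/AN)^g} c_{(X/AN)^g}\, \mathrm{AS}_g(D_{X/AN}),
\]
where $(X/AN)^g$ is the fixed-point submanifold for the action of $g$ on $X/AN$ and $c_{(X/AN)^g}$ is a cut-off function for the proper action of the centralizer $Z_M(g)$ on $(X/AN)^g$.

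Finally, I would match the local integrand. By construction of $D_{X/AN}$ from $E_{X/AN} = M\times_{K\cap M}(S_{\mathfrak{p}\cap\mathfrak{m}}\otimes \widetilde{W})$ and the slice-compatible geometric data on $X/AN$, the differential form $\mathrm{AS}_g(D_{X/AN})$ produced by the general formula \eqref{eq:X-geom-form} is exactly the form denoted $\mathrm{AS}(X/AN)_g$ in the statement of Theorem \ref{thm:HST}; this is a direct verification from the definitions. Combining the three steps yields the claimed identity.

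The main conceptual point, and the place where the proof secretly relies on real structure theory, is the reduction in step two: it is Theorem \ref{thm:reduction} that does the analytic heavy lifting, and the cuspidality of $P$ is what ensures that the conclusion of Theorem \ref{thm:HochsWang-HC} for $(M, X/AN)$ is the nontrivial alternative (rather than zero), so that the fixed-point integral on $(X/AN)^g$ is actually the right answer. Given these two inputs, the proof is a pure composition.
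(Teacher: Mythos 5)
Your proposal is correct and follows exactly the route the paper itself indicates: the paper's one-line justification is that the theorem "follows directly from Theorem \ref{thm:HochsWang-HC} and \ref{thm:reduction}," which is precisely your two-step composition. Your added observation that cuspidality of $P$ is what guarantees $M$ has a compact Cartan subgroup, so that the nonvanishing alternative (1) of Theorem \ref{thm:HochsWang-HC} applies, is a correct and useful piece of bookkeeping that the paper leaves implicit.
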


\subsection{The example of $G/K$}

We look at the example of $X=G/K$ with the left $G$ action. Song and Tang \cite{st} prove the following theorem using Harish-Chandra's theory of orbital integrals;
it can also be derived from Theorem \ref{thm:HST} as a corollary. 
%which can also be derived from Theorem \ref{thm:HST} as a corollary. 
\begin{theorem}\label{thm:st-pairing}Let $H$ be a Cartan subgroup of $G$, and $T:=K\cap H$ with $t\in T$.  Assume that $T<M$ is a Cartan subgroup of $M$, and $P$ is a maximal cuspidal parabolic subgroup, i.e. $T$ is maximal among all possible $P$. Let $\Delta^M_T$ be the  corresponding Weyl denominator. 
\begin{enumerate}
\item 
\begin{equation}\label{thm:genrank}
\langle \Phi^P_{e}, \operatorname{Ind}_{C^*_r(G)}(D)\rangle =  \frac{1}{|W_{M \cap K}|} \cdot \sum_{w \in W_K} m\left(\sigma^{M}(w \cdot \mu)\right),
\end{equation}
where $\sigma^{M}(w \cdotp \mu)$ is the discrete series representation of $M$ with Harish-Chandra parameter $w \cdot \mu$, and $m\left(\sigma^{M}(w \cdot \mu)\right)$ is its Plancherel measure, and $W_{M\cap K}$ is the Weyl group of $M\cap K$;
\item 
\begin{equation}\label{eq:char}
\langle \Phi^P_{t}, \operatorname{Ind}(D) \rangle =  \frac{\sum_{w \in W_K} (-1)^w e^{w \cdot \mu}(t)}{\Delta^{M}_T(t)}.
\end{equation}
\end{enumerate}
\end{theorem}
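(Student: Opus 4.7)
The plan is to derive both equalities by reducing to the symmetric space of $M$, applying the reduction Theorem \ref{thm:reduction} and then invoking the $L^2$-index formula of Theorem \ref{thm:con-mos-L2} and the equivariant index formula of Theorem \ref{thm:HochsWang-HC}. Since $G$ acts transitively on $X=G/K$, Abels' slice theorem forces the slice $S$ to reduce to a single point in the identification $X\cong G\times_K S$, so that $X/AN \cong M\times_{M\cap K}S \cong M/(M\cap K)$, the Riemannian symmetric space of $M$. The Dirac bundle $E_{X/AN}$ becomes $M\times_{M\cap K}(S_{\mathfrak{p}\cap\mathfrak{m}}\otimes \widetilde{W})$ with $\widetilde{W}:= V_\mu\otimes S_{\mathfrak{k}/(\mathfrak{k}\cap\mathfrak{m})}$, and $D_{X/AN}$ is the associated $M$-invariant Dirac operator on $M/(M\cap K)$ twisted by $\widetilde{W}$. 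Theorem \ref{thm:reduction} therefore identifies $\langle\Phi^P_g,\operatorname{Ind}_{C^*_r(G)}(D)\rangle$ with $\langle\tr^M_g,\operatorname{Ind}_{C^*_r(M)}(D_{X/AN})\rangle$ for $g=e$ and $g=t$.

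The second step is a representation-theoretic decomposition of $\widetilde{W}$ as an $(M\cap K)$-module. Using the hypothesis that $T\subset M\cap K$ is a compact Cartan subgroup of $M$ (which holds because $P$ is maximal cuspidal), Kostant's formula for tensoring with the spin module gives a virtual decomposition
\[
V_\mu\otimes S_{\mathfrak{k}/(\mathfrak{k}\cap\mathfrak{m})}\;\cong\;\bigoplus_{w\in W_K/W_{M\cap K}} V^{M\cap K}_{w\cdot \mu}
\]
up to the appropriate $\rho$-shifts absorbed in the notation $w\cdot\mu$. Under the Atiyah-Schmid Dirac induction from $M\cap K$ to $M$, each summand $V^{M\cap K}_{w\cdot\mu}$ produces the discrete series $\sigma^M(w\cdot\mu)$ of $M$. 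Consequently $\operatorname{Ind}_{C^*_r(M)}(D_{X/AN})=\sum_{w\in W_K/W_{M\cap K}}[\sigma^M(w\cdot\mu)]$ in $K_0(C^*_r(M))$.

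For case (1), applying Connes-Moscovici's $L^2$-index theorem (Theorem \ref{thm:con-mos-L2}) on $M/(M\cap K)$ pairs $\tr^M_e$ with each discrete series class to give its formal dimension $m(\sigma^M(w\cdot\mu))$; rewriting the sum over $W_K/W_{M\cap K}$ as $|W_{M\cap K}|^{-1}\sum_{w\in W_K}$ yields \eqref{thm:genrank}. For case (2), Theorem \ref{thm:HochsWang-HC}(1) applies because $M$ has the compact Cartan $T$, and identifies $\langle \tr^M_t,[\sigma^M(w\cdot\mu)]\rangle$ with $\Theta_{\sigma^M(w\cdot\mu)}(t)$, the value at $t$ of the global character of the discrete series. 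Harish-Chandra's character formula expresses this as $\Delta^M_T(t)^{-1}\sum_{w'\in W^M_T}(-1)^{w'}e^{w'\cdot(w\cdot \mu)}(t)$, and summing over $w\in W_K/W_{M\cap K}$ and then over $w'\in W^M_T$ collapses, after tracking signs, to a single sum over $W_K$ producing the right-hand side of \eqref{eq:char}.

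The main obstacle is the precise bookkeeping of $\rho$-shifts and Weyl-group orbits in identifying the spin-tensor decomposition above with the Harish-Chandra parameters of genuine discrete series: one must verify that twisting by $S_{\mathfrak{k}/(\mathfrak{k}\cap\mathfrak{m})}$ shifts the highest weight $\mu$ by exactly the amount needed to produce a regular parameter for $M$, and that the sign conventions line up so that the final Weyl sum in \eqref{eq:char} matches the antisymmetric numerator in Harish-Chandra's formula. Once these identifications are secured, the rest of the argument is a direct assembly of the earlier results.
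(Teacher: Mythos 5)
Your proposal is correct in outline, and it coincides with the \emph{alternative} derivation that the survey explicitly flags just before stating the theorem: ``Song and Tang \cite{st} prove the following theorem using Harish-Chandra's theory of orbital integrals; it can also be derived from Theorem \ref{thm:HST} as a corollary.'' You are taking the second route — reduce via Theorem \ref{thm:reduction} to the equal-rank group $M$, decompose $\widetilde{W}=V_\mu\otimes S_{\mathfrak{k}/(\mathfrak{k}\cap\mathfrak{m})}$ by Kostant's theorem, and then feed the resulting discrete-series classes of $M$ into Connes--Moscovici (for $g=e$) and Hochs--Wang's orbital-integral computation (for $g=t$). This is precisely how Theorem \ref{thm:HST} implies Theorem \ref{thm:st-pairing}, and Remark~4.11 explains that for equal-rank $G$ (the case $M=G$) this is exactly the Hochs--Wang \cite{Hochs-Wang-KT} argument. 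The \emph{primary} proof the paper cites, however, is the one in Song--Tang \cite{st}, which is a direct representation-theoretic computation pairing $\Phi^P_g$ against the Connes--Moscovici idempotent using Harish-Chandra's theory of orbital integrals, with no detour through geometric fixed-point formulae or the reduction $X\to X/AN$. The two approaches are genuinely different: yours leans on the index-theoretic machinery already built (and is cleaner once Theorems \ref{thm:reduction}, \ref{thm:con-mos-L2}, and \ref{thm:HochsWang-HC} are in hand), while Song--Tang's is self-contained at the level of harmonic analysis and — as Remark~4.11 emphasizes — handles the case where $\ker D$ is a \emph{limit} of discrete series uniformly, without needing to argue that every $w\cdot\mu$ is a regular parameter for $M$.

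On the technical side, two points that your ``main obstacle'' paragraph flags deserve a bit more attention than you give them. First, Kostant's decomposition of $V_\mu\otimes S_{\mathfrak{k}/(\mathfrak{k}\cap\mathfrak{m})}$ over $W_K/W_{M\cap K}$ carries signs $(-1)^{\ell(w)}$ (it is an identity of \emph{virtual} $(M\cap K)$-modules), and $\operatorname{Ind}_{C^*_r(M)}(D_{X/AN})$ is therefore a signed sum $\sum_w (-1)^{\ell(w)}[\sigma^M(w\cdot\mu)]$; you write an unsigned sum. This is harmless for \eqref{eq:char}, where the signs reappear via $(-1)^w$ in the numerator, but for \eqref{thm:genrank} you need to explain why the Plancherel-measure contributions are not cancelled — this works out because the sign in Kostant's formula matches the sign in Atiyah--Schmid's Dirac-induction convention, so each class appears with coefficient $+1$, but it is not automatic and should be spelled out. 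Second, when $w\cdot\mu$ lands on a wall, $\sigma^M(w\cdot\mu)$ is a limit of discrete series with Plancherel density zero, so \eqref{thm:genrank} is unaffected, while for \eqref{eq:char} you must invoke Harish-Chandra's character formula for limits of discrete series (not merely genuine ones) to get the stated numerator; this is exactly the extra generality Remark~4.11 points to, and it is where the Song--Tang proof has an advantage. With those caveats addressed, the collapse of the double Weyl sum $\sum_{W_K/W_{M\cap K}}\sum_{W_{M\cap K}}$ into a single sum over $W_K$ (using that the character numerator of a discrete series of $M$ is itself a sum over the \emph{compact} Weyl group $W_{M\cap K}$) does go through, and your assembly of the earlier theorems produces the statement.
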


\begin{remark}
When $G$ is equal rank, Eq. (\ref{thm:genrank}) was proved by Connes-Moscovici \cite{Connes-Moscovici:L2}, i.e. the $L^2$-trace of the operator $D$ is the formal degree of the associated discrete series representation of $G$. 
\end{remark}
\begin{remark}
When the kernel of $D$ gives a discrete series representation, i.e. $G$ has equal rank, Equation (\ref{eq:char}) in Theorem \ref{thm:st-pairing}  may be derived from Theorem \ref{thm:HST} and the computation in Hochs and Wang \cite{Hochs-Wang-KT}. Here, we do not assume $G$ to have equal rank in Theorem \ref{thm:st-pairing} and allow the kernel of $D$ to be a limit of discrete series representation. Equation (\ref{eq:char}) suggests that the index pairing can be used to detect some of the limits of discrete series representations of $G$.  
\end{remark}

\begin{remark}In Theorem \ref{thm:genrank}, we have assumed $P$ to be maximal. When $P$ is not maximal, it can be shown that the pairing in Theorem \ref{thm:st-pairing} vanishes. 
\end{remark}
As a special case of Theorem \ref{thm:PPT-index}, we have the following theorem generalizing the Connes-Moscovici $L^2$-index theorem, Theorem \ref{thm:con-mos-L2}, for homogeneous spaces.  
\begin{theorem}\label{thm:gk-vanest} Suppose that $G$ is unimodular. For a $G$-invariant Dirac operator on $X=G/K$
and $[\varphi]\in H^{2k}_{\rm diff}(G)$, we have
\[
\chi(\varphi)(\Ind_{C^*_r(G)}(D_\mu))=\frac{1}{(2\pi \sqrt{-1})^k}\Big\langle \widehat{A}(\mathfrak{g}, K)\wedge \operatorname{ch}(V_\mu)_{\mathfrak{m}^*}\wedge \Phi([\varphi]), [V]\Big\rangle,
\]
where $\Phi([\varphi])$ is a class in $H^{2k}(\mathfrak{g}, K)$ defined by a $G$-invariant closed differential form on $G/K$ via the van Est isomorphism in Proposition \ref{prop:vanest}. 
\end{theorem}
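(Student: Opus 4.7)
The plan is to deduce this formula as the specialization of Theorem \ref{thm:PPT-index} to the homogeneous case $X=G/K$, using that the integrand becomes $G$-invariant and thus reduces to a cohomological pairing on the fiber direction of $T^*(G/K)\cong G\times_K \mathfrak{p}^*$.

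First, I would apply Theorem \ref{thm:PPT-index} directly to $D_\mu$ on $X=G/K$ with $F=E_\mu=G\times_K V_\mu$, obtaining
\[
\chi(\varphi)(\Ind_{C^*_r(G)}(D_\mu)) = \frac{1}{(2\pi\sqrt{-1})^k(2k)!}\int_{T^*(G/K)} c_X\,\Phi([\varphi])\wedge \widehat{A}(T^*(G/K))\wedge\ch(E_\mu).
\]
The key observation is that, because $D_\mu$ is $G$-invariant and because one may (and should) choose $G$-invariant connections to compute $\widehat{A}(T^*(G/K))$ and $\ch(E_\mu)$, both characteristic forms are $G$-invariant on $T^*(G/K)$. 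By Proposition \ref{prop:vanest} the same is true of $\Phi([\varphi])$. Thus the product $\Phi([\varphi])\wedge \widehat{A}(T^*(G/K))\wedge\ch(E_\mu)$ is a $G$-invariant closed form on $T^*(G/K)$, and only the cut-off $c_X$ breaks $G$-invariance in the integrand.

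Next, I would use the diffeomorphism $T^*(G/K)\cong G\times_K \mathfrak{p}^*$ to rewrite the integral as an integral over $G/K$ of a form coming from $\Lambda^{\mathrm{top}}\mathfrak{p}^*$-valued data. Concretely, a $G$-invariant closed form on $G\times_K \mathfrak{p}^*$ corresponds, by the standard van Est/Chern--Weil dictionary, to a class in the relative Lie algebra complex built on $\Lambda^*\mathfrak{p}^*$; integration along the fiber $\mathfrak{p}^*$ gives a $G$-invariant closed form on $G/K$ whose integral against $c_X$ equals the value of that relative Lie algebra class paired with the fundamental class $[V]$ (this is where the change of variables $c_X\cdot dg=dg$ on the $G$-orbit, guaranteed by unimodularity and the cut-off condition, is needed). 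Under this dictionary $\widehat{A}(T^*(G/K))$ becomes $\widehat{A}(\mathfrak{g},K)$, the Chern character $\ch(E_\mu)$ restricted to the fiber becomes $\operatorname{ch}(V_\mu)_{\mathfrak{m}^*}$, and $\Phi([\varphi])$ is identified with its image in $H^{2k}(\mathfrak{g},K)$ as in Proposition \ref{prop:vanest}.

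Finally, I would reconcile the constants. The factor $(2\pi\sqrt{-1})^{-k}(2k)!^{-1}$ of Theorem \ref{thm:PPT-index} has to match $(2\pi\sqrt{-1})^{-k}$ in the target formula; the missing $(2k)!$ must be absorbed either in the normalization of the fundamental class $[V]$ of $\mathfrak{p}^*$ (recall that $[V]$ is dual to a top form obtained by $(2k)$-fold antisymmetrization) or in the Chern--Weil representative of the invariant form used. The main obstacle, in my view, is precisely this bookkeeping step: one must verify that the normalizations of $\widehat{A}(\mathfrak{g},K)$, $\operatorname{ch}(V_\mu)_{\mathfrak{m}^*}$, $\Phi([\varphi])$ and $[V]$ chosen on the right-hand side agree, after fiber integration and conversion to relative Lie algebra cohomology, with the Chern--Weil representatives used on the left-hand side. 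The unimodularity assumption ensures that bi-invariant Haar measure and the Riemannian measure on $G/K$ are compatible so that the cut-off reduction $\int_{G/K} c_X\,\omega=\langle\omega,[G/K]\rangle$-type identity holds cleanly, which is the reason the hypothesis appears in the statement.
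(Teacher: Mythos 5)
Your proposal is correct and matches the paper's own (unstated) proof: the paper presents Theorem \ref{thm:gk-vanest} only as ``a special case of Theorem \ref{thm:PPT-index}'' with no further argument, and your specialization to $X=G/K$ followed by the fiber reduction $T^*(G/K)\cong G\times_K\mathfrak{p}^*$ is exactly that reduction. Your flag of the $(2k)!$ normalization mismatch between the two formulas is a legitimate caveat that the paper also leaves unaddressed.
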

\subsection{Summary of results for manifolds without boundary}
We have introduced 0-cyclic cocycles  $\tr_e$ and $\tr_g$ on the Harish-Chandra algebra $\mathcal{C}(G)$
and we have stated index theorems computing  the pairing of these 0-cyclic cocycles with the
index class $\operatorname{Ind}_{\mathcal{C}(G)}(D)\in K_0 (\mathcal{C}(G))
 \equiv \operatorname{Ind}_{C^*_r(G)}(D)\in  K_0 (C^*(G))$:
 $$\tr_e (\operatorname{Ind}_{\mathcal{C}(G)}(D))\,,\quad 
 \tr_g (\operatorname{Ind}_{\mathcal{C}(G)}(D))$$
  These results are due to Wang, c.f. Theorem \ref{thm:HWang-L2},  and Hochs-Wang, c.f. Theorem \ref{thm:HochsWang-HC}.
 We have then stated generalizations of these two theorems: the first theorem, c.f. Theorem \ref{thm:PPT-index}, by Pflaum-Posthuma-Tang, computes the pairing 
 of the index class with the cyclic cocycles 
 associated to cocycles in the differentiable group cohomology introduced in Theorem \ref{thm:character}; the second theorem, c.f. Theorem \ref{thm:HST} by Hochs-Song-Tang, computes the pairing between the index class and the higher orbital integrals introduced in Theorem \ref{thm:higherorbital}.\\

\section{Proper cocompact $G$-manifolds with boundaries}
\label{sec:boundary}

 In this section, we introduce the index class for a Dirac operator on a proper cocompact $G$-manifold.

\subsection{Geometry on $G$ proper manifolds with boundary}
We start with some generalities.
Let $Y_0$ be a  manifold with boundary, $G$ a finitely connected Lie group acting properly
and cocompactly on $Y_0$. We denote by $X$ the boundary of $Y_0$.
There exists a collar neighbourhood $U$ of the boundary $\pa Y_0$, $U\cong [0,2]\times \partial Y_0$,
 which is $G$-invariant and such that the action of $G$ on $U$ is of product type.
We assume that $Y_0$ is endowed with a $G$-invariant metric $\mathbf{h}_0$ which is of product type near the boundary. 
 We let
$(Y_0,\mathbf{h}_0)$ be the resulting Riemannian  manifold with boundary;  in the collar
neighborhood $U\cong [0,2]\times \partial Y_0$ the metric  $\mathbf{h}_0$  can be written, through the above isomorphism,
 as $dt^2 + \mathbf{h}_X$,
with $\mathbf{h}_X $ a $G$-invariant Riemannian metric on  $X=\partial Y_0$. We denote by $c_{Y_0}$ 
a cut-off function for the action of $G$ on $Y_0$; since the action is cocompact, this is a compactly supported smooth function.
We consider the associated manifold with cylindrical ends
$\widehat{Y}:= Y_0\cup_{\partial Y_0} \left(   (-\infty,0] \times \partial Y_0 \right)$,
endowed with the extended metric $\widehat{\mathbf{h}}$ and the extended $G$-action.
We denote by  $(Y,\mathbf{h})$ the $b$-manifold associated to  $(\widehat{Y},\widehat{\mathbf{h}})$. See \cite{Melrose-Book}. We shall often treat 
$(\widehat{Y},\widehat{\mathbf{h}})$ and $(Y,\mathbf{h})$ as the same object. We denote by $c_Y$ the  obvious extension of the cut-off function
$c_{Y_0}$ for the action of $G$ on $Y_0$ (constant along the cylindrical end); this is a cut-off function of the extended action of $G$ on $Y$.
If $x$ is a boundary defining function for the cocompact $G$-manifold $Y_0$, then the $b$-metric $\mathbf{h}$ has the following
product-structure near the boundary $X$:
 $$\frac{dx^2}{x^2}+ \mathbf{h}_X.$$

\medskip
Let us  fix a slice $Z_0$ for the $G$ action on $Y_0$; thus
$$Y_0\cong G\times_K Z_0$$
with $K$ a maximal compact subgroup of $G$ and $Z_0$ a smooth compact manifold  with boundary endowed with a $K$-action. 
We denote by $S$ the boundary $\partial Z_0$. Consequently, $Y\cong G\times_K Z$
with $Z$ the $b$-manifold associated to $Z_0$ and the boundary
\[
X = \partial Y_0 \cong G \times_K S. 
\] 

We shall concentrate on the case where the symmetric space $G/K$, the $G$-manifold $Y_0$ and the $K$-slice $Z_0$
are all even dimensional. 

\begin{example}
\label{basic-example-1}
Start with an inclusion $K\subset G$ of Lie groups with $K$ compact, and 
let $Z_0$ be a compact $K$-manifold with boundary $\partial Z_0 =:S_0$. Then $Y_0:= G\times_K Z_0$ is an example of manifold with boundary $\partial Y_0=G\times_K\partial S_0$, equipped with a proper, cocompact action of $G$. We also have the associated 
$b$-manifolds $Z$ and  $Y=G\times_K Z$.
If we choose a $K$-
invariant inner product on the Lie algebra $\mathfrak{g}$ of $G$
and a $K$-invariant $b$-metric $g_{Z}$ on $Z$ of product-type near the boundary
then, as in the closed case, we obtain a $G$-invariant metric on $Y$; we call such a metric  {\it slice compatible}. 

As an explicit example, consider $G=SL(2,\mathbb{R})$ and $K=SO(2)$ acting on the unit disk $\mathbb{D}^2$ in the complex plane by rotations around the origin. The resulting manifold 
$Y:=SL(2,\mathbb{R})\times_{SO(2)}\mathbb{D}^2$ is a $4$-dimensional fiber bundle over hyperbolic $2$-space $SL(2,\mathbb{R})\slash SO(2)\cong\mathbb{H}^2$ with fiber $\mathbb{D}^2$. 
The boundary $\partial Y$ of this manifold is isomorphic to $SL(2,\mathbb{R})$.
\end{example}

\subsection{Dirac operators}\label{subsect:b-dirac}$\;$\\
%\textcolor{m}{$\bullet$ $\longrightarrow$}   Talk about $G$-equivariant families of Dirac operators. 
%Make fundamental assumption about the boundary family.\\
We assume the existence of  a $G$-equivariant bundle of Clifford modules $E_0$ on $Y_0$, endowed with a Hermitian
metric, product-type near the boundary, for which the Clifford action is
unitary, and  equipped with a Clifford connection also of product type near the boundary. 
Associated to these structures there is a generalized $G$-invariant 
Dirac operator $D$ on $Y_0$ with product structure near the
boundary acting on the sections of  $E_0$. We denote by $D_{\pa}$ the operator induced on the boundary.
We employ the same symbol, $D$, for the associated $b$-Dirac operator on $M$, acting on the extended
Clifford module $E$.
We also have $D_{\cyl}$ on $\RR\times \pa Y_0\equiv \cyl(\pa Y_0)$.
We shall make the following fundamental assumption.
\begin{assumption}\label{assumption:invertibility}
There exists  $\alpha>0$ such that
\begin{equation}\label{invertibility}
{\rm spec}_{L^2} (D_{\partial})\cap [-\alpha,\alpha]=\emptyset.
\end{equation}
\end{assumption}
We also have $D_{\cyl}$ on $\RR\times \pa Y_0\equiv \cyl(\pa Y_0)$. It should be noticed that because of the self-adjointness of $D_{\partial}$,
assumption \eqref{invertibility} implies the $L^2$-invertibility of $D_{\cyl}$.

\begin{example}
As an example where this condition is satisfied we can consider a $G$-proper manifold with boundary
with a $G$-invariant riemannian metric and a $G$-invariant Spin structure with the property that the metric
on the boundary is of positive scalar curvature. We would then consider the Spin-Dirac operator $D$;
because of the psc assumption on the boundary we do have that $D_\partial$ is $L^2$-invertible.
These manifolds arise as in \cite{guo-mathai-wang-psc} from the slice theorem and  a $K$-Spin-manifold with boundary $S$
endowed with a $K$-invariant metric which is of psc on $\partial S$. Such compact manifolds $S$ arise, for example,  as follows.
Consider a compact $K$-manifold without boundary $N$ endowed with a $K$-invariant metric of positive scalar curvature.
For the existence of such manifolds see for example \cite{lawson-yau-psc, hanke-symmetry, wiemeler-TAMS}. We can 
now perform on this manifold $K$-equivariant surgeries and produce  along the process a $K$-manifold with boundary $W$.
Under suitable conditions (for example, equivariant surgeries only of codimension
at least equal to 3) this manifold with boundary $W$ will have a $K$-invariant metric of positive scalar curvature. 
We can now take the connected sum of $W$  with a closed $K$-manifold not admitting a K-invariant metric of psc. See \cite{hanke-symmetry, wiemeler-TAMS}. The result will be a manifold $S$ with a K-invariant metric which is of psc
(only) on  $\partial S$.
\end{example}

\subsection{The index class and $b$-calculus}
%\section{$C^*$ Atiyah-Patodi-Singer index classes for proper actions}\label{sect:b-c*-index}

%\subsection{$C^*$-algebras and $C^*$-Hilbert modules}$\;$\\
Let $Y_0$ be a $G$-proper manifold with boundary, with compact quotient, and let
$Y$ be the associated manifold with cylindrical ends, or, equivalently, the associated $b$-manifold.
In the $b$-picture we consider $\mathcal{E}_b$, the $C^*_r G$-Hilbert module obtained
 by (double) completion of  $\dot{C}^\infty_c (Y,E)$ endowed with the $C^\infty_c (G)$-valued
inner product 
$$(e,e^\prime)_{C^*_c (G)} (x):= (e,x\cdot e^\prime)_{L^2_b (Y,E)}, \quad e, e^\prime
\in \dot{C}^\infty_c (Y,E), \;x\in G\,.$$
Here the dot means vanishing of infinite order at the boundary; notice that on the right hand side we employ
$L^2_b (Y,E)$, the $L^2$ space associated to the volume form associated to the $b$ metric $\mathbf{h}$.
One can prove that   $\mathbb{K}(\mathcal{E}_b)$ is isomorphic to the 
relative Roe algebra $C^* (Y_0 \subset Y,E)^G$, 
the latter defined by completion of operators with propagation at a finite distance from $Y_0$.
See \cite{PS-Stolz} for precise definitions and proofs.
We then have the following canonical
isomorphisms
\begin{equation}\label{iso-in-K}
K_* (\mathbb{K}(\mathcal{E}_b))\cong  K_* (C^* (Y_0\subset Y,E)^G)\cong K_* (C^* (Y_0,E_0)^G)\cong K_* (C_r^*(G))
\end{equation}
with the second isomorphism also explained in \cite{PS-Stolz} and the third one the Morita isomorphism already explained in these notes.

We now want to prove the existence  of an index class $\Ind (D)\in K_0 (C^*(Y\subset Y,E)^G)$ under Assumption \ref{assumption:invertibility};
next, we shall want to
 find a smooth, i.e. dense and holomorphically closed, subalgebra $\mathcal{A}^\infty (Y,E)$ of $C^* (Y_0\subset Y,E)^G$ and a
smooth representative $\Ind_\infty (D)\in K_0 (\mathcal{A}^\infty (Y,E))$ corresponding  to  $\Ind (D)\in K_0 (C^*(Y\subset Y,E)^G)$
under the natural isomorphism between $K_0 (\mathcal{A}^\infty (Y,E))$ and $K_0 (C^*(Y\subset Y,E)^G)$.\\
We use the $b$-pseudodifferential
calculus as in \cite{PP2}, even though this is not strictly necessary for the first task. 
For a detailed account of the $b$-calculus we refer the reader to Melrose book, \cite{Melrose-Book}.

\noindent
Let us recall very briefly the basics of the $b$-calculus.
For simplicity we assume that the boundary of our cocompact $G$-proper manifold $Y_0$ is connected. As above, we denote by $Y$
the associated $b$-manifold.
We sometimes denote the boundary of $Y_0$ by $X$; for notational convenience we set $\pa Y=X$. Finally, we often expunge the vector
bundle $E$ from the notation.
% \textcolor{cyan}{Do we actually use the notation $N$ ?}\\

\medskip
We can define the $b$-stretched product $Y\times_b Y$ which inherits in a natural way an action of
$G\times G$ and a diagonal action of $G$, see \cite{LeichtnamPiazzaMemoires}. Proceeding as in these references, we can define
the algebra of $G$-equivariant $b$-pseudodifferential operators on $Y$ with $G$-compact support, denoted ${}^b \Psi^*_{G,c}(Y)$, which is a $\ZZ$-graded algebra.  %In particular, ${}^b \Psi^{-\infty}_{G,c}(M)$ is an algebra.\\
% Of course we can also consider ${}^b \Psi^*_{G}(M)$, i.e. $b$-pseudodifferential operators 
% without any assumption on the support
% of the Schwartz kernel in $M\times_b M/G$; operators in these vector spaces will 
%{\bf not} compose. In particular, we can consider %the vector space  
%${}^b \Psi^{-\infty}_{G}(M)$.

Let us fix $\epsilon >0$. Then, as in the compact case, we can extend  the algebra ${}^b \Psi^*_{G,c}(Y)$ and consider
 the $G$-equivariant $b$-calculus of $G$-compact support 
with $\epsilon$-bounds, denoted  ${}^b \Psi^{*,\epsilon}_{G,c}(Y)$. Thus, by definition,
$${}^b \Psi^{*,\epsilon}_{G,c}(Y)= {}^b \Psi^{*}_{G,c}(Y) + {}^b \Psi^{-\infty,\epsilon}_{G,c}(Y) + \Psi^{-\infty,\epsilon}_{G,c}(Y).$$
The second term on the right hand side corresponds to the 
Schwartz kernels on $Y\times_b Y$, smooth in the interior,
conormal of order $\epsilon$ on the left and right boundaries  of $Y\times_b Y$
and smooth up to the front face (and of course $G$-equivariant). The third term on the right hand side 
corresponds instead to  Schwartz kernels on $Y\times Y$, smooth in the interior and
conormal of order $\epsilon$ on the boundary hypersurfaces of $Y\times Y$. See \cite{Melrose-Book} and for a quick treatment 
the Appendix in \cite{Mel-P1}.  Elements in 
$\Psi^{-\infty,\epsilon}_{G,c}(Y)$ are called {\it residual}.
Composition formulae 
for the elements in the calculus with bounds are  as in \cite[Theorem 4]{Mel-P1}. 

\smallskip
\noindent
Restriction to the front face of 
$Y\times_b Y$ defines  the indicial operator $I\,:\, {}^b \Psi^{-\infty}_{G,c} (Y)\to {}^b \Psi^{-\infty}_{G,c,\RR^+} (\overline{N^+ \pa Y})$ and, more generally, 
 \begin{equation}\label{indicial}I\,:\, {}^b \Psi^{-\infty,\epsilon}_{G,c} (Y)\to {}^b \Psi^{-\infty,\epsilon}_{G,c,\RR^+} (\overline{N^+ \pa Y})
 \end{equation}
 with $\overline{N^+ \pa Y}$ denoting the compactified inward-pointing normal bundle to the boundary and the subscript
$\RR^+$ denoting equivariance with respect to the natural $\RR^+$-action.  See \cite[Section 4.15]{Melrose-Book},
where $\overline{N^+ \pa Y}$ is denoted $\widetilde{Y}$.
Recall that there is a diffeomorphism of $b$-manifolds:  $\overline{N^+ \pa Y}\simeq \pa Y\times [-1,1]$. Metrically we can think of
$\overline{N^+ \pa Y}$ as the cylinder $\cyl(\pa Y):=\RR\times \pa Y$ and with a small abuse  of notation
we shall adopt this notation henceforth, thus writing
\eqref{indicial}
as
 \begin{equation}\label{indicial-bis}I\,:\, {}^b \Psi^{-\infty,\epsilon}_{G,c} (Y)\to {}^b \Psi^{-\infty,\epsilon}_{G,c,\RR^+} (\cyl(\pa Y)).
 \end{equation}

By fixing a cut-off function $\chi$ on the collar neighborhood of the
boundary, equal to 1 on the boundary, we can define a section $s$ to the
indicial homomorphism $I$: 
\begin{equation}\label{map-s}
s:   {}^b \Psi^{-\infty,\epsilon}_{c,G,\RR^+} (\cyl(\pa Y))\to  {}^b \Psi^{-\infty,\epsilon}_{G,c} (Y)\,;
\end{equation} 
the linear map $s$ associates to an $\RR^+$-invariant operator
$G$ on  $\cyl(\pa Y)$ an operator on the $b$-manifold  $Y$; the latter is obtained by pre-multiplying
and post-multiplying by the cut-off function $\chi$.\\
The Mellin transform in the $s$-variable for  an $\RR^+$-invariant kernel $\kappa (s,y,y')\in 
 {}^b \Psi^{-\infty,\epsilon}_{G,c,\RR^+} (\cyl(\pa Y))$ defines an isomorphism $\mathcal{M}$
between ${}^b \Psi^{-\infty,\epsilon}_{G,c,\RR^+} (\cyl(\pa Y))$ and holomorphic families 
of operators 
$$\{\RR\times i (-\epsilon,\epsilon)\ni\lambda \to \Psi^{-\infty}_{G,c} (\partial Y)\}$$
which rapidly decrease as $|{\rm Re}\lambda |\to \infty$  as functions with values in the Fr\'echet algebras $\Psi^{-\infty}_{c,G} (\partial Y)$. We denote by $I(R,\lambda)$ or sometimes by $\widehat{R}(\lambda)$, the Mellin transform of $R\in  
{}^b \Psi^{-\infty,\epsilon}_{c,G,\RR^+} (\cyl(\pa Y))$; if $P\in  {}^b \Psi^{-\infty,\epsilon}_{G,c} (Y)$
then its indicial family is by definition the indicial family associated to its indicial operator $I(P)$. The inverse $\mathcal{M}^{-1}$ is obtained by associating to a holomorphic 
family $\{S(\lambda), |{\rm Im} \lambda|<\epsilon \}$, rapidly decreasing in ${\rm Re}\lambda$,
the $\RR^+$-invariant Schwartz kernel $\kappa_S$ that in projective coordinates is given by 
\begin{equation}\label{inverse-mellin}
\kappa_G (s,y,y^\prime)=\int_{{\rm Im}\lambda=r} s^{i\lambda} G(\lambda)(y,y^\prime)d\lambda
\end{equation}
with $r\in (-\epsilon,\epsilon)$.

\medskip
Let $D$ be an equivariant  $\ZZ_2$-graded odd  Dirac operator, of product type near the boundary and let us make Assumption \ref{assumption:invertibility}.
We sketch the proof  of  the existence of a $C^*$-index class associated to $D^+$.
One begins
by finding a symbolic parametrix $Q_\sigma$ to $D^+$, with remainders $R^\pm_\sigma$:
\begin{equation}\label{para1}
Q_\sigma D^+=\Id - R^+_\sigma\,,\quad D^+ Q_\sigma=\Id - R_\sigma^-\,.
\end{equation}
We can choose
$Q_\sigma\in {}^b \Psi^{-1}_{G,c}(Y)$, i.e. of $G$-compact support
and then get
 $R^\pm_\sigma\in {}^b \Psi^{-\infty}_{G,c}(Y)$.

Consider now the indicial operator $I(D^+)\equiv D^+_{{\rm cyl}}\equiv \partial/\partial t + D_{\partial}$; by Assumption  \ref{assumption:invertibility} we know that
this operator is $L^2$-invertible on the $b$-cylinder ${\rm cyl}(\partial Y)$. %)\equiv \overline{N_+ \pa M}$.
Consider the 
smooth 
 kernel $K(Q^\prime)$ on $Y\times_b Y$ which is zero outside a neighbourhood $U_{{\rm ff}}$ of the front face 
 and such that in $U_{{\rm ff}}$ 
 with (projective) coordinates $(x,s,y,y')$ is equal to 
$$K(Q^\prime)(s,x,y,y^\prime):= \chi (x) \int_{-\infty}^\infty s^{i\lambda} K((D_\pa +i\lambda)^{-1}  \circ I( R_\sigma^-,\lambda))(y,y^\prime) d\lambda\,.$$
Put it differently, $Q^\prime= s(I(D)^{-1}I(R_\sigma)$, with $s$ as in \eqref{map-s} and where we have used the inverse Mellin transform 
for the expression of $ I(D)^{-1}I(R_\sigma)$.
Notice that because of the presence of $(D_\pa +i\lambda)^{-1} $ this is not compactly supported. Still,
by proceeding {\it exactly} as in \cite[Lemma 4 and 5]{LPETALE} we can establish the following fundamental result:

\begin{theorem}\label{b-index-class-C*}
The kernel $K(Q^\prime)$ defines  a bounded operator on the $C^*_r G$-module $\mathcal{E}_b$:
$Q^\prime\in \mathbb{B}(\mathcal{E}_b)$.  If $Q^b:= Q_\sigma-Q^\prime$, then $Q^b$ provides an inverse of $D^+$ modulo elements in $\mathbb{K}(\mathcal{E}_b)$. Thus, for a Dirac operator $D$ satisfying the Assumption \ref{assumption:invertibility} there is a well defined index class $$\Ind_{C^*} (D)\in K_0 ( \mathbb{K}(\mathcal{E}_b))\equiv K_0 (C^* (Y_0\subset Y,E)^G)\,.$$ 
\end{theorem}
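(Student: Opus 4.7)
The plan is to follow the strategy for étale groupoids in \cite{LPETALE}, adapting it to the Lie group / Abels-slice setting. The overall scheme is a three-stage bootstrap: start from the symbolic parametrix $Q_\sigma \in {}^b\Psi^{-1}_{G,c}(Y)$ which already inverts $D^+$ modulo residual-type operators with $G$-compact support; build the boundary correction $Q' = s\bigl(I(D)^{-1} I(R_\sigma^-)\bigr)$ via inverse Mellin transform; then verify that $Q^b := Q_\sigma - Q'$ inverts $D^+$ modulo operators that represent elements of $\mathbb{K}(\mathcal{E}_b)$ under the identification $\mathbb{K}(\mathcal{E}_b) \cong C^*(Y_0\subset Y,E)^G$ of \eqref{iso-in-K}.

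First I would establish the boundedness of $Q'$ on $\mathcal{E}_b$. The key input is Assumption~\ref{assumption:invertibility}, which guarantees that the indicial family $I(D^+,\lambda) = D_\partial + i\lambda$ is invertible as a bounded operator on $L^2_G(\partial Y, E|_{\partial Y})$ for all $\lambda$ in the strip $|\operatorname{Im}\lambda| < \alpha$, with uniform bounds on vertical lines in the strip. Since $R_\sigma^-\in {}^b\Psi^{-\infty}_{G,c}(Y)$ is residual and of $G$-compact support, its indicial family $I(R_\sigma^-,\lambda)$ is a holomorphic family of smoothing operators on $\partial Y$ with $K\times K$-equivariant Schwartz kernels of fixed $G$-compact support, rapidly decreasing as $|\operatorname{Re}\lambda|\to\infty$. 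Composing with $(D_\partial + i\lambda)^{-1}$ preserves these features. One then shifts the contour of integration in \eqref{inverse-mellin} to $\operatorname{Im}\lambda = r$ for some $r\in (0,\alpha)$, producing an $\mathbb{R}^+$-quasi-invariant kernel on the cylinder that decays like $s^{r}$ as $s\to 0^+$; multiplying by $\chi$ inserts this kernel into a neighbourhood of the front face of $Y\times_b Y$ with the correct conormal behaviour. The resulting kernel $K(Q')$ lies in ${}^b\Psi^{-\infty,r}_{G,c}(Y) + \Psi^{-\infty,r}_{G,c}(Y)$, and by the composition/boundedness estimates of the $b$-calculus with bounds (analogous to \cite[Theorem 4]{Mel-P1}) it defines a bounded operator on $\mathcal{E}_b$.

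Second I would verify the parametrix identity. By construction $D^+$ commutes with its indicial operator up to residual errors, so $D^+ \circ s(I(D)^{-1} I(R_\sigma^-)) = s(I(R_\sigma^-)) + [\,D^+,\chi\,]\cdot(\text{smoothing on cyl})$. The commutator $[D^+,\chi]$ is supported in the collar and smoothing in the normal direction, so when combined with the $b$-smoothing boundary correction it produces an operator whose Schwartz kernel vanishes to all orders at both the left and right boundary faces of $Y\times_b Y$ (and is of $G$-compact support), i.e.\ it lies in $\Psi^{-\infty,\infty}_{G,c}(Y)$. Consequently $D^+ Q^b = \Id - R^{b,-}$ with $R^{b,-}$ a truly residual $G$-invariant operator supported near the diagonal in an appropriate sense; and symmetrically $Q^b D^+ = \Id - R^{b,+}$. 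Such residual operators represent elements of $\mathbb{K}(\mathcal{E}_b)$, exactly as in the proof of \cite[Lemma 4, Lemma 5]{LPETALE}, since they correspond under the identification with $C^*(Y_0\subset Y,E)^G$ to norm limits of $G$-compactly supported finite-propagation smoothing operators.

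The hard part is the second step: extracting compactness on the Hilbert module $\mathcal{E}_b$ from the residual nature of $R^{b,\pm}$. One must use that a $G$-invariant residual $b$-operator of $G$-compact support can be written via the Abels isomorphism $Y\cong G\times_K Z$ as a $K\times K$-invariant smooth map $G\to \Psi^{-\infty}(Z,E|_Z)$ with suitable compact support, and that such operators lie in the algebraic tensor product $C_c(G)\otimes\mathbb{K}(L^2(Z,E|_Z))$, whose closure inside $\mathbb{B}(\mathcal{E}_b)$ is precisely $\mathbb{K}(\mathcal{E}_b)$. Once these identifications are in place, Kasparov's standard machinery produces the Connes--Skandalis idempotent $P_{Q^b}$ as in \eqref{CS-projector} (now with entries in $\mathbb{B}(\mathcal{E}_b)$ and remainders $S_\pm = R^{b,\pm}$ in $\mathbb{K}(\mathcal{E}_b)$), yielding the class
\[
\Ind_{C^*}(D) := [P_{Q^b}] - [e_1] \in K_0(\mathbb{K}(\mathcal{E}_b)) \cong K_0(C^*(Y_0\subset Y,E)^G) \cong K_0(C^*_r(G)),
\]
independent of the choices involved by the usual homotopy argument between any two parametrices.
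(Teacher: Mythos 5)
The paper itself does not prove this theorem in situ: it merely asserts that the result follows ``by proceeding exactly as in \cite[Lemmas 4 and 5]{LPETALE},'' and your proposal reconstructs that argument along broadly the right lines. But there is a sign inconsistency in your parametrix identity that makes your second step fail as written. You correctly compute $D^+ \circ s\bigl(I(D)^{-1} I(R_\sigma^-)\bigr) = s\bigl(I(R_\sigma^-)\bigr) + [D^+,\chi]\cdot(\cdots)$, so the indicial family of $D^+Q'$ equals $I(R_\sigma^-,\lambda)$. With $Q^b = Q_\sigma - Q'$, the new remainder $\Id - D^+Q^b = R_\sigma^- + s\bigl(I(R_\sigma^-)\bigr) + (\text{comm.})$ then has indicial family $2\,I(R_\sigma^-,\lambda)$, which is nonzero; this operator therefore does \emph{not} vanish on the front face of $Y\times_b Y$ and is not residual, contradicting your assertion that $R^{b,-}$ is ``truly residual.'' The fix is $Q^b = Q_\sigma + Q'$, which kills the indicial family of the remainder since $I(R_\sigma^-) - I\bigl(s(I(R_\sigma^-))\bigr) = 0$. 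The minus sign in the printed statement is most likely a typo inherited from the survey, but you should have noticed that your own computation rules it out, rather than asserting residuality without checking the indicial family.

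Two smaller remarks. First, the contour shift in your boundedness argument points the wrong way: shifting to $\operatorname{Im}\lambda = r > 0$ gives the estimate $|\kappa(s)| \leq C_r\, s^{-r}$, i.e.\ decay as $s\to\infty$, not $s\to 0^+$; decay at $s\to 0^+$ requires $r<0$, and one needs \emph{both} directions --- exploiting the holomorphicity of $\lambda\mapsto (D_\partial + i\lambda)^{-1}$ in the full strip $|\operatorname{Im}\lambda|<\alpha$ guaranteed by Assumption \ref{assumption:invertibility} --- to obtain conormal bounds of order $\epsilon$ at both the left and right boundary hypersurfaces. Second, the final step (residual $G$-compactly-supported operators represent elements of $\mathbb{K}(\mathcal{E}_b)$) is sound, and is precisely what the identification $\mathbb{K}(\mathcal{E}_b)\cong C^*(Y_0\subset Y,E)^G$ from \cite{PS-Stolz} delivers; you do not need to verify that by hand.
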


\smallskip
\noindent
We sometime refer to the passage from $Q_\sigma$ to $Q^b=Q_\sigma-Q^\prime$ as the passage from a symbolic parametrix to a {\it true parametrix}
or an {\it improved parametrix}.

\smallskip

The $b$-calculus approach to the $C^*$ index class will prove to be useful 
in establishing  the existence of a {\it smooth} index class, see below, and for proving a {\it higher} APS index formula.
However, there are other approaches to the $C^*$-index class. Indeed, in the recent
article  \cite{HWW2}  a coarse approach is explained. It should not be difficult
to show that the two index classes, the one defined here and the one defined in  \cite{HWW2}  are in fact the same.
(The proof should proceed as for  Galois coverings, where the analogous result is established in
\cite[Proposition 2.4]{PS-Stolz}.)\\
As in the case of Galois coverings, and always under the invertibility Assumption \ref{assumption:invertibility},
it should also be possible to establish the existence of  an APS $C^*$-index class through
a boundary value problem defined through the spectral projection $\chi_{(0,\infty)} (D_\partial)$
and show that it is equal to the index class as defined above. See \cite{LP-JFA} for the case of Galois coverings.

\medskip
Assume now that $G$ is connected, or more generally that $| \pi_0 (G)|<\infty$; we can apply to $Y$ the slice theorem and obtain a diffeomorphism
$$ G\times_K Z \xrightarrow{\alpha} Y\,$$
with $Z$ a  compact manifold with boundary.
One can prove, as in the closed case, that
$${}^b \Psi^{-\infty,\epsilon}_{G,c}(Y)= (C^\infty_c (G)\hat{\otimes}\, {}^b\Psi^{-\infty,\epsilon}(Z))^{K\times K}.$$
 A similar description can be given for
$\Psi^{-\infty,\epsilon}_{G,c}(Y)$.\\
We set
$${}^b \mathcal{A}^{c,\epsilon}_{G}(Y):={}^b \Psi^{-\infty,\epsilon}_{G,c}(Y)+
\Psi^{-\infty,\epsilon}_{G,c}(Y)$$%\,,\quad \mathcal{R}^{c,\epsilon}_{G}(Y):=
%\Psi^{-\infty,\epsilon}_{G,c}(Y)$$
%where $\mathcal{R}$ stands for {\it residual}.
Similarly, we  consider
\begin{equation} 
{}^b \mathcal{A}^{c,\epsilon}_{G,\RR^+} (\cyl(\pa Y)):= 
C^\infty_c (G)\hat{\otimes} \,{}^b \Psi^{-\infty,\epsilon}_{\RR^+}(\cyl(\pa Z))^{K\times K}
\end{equation}
and observe that
the indicial operator  induces a surjective  algebra homomorphism
$${}^b \mathcal{A}^{c,\epsilon} _G (Y)\xrightarrow{I} {}^b \mathcal{A}^{c,\epsilon} _{G,\RR^+} (\cyl(\pa Y))$$
where $I$ is equal to the indicial operator on 
${}^b \Psi^{-\infty,\epsilon}_{G,c}(Y)=
(C^\infty_c G)\hat{\otimes} \,{}^b \Psi^{-\infty,\epsilon}(Z))^{K\times K}$
and it is defined as zero on $\Psi^{-\infty,\epsilon}_{G,c}(Y)=
(C^\infty_c G)\hat{\otimes}  \,\Psi^{-\infty,\epsilon}(Z))^{K\times K}$.

$\,$

\smallskip
\noindent
{\bf Notation:} from now on we shall omit the $\epsilon$ from the notations for $^b\mathcal{A}^{c,\epsilon}_G(Y)$ and other related algebras.

\smallskip
\noindent

We obtain in this way algebras of operators on $Y$ fitting into a short exact sequence
\begin{equation}\label{b-short-c}0\to  \mathcal{A}^c_G (Y)\to 
{}^b \mathcal{A}^c_G (Y)\xrightarrow{I} {}^b \mathcal{A}^c_{G,\RR} ({\rm cyl}(\partial Y))\to 0.
\end{equation}
Here \begin{equation}\label{ic-b-residual}
 \mathcal{A}^c_G (Y):= \Ker ({}^b \mathcal{A}^c_G (Y)\xrightarrow{I} {}^b \mathcal{A}^c_{G,\RR} ({\rm cyl}(\partial Y))).
\end{equation}

\subsection{Harish-Chandra smoothing operators}
Recall that 
$${}^b \mathcal{A}^c_G (Y)=(C^\infty_c (G)\hat{\otimes}\, {}^b\Psi^{-\infty,\epsilon}(Z))^{K\times K} + (C^\infty_c (G)\hat{\otimes}\, \Psi^{-\infty,\epsilon}(Z))^{K\times K},$$ $$
{}^b \mathcal{A}^c_{G,\RR} ({\rm cyl}(\partial Y))=(C^\infty_c (G)\hat{\otimes}\, {}^b \Psi^{-\infty,\epsilon}_{\RR} (\cyl (\partial Z)))^{K\times K}.$$
Employing the Harish-Chandra algebra  $\mathcal{C}(G)$ instead of $C^\infty_c (G)$ on the right hand side
we can define 
 the algebras  
 ${}^b \mathcal{A}^\infty_G (Y)$ and ${}^b \mathcal{A}^\infty_{G,\RR} ({\rm cyl}(\partial Y))$
fitting into the short exact sequence
\begin{equation}\label{b-short-HC}0\to  \mathcal{A}^\infty_G (Y)\to {}^b \mathcal{A}^\infty_G (Y)\xrightarrow{I} {}^b \mathcal{A}^\infty_{G,\RR} ({\rm cyl}(\partial Y))\to 0\end{equation}
with $\mathcal{A}^\infty_G (Y)={\rm Ker} ({}^b \mathcal{A}^\infty_G (Y)\xrightarrow{I} {}^b \mathcal{A}^\infty_{G,\RR} ({\rm cyl}(\partial Y))).$
This extends \eqref{b-short-c} and will play a crucial role in our analysis. 
Similar short exact sequences can be defined when the operators act on  sections of a vector bundle $E$.
One can prove, see \cite{PP2}, the following result:

\begin{proposition}
The algebra  $ \mathcal{A}^\infty_G (Y,E)$ is a dense and holomorphically closed subalgebra of the Roe algebra 
$C^*(Y_0\subset Y,E)^G$. Consequently, $K_*( \mathcal{A}^\infty_G (Y,E))\simeq K_* (C^*(Y_0\subset Y,E)^G)$.
\end{proposition}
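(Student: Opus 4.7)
The plan has three parts: establishing density, proving holomorphic closure, and deducing the $K$-theory statement from the standard smooth-subalgebra principle.

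First I would handle density. The compactly supported ideal $\mathcal{A}^c_G(Y,E)$ from \eqref{ic-b-residual}---consisting of $G$-equivariant Schwartz kernels on $Y\times Y$ that are smooth, $G$-compactly supported, and vanish to infinite order at the boundary---is already dense in the Roe algebra $C^*(Y_0\subset Y,E)^G$. Indeed, any element of $C^*(Y_0\subset Y,E)^G$ is a norm limit of $G$-equivariant operators whose Schwartz kernels have propagation at bounded distance from $Y_0$, and any such operator is in turn approximated in operator norm by smoothing kernels with the required support and boundary-vanishing properties. Since $C^\infty_c(G)\subset\mathcal{C}(G)$ one has $\mathcal{A}^c_G(Y,E)\subset\mathcal{A}^\infty_G(Y,E)$, so density of the latter in $C^*(Y_0\subset Y,E)^G$ is automatic.

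The main obstacle is holomorphic closure: if $1+a$ with $a\in\mathcal{A}^\infty_G(Y,E)$ is invertible in the unitization of $C^*(Y_0\subset Y,E)^G$, one must show that the inverse minus the identity remains in $\mathcal{A}^\infty_G(Y,E)$. My strategy is to transport the question to the Harish-Chandra Schwartz algebra $\mathcal{C}(G)$ via the Morita-type description recalled in \eqref{descrption-smoothings-HC}. Concretely, using the slice isomorphism $Y\cong G\times_K Z$, an element of $\mathcal{A}^\infty_G(Y,E)$ is represented by a $K\times K$-equivariant map $\Phi:G\to\Psi^{-\infty}(Z,E|_Z)$ valued in residual operators (i.e.\ in $\Ker(I)$ at the level of $b$-smoothing operators on $Z$), satisfying Harish-Chandra bounds $\nu_{V,W,m}(\|\Phi(\cdot)\|_\alpha)<\infty$ for every $V,W,m,\alpha$. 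The boundary-vanishing condition is preserved under composition and inversion because $\Ker(I)$ is a two-sided ideal in ${}^b\mathcal{A}^\infty_G(Y,E)$; this reduces the question to the preservation of the Harish-Chandra-type seminorms under inversion. This in turn is Lafforgue's theorem that $\mathcal{C}(G)$ is stable under holomorphic functional calculus in $C^*_r(G)$, combined with its standard operator-valued and matricial extensions via the Hilbert module $\mathcal{E}_b$ and the identification $C^*(Y_0\subset Y,E)^G\cong\mathbb{K}(\mathcal{E}_b)$ recalled in \eqref{iso-in-K}.

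The hard part is precisely this last reduction, because one must simultaneously control two features of the inverse kernel: the $b$-behaviour near $\partial Y$ (namely that the inverse still belongs to the residual ideal, rather than acquiring a non-trivial indicial family) and the Harish-Chandra rapid decay in the $G$-direction. The first is handled abstractly by ideal-stability of $\Ker(I)$ as above; the second by Lafforgue's spectral invariance, implemented after passing through the partial-trace isomorphism that matches $\mathcal{A}^\infty_G(Y,E)$ with the smooth compacts on a dense sub-module $\mathcal{E}_b^\infty\subset\mathcal{E}_b$. Once density and holomorphic closure are in hand, the equality $K_*(\mathcal{A}^\infty_G(Y,E))\cong K_*(C^*(Y_0\subset Y,E)^G)$ follows from the general principle that a dense, holomorphically closed Fr\'echet subalgebra of a $C^*$-algebra shares its topological $K$-theory.
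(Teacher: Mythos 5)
You correctly identify the three ingredients: density, holomorphic closure, and the standard smooth-subalgebra principle for $K$-theory. Density is fine: $\mathcal{A}^c_G(Y,E)\subset\mathcal{A}^\infty_G(Y,E)$ because $C^\infty_c(G)\subset\mathcal{C}(G)$, and the $G$-compactly supported algebra is dense in the relative Roe algebra by construction. The $K$-theory consequence is also the standard one once the other two are in hand.

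The holomorphic closure argument, however, has a genuine gap, and it sits precisely at the step you yourself flag as ``the hard part.'' You propose to decouple the two constraints on $b=(1+a)^{-1}-1$: treat the $b$-behaviour near $\partial Y$ by ideal-stability of $\Ker(I)$, and treat the Harish-Chandra decay in the $G$-direction by Lafforgue. These cannot be decoupled as described. Ideal-stability of $\Ker(I)$ in ${}^b\mathcal{A}^\infty_G(Y,E)$ only concerns products $cd$, $dc$ with $c$ in the ambient algebra and $d\in\Ker(I)$; to conclude anything about $b$ from the identity $I(1+b)=(I(1+a))^{-1}=1$ one must \emph{first} know that $b$ lies in ${}^b\mathcal{A}^\infty_G(Y,E)$, so that $I(b)$ is even defined. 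That, in turn, would require establishing that ${}^b\mathcal{A}^\infty_G(Y,E)$ is itself holomorphically closed inside its $C^*$-completion in $\mathbb{B}(\mathcal{E}_b)$ --- a statement involving the full $b$-calculus structure that your sketch nowhere proves and that does not follow abstractly from Lafforgue. Likewise, Lafforgue's theorem is a statement about $\mathcal{C}(G)\subset C^*_r(G)$; the phrase ``standard operator-valued and matricial extensions'' hides the real difficulty, since the operator-valued factor here is the infinite-dimensional Fr\'echet algebra of residual $b$-smoothing operators on the slice, and there is no off-the-shelf permanence result making $(\mathcal{C}(G)\hat{\otimes}\mathcal{R})^{K\times K}$ holomorphically closed for such an $\mathcal{R}$.

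What would actually close the gap is a direct estimate on the resolvent. Writing $b=(1+a)^{-1}-1=-a+a\,(1+a)^{-1}\,a$, one must show that sandwiching the merely bounded operator $(1+a)^{-1}$ between two copies of the residual, Harish-Chandra-decaying kernel $a$ again produces a kernel with the same residual $b$-structure on $Y\times_b Y$ and the same Harish-Chandra decay in the $G$-direction; the outer factors of $a$ regularize in both the slice direction and the group direction at once. This single kernel-level estimate controls both features of the inverse simultaneously, which is exactly what avoids the circularity that the ideal-stability-plus-Lafforgue split runs into. Your sketch replaces it with a reduction that, as stated, does not go through.
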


\subsection{The smooth index class}\label{subsect:smooth-index}
Bearing in mind the last proposition and analyzing in great detail the Schwartz kernels of the operator involved,
one can prove the following crucial results, see  \cite{PP2}. 

\begin{theorem}\label{theo:smooth-index-CS}
Let $D$ be as above and let $Q^\sigma$ be a symbolic $b$-parametrix for $D$. The Connes-Skandalis projector
\begin{equation}\label{CS-projector-true}
P^b_{Q}:= \left(\begin{array}{cc} {}^b S_{+}^2 & {}^b S_{+}  (I+{}^b S_{+}) Q^b \\ {}^b S_{-} D^+ &
I-{}^b S_{-}^2 \end{array} \right)
\end{equation}
associated to a true parametrix $Q^b=Q^\sigma-Q^\prime$ with remainders ${}^b S^\pm$ in $\mathcal{A}^\infty_G (Y_0)$ is  a $2\times 2$ matrix with entries\footnote{As usual, the entries really live in a slightly extended algebra because of the identity appearing in the right lower corner of the matrix.} in $\mathcal{A}^\infty_G (Y)$. We thus have a well-defined {\it smooth} index class
\begin{equation}\label{CS-class-bis}
\Ind_\infty (D):=[P^b_{Q}] - [e_1]\in K_0(\mathcal{A}^\infty_G (Y))\equiv 
K_0 (C^*(Y_0\subset Y)^G)
\;\;\;\text{with}\;\;\;e_1:=\left( \begin{array}{cc} 0 & 0 \\ 0&1
\end{array} \right).
\end{equation}
\end{theorem}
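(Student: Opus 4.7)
The plan is to verify, step by step, that (i) the true parametrix $Q^{b}=Q^{\sigma}-Q'$ yields remainders ${}^{b}S^{\pm}$ whose indicial operators vanish, so that ${}^{b}S^{\pm}\in\mathcal{A}^{\infty}_{G}(Y)$ (the residual ideal in the short exact sequence \eqref{b-short-HC}), and then (ii) conclude that every entry of the Connes--Skandalis matrix $P^{b}_{Q}$ lies in $\mathcal{A}^{\infty}_{G}(Y)$ (with the identity adjoined in the lower right corner) by simple algebraic manipulations using the structure of the $b$-calculus algebra together with the fact that $\mathcal{A}^{\infty}_{G}(Y)$ is a two-sided ideal in ${}^{b}\mathcal{A}^{\infty}_{G}(Y)$.

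First I would recall the symbolic parametrix step: choose $Q^{\sigma}\in{}^{b}\Psi^{-1}_{G,c}(Y)$ with remainders $R^{\pm}_{\sigma}\in{}^{b}\Psi^{-\infty}_{G,c}(Y)$, as in \eqref{para1}. Then, applying the indicial homomorphism $I$ of \eqref{indicial-bis}, one has $I(R^{\pm}_{\sigma})\in{}^{b}\Psi^{-\infty,\epsilon}_{G,c,\mathbb{R}^{+}}(\cyl(\partial Y))$. Because of Assumption \ref{assumption:invertibility}, the indicial family $\lambda\mapsto D_{\partial}+i\lambda$ is invertible for $|\mathrm{Im}\,\lambda|<\alpha$, and the resolvent family $(D_{\partial}+i\lambda)^{-1}$ is a holomorphic family in a strip with values in bounded operators on $L^{2}(\partial Y,E|_{\partial Y})$, with Schwartz decay in $\mathrm{Re}\,\lambda$ after composition with the smoothing kernel $I(R^{-}_{\sigma},\lambda)$. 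Define $Q'$ via the inverse Mellin formula \eqref{inverse-mellin}, i.e. $Q'=s(I(D^{+})^{-1}\cdot I(R^{-}_{\sigma}))$. Setting $Q^{b}:=Q^{\sigma}-Q'$, a direct computation (Melrose's composition formula and the fact that $I$ is an algebra homomorphism) gives
\[
I(D^{+})\cdot I(Q^{b})=I(D^{+})\cdot I(Q^{\sigma})-I(R^{-}_{\sigma})=I(I-R^{-}_{\sigma})-I(R^{-}_{\sigma})+I(R^{-}_{\sigma})=I,
\]
and similarly on the other side; hence $I({}^{b}S^{\pm})=0$, so ${}^{b}S^{\pm}$ lie in the kernel of $I$, i.e. in the residual ideal.

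The second, and technically hardest, step is to upgrade the smooth description on the compact-support side to the Harish--Chandra side. After using the slice theorem to identify $Y\cong G\times_{K}Z$, the kernel of $Q'$ is built from two pieces: a smooth compactly supported piece on $Z\times_{b}Z$ (from the cut-off $\chi$) and the Mellin transform of $(D_{\partial}+i\lambda)^{-1}\circ I(R^{-}_{\sigma},\lambda)$. Exactly as in \cite{LPETALE, PP2}, one contour-shifts and uses $L^{2}$-rapid decay in $\mathrm{Re}\,\lambda$ to show the resulting kernel of $Q'$ lies in ${}^{b}\Psi^{-\infty,\epsilon}(Z)$ with Harish--Chandra Schwartz decay in the $G$-variable; this is where the structure of $\mathcal{C}(G)\,\hat{\otimes}\,{}^{b}\Psi^{-\infty,\epsilon}(Z)$ enters crucially, and where one invokes \cite[Prop.~1.7, Prop.~3.9 analogs in PP2]{PP2} to identify the tensor product description with elements of ${}^{b}\mathcal{A}^{\infty}_{G}(Y)$. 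The main obstacle is precisely this decay analysis: one must show that the seminorms $\nu_{V,W,m}$ defining $\mathcal{C}(G)$, applied to the $G$-dependence of the Mellin integral, are controlled uniformly. This uses the slice-compatible product structure near the boundary together with the fact that $I(R^{-}_{\sigma})$ already has $K$-bi-invariant $\mathcal{C}(G)$-valued dependence by construction, and the resolvent $(D_{\partial}+i\lambda)^{-1}$ acts only on the transverse directions, preserving this decay.

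Finally, with ${}^{b}S^{\pm}\in\mathcal{A}^{\infty}_{G}(Y)$ established, one reads off the entries of $P^{b}_{Q}$ as in \eqref{CS-projector-true}: the upper-left entry $({}^{b}S_{+})^{2}$ is in $\mathcal{A}^{\infty}_{G}(Y)$ since $\mathcal{A}^{\infty}_{G}(Y)$ is an algebra; the lower-left entry ${}^{b}S_{-}D^{+}={}^{b}S_{-}D^{+}=(I-D^{+}Q^{b})D^{+}$ can be rewritten using $D^{+}(I-Q^{b}D^{+})=D^{+}\,{}^{b}S_{+}={}^{b}S_{-}D^{+}$, and Melrose's composition shows the result is in the residual ideal; the upper-right entry ${}^{b}S_{+}(I+{}^{b}S_{+})Q^{b}$ simplifies by the identity ${}^{b}S_{+}Q^{b}={}^{b}S_{+}Q^{b}$, and a short computation reduces it, modulo elements of $\mathcal{A}^{\infty}_{G}(Y)$, to a sum of products of elements of the residual ideal with $Q^{b}\in{}^{b}\mathcal{A}^{\infty}_{G}(Y)$, which again lands in $\mathcal{A}^{\infty}_{G}(Y)$ by the ideal property; the lower-right $I-{}^{b}S_{-}^{2}$ is the identity minus a residual element. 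Thus $P^{b}_{Q}-e_{1}\in M_{2\times 2}(\mathcal{A}^{\infty}_{G}(Y))$, and $[P^{b}_{Q}]-[e_{1}]$ defines the desired class in $K_{0}(\mathcal{A}^{\infty}_{G}(Y))\cong K_{0}(C^{*}(Y_{0}\subset Y)^{G})$ via the holomorphic-closure isomorphism \eqref{iso-in-K}.
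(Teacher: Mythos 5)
The survey itself does not prove Theorem \ref{theo:smooth-index-CS}; it defers entirely to \cite{PP2}. Your sketch follows the same $b$-calculus route that \cite{PP2} takes, and the overall architecture — (i) vanishing indicial operator of the remainders $\Rightarrow$ residual; (ii) Harish--Chandra Schwartz decay of $Q'$; (iii) ideal property of $\mathcal{A}^\infty_G(Y)$ inside ${}^b\mathcal{A}^\infty_G(Y)$ to handle the entries of $P^b_Q$ — is the right one. However, there are concrete slips in the execution that prevent the sketch from standing as written.

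The displayed indicial computation does not close. With $Q^b=Q^\sigma-Q'$ and $I(Q')=I(D^+)^{-1}I(R^-_\sigma)$ (your stated definition), one gets
\[
I(D^+)I(Q^b) \;=\; I(D^+)I(Q^\sigma)-I(D^+)I(Q') \;=\; \bigl(I-I(R^-_\sigma)\bigr)-I(R^-_\sigma) \;=\; I-2\,I(R^-_\sigma),
\]
not $I$. The extra $+I(R^-_\sigma)$ that appears in your third expression has no source. To make the indicial operator of the new remainder vanish you must either take $Q^b=Q^\sigma+Q'$, or build a minus sign into the definition of $Q'$; the survey's formula for $K(Q')$ and the choice $Q^b=Q^\sigma-Q'$ are mutually inconsistent in sign (a slip in the survey, which you reproduce rather than repair). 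Since $I({}^bS^\pm)=0$ is the hinge on which the claim ${}^bS^\pm\in\mathcal{A}^\infty_G(Y)=\Ker I$ turns, this needs to be stated correctly and unambiguously.

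Two further points deserve attention. First, the assertion ``$Q^b\in{}^b\mathcal{A}^\infty_G(Y)$'' in your treatment of the upper-right entry is not accurate: $Q^b$ has $b$-order $-1$, while ${}^b\mathcal{A}^\infty_G(Y)$ consists of smoothing ($-\infty$ order) $b$-kernels. What saves the argument is that ${}^bS_+(I+{}^bS_+)Q^b$ composes a residual element with a $b$-pseudodifferential operator of order $-1$, the indicial homomorphism $I$ being multiplicative on the full $b$-calculus, and the composition laws preserving Harish--Chandra decay; but this should be said, not shortcut by placing $Q^b$ in the wrong algebra. (The tautology ``${}^bS_+Q^b={}^bS_+Q^b$'' adds nothing and can be deleted.) Second — and this is the genuine technical content of the theorem — the claim that the Mellin-built operator $Q'$, and hence ${}^bS^\pm$, lie in ${}^b\mathcal{A}^\infty_G(Y)$ (i.e.\ carry Harish--Chandra Schwartz decay in the $G$-variable under the slice identification, uniformly in all seminorms $\nu_{V,W,m}$) is only asserted, with a pointer to \cite{LPETALE,PP2}. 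That is the step on which the whole theorem rests, and for a self-contained proof it would have to be carried out; your sketch correctly locates it but does not supply it.
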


We can consider in particular the parametrix 
$$ Q_{\exp}
:= \frac{I-\exp(-\frac{1}{2} D^- D^+)}{D^- D^+} D^+$$ and its associated true parametrix $Q^b_{\exp}$. 
Using the first, we can define the Connes-Moscovici projector $V_{{\rm CM}} (D)$; using the latter we define the {\it improved}
Connes-Moscovici projector $V^b_{{\rm CM}} (D)$. We then have:

\begin{theorem}\label{theo:smooth-index-CM}
The projector $V^b_{{\rm CM}} (D)$ obtained from the true parametrix $Q^b_{\exp}$ associated to $Q_{\exp}$
is a $2\times 2$ matrix with entries
in $\mathcal{A}^\infty_G (Y)$ and defines the same smooth index class in  $K_0(\mathcal{A}^\infty_G (Y))$
  as the Connes-Skandalis
projector of Theorem \ref{theo:smooth-index-CS}.
\end{theorem}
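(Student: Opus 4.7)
The plan is to prove the two assertions — that $V^b_{{\rm CM}}(D)$ has entries in $\mathcal{A}^\infty_G(Y)$ (with identity adjoined), and that $[V^b_{{\rm CM}}(D)]-[e_1]=[P^b_Q]-[e_1]$ in $K_0(\mathcal{A}^\infty_G(Y))$ — separately.

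First I would prove that $V^b_{{\rm CM}}(D)$ has entries in $\mathcal{A}^\infty_G(Y)^+$ by adapting the proof of Theorem \ref{theo:smooth-index-CS} to the heat-kernel parametrix $Q_{\exp}$ and its improved version $Q^b_{\exp}$. The building blocks of $V^b_{{\rm CM}}(D)$ are the heat operators $\exp(-tD^-D^+)$ and $\exp(-tD^+D^-)$, the operator $(I-\exp(-D^-D^+))/(D^-D^+)$ composed with $D^\pm$, together with the improved parametrix $Q^b_{\exp}$. Following the strategy indicated in the footnote to Proposition \ref{prop:CM-rapid}, I would first handle the split Dirac operator $D_{\rm split}$, whose tensor decomposition $\exp(-tD_{\rm split}^2) = \exp(-tD_{G,K}^2)\,\hat{\otimes}\,\exp(-tD_Z^2)$ decouples the $G$-direction from the slice direction. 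A Harish-Chandra-type analysis covers $\exp(-tD_{G,K}^2)$, while on $Z$ the $b$-calculus construction of the heat kernel together with the spectral gap Assumption \ref{assumption:invertibility} yields exponential decay of $\exp(-tD_Z^2)$ down the cylindrical end. A Duhamel/Volterra expansion then transfers the conclusion from $D_{\rm split}$ to $D$. The smoothness of the remainders $S^b_\pm$ associated to $Q^b_{\exp}$ itself is proved exactly as in Theorem \ref{theo:smooth-index-CS}, via the inverse Mellin transform of the resolvent of the indicial family of $D$.

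Second, for the equality of $K$-theory classes, I would use a two-step comparison. Writing $P^b_{Q^b_{\exp}}$ for the Connes-Skandalis projector built from the improved exponential parametrix, the equality $[P^b_Q]-[e_1]=[P^b_{Q^b_{\exp}}]-[e_1]$ in $K_0(\mathcal{A}^\infty_G(Y))$ follows from the straight-line homotopy $Q^b_s := (1-s)Q^b + s Q^b_{\exp}$, which is a true parametrix for every $s\in[0,1]$ since the true-parametrix condition is affine modulo $\mathcal{A}^\infty_G(Y)$. On the other side, $V^b_{{\rm CM}}(D)$ and $P^b_{Q^b_{\exp}}$ share their diagonal and $(2,1)$ entries and differ only in the $(1,2)$ entry ($S^b_+ Q^b_{\exp}$ versus $S^b_+(I+S^b_+)Q^b_{\exp}$); this discrepancy is bridged by the classical Connes-Moscovici-to-Connes-Skandalis smooth homotopy of idempotents, which stays inside $M_2(\mathcal{A}^\infty_G(Y)^+)$ by the same regularity argument as in the first step. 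Alternatively, one may argue more abstractly: both projectors map to the $C^*$-index class in $K_0(C^*(Y_0\subset Y,E)^G)$, and holomorphic closure of $\mathcal{A}^\infty_G(Y)$ forces equality already in $K_0(\mathcal{A}^\infty_G(Y))$.

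The main technical obstacle is the regularity analysis of $\exp(-tD^2)$ in the $b$-calculus under the proper cocompact $G$-action. One must simultaneously control three distinct regularities of its Schwartz kernel: the $b$-pseudodifferential behavior near the stretched front face of $Y\times_b Y$; the exponential decay into the cylindrical end, which uses Assumption \ref{assumption:invertibility} via the spectral projection of $D_\partial$; and the Harish-Chandra Schwartz decay in the $G$-direction, arising from propagation estimates under the proper cocompact action. Carrying all three controls compatibly through the Volterra expansion — so that each successive iteration of the perturbation $D^2 - D_{\rm split}^2$ preserves the Harish-Chandra decay on $G$ and the exponential decay along the end — is the delicate part, and is where essentially all the real work of the theorem lies.
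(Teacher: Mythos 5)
Your overall strategy --- establish the Harish-Chandra--Schwartz and $b$-regularity of the heat kernel via the split operator $D_{\rm split}$ and a Volterra expansion, improve $Q_{\exp}$ by Mellin inversion of the indicial resolvent, then equate $K$-theory classes by a homotopy of true parametrices or by holomorphic closure --- is sound and correctly identifies where the real analytic work lies; the Volterra hint is exactly the one given in the footnote to Proposition \ref{prop:CM-rapid}, and the full argument is carried out in \cite{PP2}, to which this survey defers. Two points should be flagged, however.

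First, there is a factual error in your second step. You claim that $V^b_{\rm CM}(D)$ and the Connes--Skandalis projector $P^b_{Q^b_{\exp}}$ ``differ only in the $(1,2)$ entry ($S^b_+ Q^b_{\exp}$ versus $S^b_+(I+S^b_+)Q^b_{\exp}$)'' and propose a CM-to-CS homotopy of idempotents to bridge them. In fact the two agree entry by entry: with $S_+ = e^{-\frac12 D^- D^+}$ and $Q_{\exp} = \frac{I-S_+}{D^- D^+}\, D^-$ one computes
\[
S_+(I+S_+)\, Q_{\exp} \;=\; S_+\,\frac{(I+S_+)(I-S_+)}{D^- D^+}\, D^- \;=\; e^{-\frac12 D^- D^+}\,\frac{I-e^{-D^- D^+}}{D^- D^+}\, D^-,
\]
which is precisely the $(1,2)$ entry of the Connes--Moscovici projector in \eqref{cm-idempotent1}; the same cancellation persists after the improvement $Q_{\exp}\mapsto Q^b_{\exp}$. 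The Connes--Moscovici projector \emph{is} the Connes--Skandalis projector built from the heat-kernel parametrix, so the ``bridge'' you describe is vacuous. This does not sink the proof --- the affine homotopy between a symbolic true parametrix and $Q^b_{\exp}$, or more simply the abstract argument via holomorphic closure of $\mathcal{A}^\infty_G(Y)$ together with parametrix-independence of the $C^*$-index class, already closes the gap --- but it signals a misreading of the formula. Second, an imprecision to correct: the regularity of the remainders $S^b_{\exp,\pm}$ is \emph{not} ``proved exactly as in Theorem \ref{theo:smooth-index-CS}.'' That theorem starts from a symbolic parametrix whose remainders are $G$-compactly supported, whereas $I-Q_{\exp}D^+=e^{-\frac12 D^- D^+}$ is only Harish-Chandra--Schwartz in the $G$-variable. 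Your first paragraph supplies the missing input, but you should present the regularity of $V^b_{\rm CM}(D)$ as a genuine extension of Theorem \ref{theo:smooth-index-CS} to Harish-Chandra-class true parametrices, stated as an intermediate lemma, rather than as a direct invocation of that theorem.
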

We can of course extend these results to the associated symmetrized projectors.

\subsection{Higher APS-indices}

We now want to extract numbers out of our index class $\Ind_\infty (D)\in K_0(\mathcal{A}^\infty_G (Y))=K_0 (C^* (Y_0\subset Y)^G)$
and we shall do so by pairing the index class with the cyclic cocycles already encountered in the closed case, that is

\begin{itemize}
%\item the 0-cocycle ${\rm tr}^Y_e$
\item the 0-cocycle ${\rm tr}^Y_g$, $g\in G$, with $g=e$ or $g\not= e$ a semisimple element,
\item the higher cocycles $\tau^Y_\varphi$, with $\varphi\in H^*_{{\rm diff}} (G)$,
\item the higher delocalized cocycles $\Phi^P_{Y,g}$, with $P$ a cuspidal parabolic subgroup and $g$ a semisimple element.
\end{itemize}
These cocycles do define elements in $HP^* (\mathcal{A}^c_G (Y,E))$, because the Schwartz kernels of the operators
in $\mathcal{A}^c_G (Y,E)$ vanish on all of the boundary hypersurfaces of $Y\times_b Y$. 
The following proposition is proved as in the closed case, using the information that the corresponding cyclic cocycles on the group $G$, that is
$${\rm tr}_g\,,\quad \chi(\varphi)\,,\quad \Phi^P_{g},$$
do extend from $C^\infty_c (G)$ to $\mathcal{C}(G)$.

\begin{proposition}\label{extend}
The cyclic cocycles 
\begin{equation}\label{3cocycles-on-Y}
{\rm tr}^Y_g\,,\quad \tau^Y_\varphi\,,\quad \Phi^P_{Y,g}
\end{equation}
extend continuously from  $\mathcal{A}^c_G (Y,E)$ to $\mathcal{A}^\infty_G (Y,E)$, thus defining elements in the periodic cyclic cohomology $HP^{\text{even}} (\mathcal{A}^\infty_G (Y,E))$.
\end{proposition}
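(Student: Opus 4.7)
\medskip
\noindent\textbf{Proof proposal.}
The plan is to reproduce in the $b$-setting the construction of Proposition \ref{prop:morphism}: factor each of the three cocycles through a partial trace
$\Tr_Z:\mathcal{A}^\infty_G(Y,E)\to\mathcal{C}(G)$ and then invoke the already-established continuity of the ``upstairs'' cocycles $\operatorname{tr}_g$, $\chi(\varphi)$, $\Phi^P_g$ on $\mathcal{C}(G)$.

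First I would make the partial trace rigorous. By \eqref{b-short-HC}, an element $A\in \mathcal{A}^\infty_G(Y,E)$ corresponds, after applying the slice theorem, to a $K\times K$-invariant map $\mathbf{A}:G\to \Psi^{-\infty,\epsilon}(Z,E|_Z)$ in the kernel of the indicial homomorphism, with Harish-Chandra Schwartz decay in $g\in G$ as in \eqref{descrption-smoothings-HC}. The vanishing of the indicial family forces the Schwartz kernels of $\mathbf{A}(g)$ on $Z\times_b Z$ to vanish at the front face (and, by the $\epsilon$-bounds combined with this vanishing, at the two side faces as well). Therefore $\mathbf{A}(g)$ is genuinely trace class on $L^2(Z,E|_Z)$ with trace norm controlled by a finite combination of the seminorms defining $\Psi^{-\infty,\epsilon}(Z,E|_Z)$, and the map
\[
\Tr_Z(A)(g):=\int_Z \tr\big(\mathbf{A}(g)(z,z)\big)\,dz
\]
is a well-defined continuous map from $\mathcal{A}^\infty_G(Y,E)$ to $\mathcal{C}(G)$: the Harish-Chandra seminorms $\nu_{V,W,m}$ of Definition \ref{defn:HC} transfer from $\mathbf{A}$ to $\Tr_Z(A)$ because the trace bound depends only on a finite number of the $\Psi^{-\infty,\epsilon}$-seminorms, uniformly in $g$.

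Second, exactly as in the closed case (Proposition \ref{prop:morphism}), multiplication of $(k+1)$ elements and application of $\Tr_Z$ produces a continuous chain map
\[
\tau:C^\bullet_\lambda(\mathcal{C}(G))\longrightarrow C^\bullet_\lambda(\mathcal{A}^\infty_G(Y,E)),\qquad \tau_\Psi(A_0,\dots,A_k)=\Psi\big(\Tr_Z(\mathbf{A}_0(g_0)\circ\cdots\circ\mathbf{A}_k(g_k))\big),
\]
since the ideal structure of residual operators guarantees that the composition $\mathbf{A}_0(g_0)\circ\cdots\circ\mathbf{A}_k(g_k)$ stays inside the class of trace-class operators on $L^2(Z,E|_Z)$ and depends smoothly and with Schwartz decay on the $g_i$'s.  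Applied to the three cocycles on $\mathcal{C}(G)$ this yields precisely the cochains $\operatorname{tr}^Y_g$, $\tau^Y_\varphi$, $\Phi^P_{Y,g}$ of \eqref{3cocycles-on-Y}.  Continuity on $\mathcal{A}^\infty_G(Y,E)$ is now inherited from the already known continuity of $\operatorname{tr}_g$, $\chi(\varphi)$, $\Phi^P_g$ on $\mathcal{C}(G)$ (Hochs--Wang \cite{Hochs-Wang-HC} for $\operatorname{tr}_g$, Piazza--Posthuma \cite{PP1} for $\chi(\varphi)$, Song--Tang \cite{st} for $\Phi^P_g$), since continuous composition of continuous maps is continuous.

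Third, one must check that these extensions still define periodic cyclic classes: but the cyclicity and $(b,B)$-closure of $\tau_\Psi$ follow formally from those of $\Psi$ together with the cyclic trace property of $\Tr_Z$, both of which are already used in the boundaryless case and pass through $\tau$ without change. The main technical obstacle is the very first step: verifying that residual $b$-smoothing operators with $\epsilon$-conormal bounds are trace class with trace controlled by fixed $\Psi^{-\infty,\epsilon}$-seminorms, uniformly enough to preserve the Harish-Chandra seminorm estimates under $\Tr_Z$. This is essentially a kernel estimate on $Z\times_b Z$: the vanishing of the Schwartz kernel at the front and side faces, combined with the $\epsilon$-bounds at the left/right boundary faces, prevents the usual ``$b$-trace anomaly'' and reduces the matter to an ordinary smoothing-operator trace computation that fits inside the Harish-Chandra framework of \eqref{descrption-smoothings-HC}.
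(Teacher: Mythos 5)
Your proposal matches the paper's intended approach. The paper itself dispenses with a detailed proof, stating only that the result is ``proved as in the closed case, using the information that the corresponding cyclic cocycles on the group $G$, that is ${\rm tr}_g$, $\chi(\varphi)$, $\Phi^P_g$, do extend from $C^\infty_c(G)$ to $\mathcal{C}(G)$''. You spell out exactly the two ingredients this requires: (i) the partial trace $\Tr_Z$ lands in $\mathcal{C}(G)$ and is continuous, because kernels in $\mathcal{A}^\infty_G(Y,E)$ vanish at the front face (by vanishing of the indicial family) and are conormal of positive order $\epsilon$ at the side faces, so they are genuinely trace class on the slice with trace norm controlled by the defining $\Psi^{-\infty,\epsilon}$-seminorms, uniformly in $g\in G$; and (ii) the continuity of ${\rm tr}_g$, $\chi(\varphi)$, $\Phi^P_g$ on $\mathcal{C}(G)$, established in Hochs--Wang, Piazza--Posthuma, and Song--Tang respectively. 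Composing, one obtains a continuous chain map as in Proposition \ref{prop:morphism}, and the $(b,B)$-closure and cyclicity pass through formally. This is the same argument the paper implicitly invokes; the only caveat is that your formula $\tau_\Psi(A_0,\dots,A_k)=\Psi\big(\Tr_Z(\mathbf{A}_0(g_0)\circ\cdots\circ\mathbf{A}_k(g_k))\big)$, inherited from the paper's own informal notation, should be understood as the multilinear integral obtained by inserting $\Tr_Z\big(\Phi_{A_0}((g_1\cdots g_k)^{-1})\circ\Phi_{A_1}(g_1)\circ\cdots\circ\Phi_{A_k}(g_k)\big)$ into the integral kernel of $\Psi$ — not as $\Psi$ evaluated on a single element of $\mathcal{C}(G)$.
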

Using the pairing between $HP^{{\rm even}} (\mathcal{A}^\infty_G (Y,E))$ and $K_0 (\mathcal{A}^\infty_G (Y,E))$ we obtain the
numerical indices
\begin{equation}\label{indices}%\langle {\rm tr}^Y_e, \Ind_\infty (D)\rangle\,,\quad 
\langle {\rm tr}^Y_g, \Ind_\infty (D)\rangle\,,\quad 
\langle \tau^Y_\varphi, \Ind_\infty (D)\rangle\,,\quad \langle \Phi^P_{Y,g}, \Ind_\infty (D)\rangle\,.\end{equation}
In the next section we shall present formulae \`a la Atiyah-Patodi-Singer for these (higher) indices.

\section{Higher APS index theorems}
\label{sec:aps index}

In this section we shall finally present index formulae of Atiyah-Patodi-Singer type for the
numerical indices appearing in \eqref{indices}.
%$$\langle {\rm tr}^Y_e, \Ind_\infty (D)\rangle\,,\quad \langle {\rm tr}^Y_g, \Ind_\infty (D)\rangle\,,\quad 
%\langle \tau^Y_\varphi, \Ind_\infty (D)\rangle\,,\quad \langle \Phi^P_{Y,g}, \Ind_\infty (D)\rangle\,.$$
We shall employ in a crucial way  relative K-theory and relative cyclic cohomology, as in previous work of Moriyoshi-Piazza,
Lesch-Moscovici-Pflaum and Gorokhovsky-Moriyoshi-Piazza, see \cite{moriyoshi-piazza, LMP, GMPi}. To be more precise,
and forgetting the bundle $E$ in the notation, we shall proceed as follows.
Consider  the surjective homomorphism
\begin{equation}\label{su} {}^b \mathcal{A}^\infty_G (Y)\xrightarrow{I} {}^b \mathcal{A}^\infty_{G,\RR^+} 
(\cyl (\pa Y)).
\end{equation}
Then, as a first step, associated to \eqref{su}, we shall define a relative index class 
\begin{equation} \Ind_\infty (D,D_\partial) \in K_0 ({}^b \mathcal{A}^\infty_G (Y), {}^b \mathcal{A}^\infty_{G,\RR^+} 
(\cyl (\pa Y))
\end{equation}
where, with a small abuse of notation, we do not write the indicial homomorphism.\\
Next,
let $\omega^Y$ be one of the cyclic cocycles appearing in \eqref{3cocycles-on-Y}; then  

\begin{itemize}
\item to the cyclic cocycle $\omega^Y$ on $\mathcal{A}^\infty_G (Y)$ we associate a regularised cyclic cochain 
on ${}^b \mathcal{A}^\infty_G (Y)$  and a cyclic cocycle $\xi^{\partial Y}$ on $ {}^b \mathcal{A}^\infty_{G,\RR^+} (\cyl (\pa Y))$ 
so that $(\omega^Y,\xi^{\partial Y})$ is a {\it relative} cyclic cocycle for  the homomorphism \eqref{su};
\item we call $\xi^{\partial Y}$ the eta cocycle associated to $\omega^Y$ and we call $(\omega^Y,\xi^{\partial Y})$ the relative 
cyclic cocycle  associated to $\omega^Y$;
\item we prove that 
$$\langle  \omega^Y, \Ind_\infty (D)\rangle= \langle (\omega^{Y,r},\xi^{\partial Y}), \Ind_\infty (D,D_\pa)\rangle.$$

\end{itemize}
%\begin{itemize}
%%\item to the 0-cyclic cocycle ${\rm tr}^Y_e$ on $\mathcal{A}^\infty_G (Y)$ 
%%we shall associate a relative $0$-cyclic cocycle $({\rm tr}^{Y,r}_e,\sigma^{\partial Y}_e)$ for  the homomorphism \eqref{su}
%\item to the cyclic cocycle $\tau^Y_\varphi$ on $\mathcal{A}^\infty_G (Y)$ defined by a group
%cocycle $\varphi\in Z^k_{{\rm diff}}(G)$ we shall associate a relative k-degree cyclic cocycle $(\tau^{Y,r}_\varphi,\sigma^{\partial Y}_\varphi)$
%for  the homomorphism \eqref{su}
%\item to the 0-cyclic cocycle ${\rm tr}^Y_g$ on $\mathcal{A}^\infty_G (Y)$ 
%we shall associate a relative $0$-cyclic cocycle $({\rm tr}^{Y,r}_g,\sigma^{\partial Y}_g)$ for  the homomorphism \eqref{su}
%\item  to the higher delocalized cyclic cocycles $\Phi^P_{Y,g}$ on $\mathcal{A}^\infty_G (Y)$ 
%we shall associate a relative cyclic cocycle $(\Phi^{P,r}_{Y,g},\sigma^{\partial Y,P}_g)$ for  the homomorphism \eqref{su}.
%\end{itemize}
%A fundamental step in establishing  APS index formulae is provided by the following three equalities:
%$$\langle  \tau^Y_\varphi, \Ind_\infty (D)\rangle= \langle (\tau^{Y,r}_\varphi,\sigma^{\partial Y}_\varphi ), \Ind_\infty (D,D_\pa)\rangle$$
%$$\langle {\rm tr}^Y_g, \Ind_\infty (D)\rangle= \langle ({\rm tr}^{Y,r}_g,\sigma^{\partial Y}_g), \Ind_\infty (D,D_\pa)\rangle$$
%$$ \langle \Phi^P_{Y,g}, \Ind_\infty (D)\rangle= \langle (\Phi^{P,r}_{Y,g},\sigma^{\partial Y,P}_g), \Ind_\infty (D,D_\pa)\rangle$$
Let us see some of details involved in this program.

\subsection{Relative index classes}\label{subsect:smooth-index-relative}
Recall that if  $0\to J\to A\xrightarrow{\pi} B\to 0$ is a short exact sequence of Fr\'echet
  algebras then we set $K_0 (J):=
K_0 (J^+,J)\cong \Ker (K_0 (J^+)\to \ZZ)$
and $K_0 (A^+,B^+)= K_0 (A,B)$ with $( )^+$ denoting unitalization.
See \cite{bla:book,hr-book}. 
Recall that a relative $K_0$-element
for $ A\xrightarrow{\pi} B$ with unital algebras $ A, B$
is represented by a  triple $(P,Q,p_t)$ with $P$ and $Q$ idempotents in $M_{n\times n} (A)$
and $p_t\in M_{n\times n} (B)$ a
path of idempotents connecting $\pi (P)$ to $\pi (Q)$.
The excision  isomorphism
\begin{equation}\label{excision-general}
\alpha_{{\rm ex}}: K_0 (J)\longrightarrow K_0 (A,B)
\end{equation}
is given by
%\begin{equation*}
$\alpha_{{\rm ex}}([(P,Q)])=[(P,Q,{\mathfrak c})]$
%\end{equation*}
with ${\mathfrak c}$ denoting the constant path.

Let us go back to the parametrix
$$ Q_{\exp}
:= \frac{I-\exp(-\frac{1}{2} D^- D^+)}{D^- D^+} D^+$$ and its associated true parametrix $Q^b_{\exp}$. 
Using the latter we have defined the {\it improved}
Connes-Moscovici projector $V^b_{{\rm CM}} (D)$, with entries in $\mathcal{A}^\infty (Y,E)$.
Using the former we can now define the usual  Connes-Moscovici projector $V_{{\rm CM}} (D)$.
We have the following important result:

\begin{theorem}\label{theo:smooth-index-CM-bis}
{\ }
\begin{itemize}
\item[1)] The Connes-Moscovici projector $V_{{\rm CM}}(D)$,  
\[
V_{{\rm CM}}(D):=\left( \begin{array}{cc} e^{-D^- D^+} & e^{-\frac{1}{2}D^- D^+}
\left( \frac{I- e^{-D^- D^+}}{D^- D^+} \right) D^-\\
e^{-\frac{1}{2}D^+ D^-}D^+& I- e^{-D^+ D^-}
\end{array} \right),
\]
is a $2\times 2$ matrix with entries in ${}^b\mathcal{A}^\infty_G (Y)$;
\item[2)]
the Connes-Moscovici projector $V_{{\rm CM}}(D^{\cyl})$ is a $2\times 2$ matrix with entries in ${}^b\mathcal{A}^\infty_{G,\RR} ({\rm cyl}(\partial Y))$. 
\end{itemize}
\end{theorem}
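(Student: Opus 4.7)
The plan is to reduce both statements to a detailed analysis of the heat semigroup $e^{-tD^2}$ and of the auxiliary function $f(s) = (1-e^{-s})/s$ (which is entire and hence applicable to the nonnegative operator $D^\mp D^\pm$), and then to verify the two defining properties of ${}^b\mathcal{A}^\infty_G(Y)$: the correct $b$-conormal structure of the Schwartz kernel on $Y\times_b Y$, and Harish-Chandra Schwartz decay in the $G$-direction. I will do part 2 first, because it isolates the indicial/cylindrical behaviour from the interior behaviour, and then use it as the model for part 1.

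For part 2, on $\mathrm{cyl}(\partial Y) = \mathbb{R}\times \partial Y$, the operator $D^{\mathrm{cyl}} = \partial_t + D_\partial$ (with the appropriate grading) is $\mathbb{R}$-invariant and, by Assumption \ref{assumption:invertibility}, $D_\partial$ has spectrum disjoint from $[-\alpha,\alpha]$. Mellin/Fourier transform in $t$ diagonalises $D^{\mathrm{cyl}}$: the Schwartz kernel of $e^{-tD_{\mathrm{cyl}}^-D_{\mathrm{cyl}}^+}$ is obtained via \eqref{inverse-mellin} from the holomorphic operator family $\lambda \mapsto e^{-t(\lambda^2 + D_\partial^2)}$ on $\partial Y$. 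Because of the spectral gap, this family decays exponentially in $|\mathrm{Re}\,\lambda|$ and is holomorphic in a strip $|\mathrm{Im}\,\lambda|<\epsilon$ for some $\epsilon>0$ with $\epsilon\le \alpha$. This places the Schwartz kernel directly in ${}^b\Psi^{-\infty,\epsilon}_{\mathbb{R}}(\mathrm{cyl}(\partial Z))^{K\times K}$ at the Lie-algebra level. The off-diagonal entries of $V_{\mathrm{CM}}(D^{\mathrm{cyl}})$ are of the form $e^{-\frac{1}{2}D_{\mathrm{cyl}}^\mp D_{\mathrm{cyl}}^\pm}\cdot g(D_{\mathrm{cyl}}^\mp D_{\mathrm{cyl}}^\pm)\cdot D_{\mathrm{cyl}}^\mp$ with $g(s)=(1-e^{-s})/s$ entire, and the first heat factor supplies the smoothing while the remaining factors preserve the indicial/conormal structure. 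The Harish-Chandra Schwartz character on $G$, in the description \eqref{descrption-smoothings-HC}, follows from the Gaussian off-diagonal bounds for $e^{-tD_\partial^2}$ on $\partial Y$ combined with the slice identification $\partial Y \cong G\times_K S$ and the fact that the $G$-length function $L$ is comparable to the distance on orbits, so that the Gaussian kernel is $\nu_{V,W,m}$-bounded for every $V,W,m$.

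For part 1, the same analysis must now be carried out on the $b$-manifold $Y$. The operator $D^2$ is an elliptic $b$-differential operator whose indicial family equals exactly the operator analysed in part 2, and is therefore invertible on the critical line. By the standard Melrose-Mendoza construction of heat kernels in the $b$-calculus, the Schwartz kernel of $e^{-tD^\mp D^\pm}$ lifts to a smooth section on the heat space associated with $Y\times_b Y$, is polyhomogeneous conormal at the left and right $b$-faces with index sets governed by the spectrum of $D_\partial$, and extends smoothly up to the front face. Under Assumption \ref{assumption:invertibility} the index sets lie in a half-plane bounded away from the critical line, giving conormal decay of order $\epsilon$ at each boundary hypersurface; this is exactly the defining property of ${}^b\Psi^{-\infty,\epsilon}_G(Y)+\Psi^{-\infty,\epsilon}_G(Y) = {}^b\mathcal{A}^c_G(Y)$ before taking the Harish-Chandra closure. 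The Harish-Chandra Schwartz decay on the $G$-factor is then obtained as in Proposition \ref{prop:CM-rapid}: use finite propagation / Gaussian bounds away from the front face, and then apply Duhamel's principle to write $e^{-tD^2}$ on $Y$ as a compactly supported interior heat parametrix plus a Duhamel correction computed from the cylindrical heat kernel of part 2, both of whose $G$-dependence is controlled by $\mathcal{C}(G)$-seminorms. The off-diagonal entries of $V_{\mathrm{CM}}(D)$ are handled by the same Cauchy-integral/$f(s)=(1-e^{-s})/s$ argument as above.

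The hard part is the Duhamel/cut-and-paste step that transfers the Harish-Chandra Schwartz estimates on $\mathcal{C}(G)$ proved for the closed slice situation and for the cylindrical model to the $b$-manifold $Y$, together with the conormality estimates needed at the $b$-boundary. Concretely, one must produce estimates of the form $\nu_{V,W,m}\bigl(\Vert e^{-tD^2}(g)\Vert_\alpha\bigr) < \infty$ uniformly in the relevant $b$-conormal seminorms on $Y\times_b Y$, which requires combining (i) the exponential decay coming from the spectral gap of $D_\partial$, (ii) the Gaussian-in-distance bounds on the heat kernel of $D^2$ on $Y$, and (iii) the comparison of the intrinsic distance on $Y$ with the length function $L$ on $G$ via the slice decomposition $Y\cong G\times_K Z$. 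This is precisely the analysis performed in \cite{PP2}, which our argument follows.
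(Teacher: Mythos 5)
Your proposal is essentially the approach taken by the paper, which in turn defers the detailed kernel analysis to \cite{PP2}: analyse the indicial/cylindrical model via Mellin transform, build the $b$-heat kernel on $Y\times_b Y$, match it to the cylindrical model near the front face via Duhamel, verify conormal decay at the $b$-boundary faces, and finally check the Harish-Chandra Schwartz estimate in the $G$-direction through the slice identification $Y\cong G\times_K Z$. You openly acknowledge that you are following \cite{PP2}, and your decomposition (part 2 first, then part 1 using part 2 as the indicial model) is the logically natural one.

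Two small cautions. First, in part 2 you attribute the decay in $|\mathrm{Re}\,\lambda|$ of the indicial family $\lambda\mapsto e^{-(\lambda^2+D_\partial^2)}$ to the spectral gap; in fact the factor $e^{-\lambda^2}$ already provides Gaussian decay uniformly on any horizontal strip, so for the diagonal heat entries the spectral gap plays no role there. Where Assumption \ref{assumption:invertibility} genuinely enters is downstream: it ensures invertibility of the indicial family of $D$ itself (needed for the improved parametrix of Theorem \ref{b-index-class-C*} and Theorem \ref{theo:smooth-index-CS}), and it is what makes the homotopy $q_t=V_{\rm CM}(tD_{\cyl})$ converge to $e_1$ as $t\to\infty$ in Proposition \ref{prop:relative-indeces}; the membership statement of the present theorem uses it more lightly. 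Second, the reference for the $b$-heat space and the parabolic blowup should be Melrose's book \cite{Melrose-Book}, not ``Melrose--Mendoza,'' which concerns Fredholm theory for $b$-pseudodifferential operators rather than the heat calculus.
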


Consider the Connes-Moscovici projections $V_{{\rm CM}} (D)$ and $V_{{\rm CM}} (D_{\cyl})$
associated to $D$ and $D_{\cyl}$.
With $e_1:=\begin{pmatrix} 0&0\\0&1 \end{pmatrix}$,  we have the  triple,
%$(W_{\D}, \begin{pmatrix} 0&0\\0&1 \end{pmatrix}, q_t)$,
%$t\in [0,+\infty]$,
%with
\begin{equation}\label{pre-wassermann-triple-a}
(V_{{\rm CM}} (D), e_1, q_t)
\,, \;\;t\in [1,+\infty]\,,\;\;\text{ with }
q_t:= \begin{cases} V_{{\rm CM}} (t D_{\cyl}),
\;\;\quad\text{if}
\;\;\;t\in [1,+\infty),\\
e_1, \;\;\;\;\;\;\;\;\;\;\;\;\;\,\text{ if }
\;\;t=\infty.
 \end{cases}
\end{equation}
%Similarly, we can
%  consider the triple
%\begin{equation}\label{wassermann-triple}
%(V_{\D}^*, e_1, V_{(t\D_{\cyl})}^*)
%\,, \;\;t\in [1,+\infty].
%\end{equation}

\begin{proposition}\label{prop:relative-indeces}
Under the invertibility Assumption \eqref{invertibility}, the Connes-Mos-covici idempotents $V_{{\rm CM}} (D)$
and  $V_{{\rm CM}} (D_{\cyl})$ define  through formula
  \eqref{pre-wassermann-triple-a}
a relative class in $K_0 ({}^b \mathcal{A}^\infty_G (Y),{}^b \mathcal{A}^\infty _{G,\RR^+}  (\cyl (\pa Y))$,
 the relative K-theory group 
associated to  the surjective homomorphism
%the short exact sequence
\begin{equation*}
%0\to  \mathcal{A}^\infty_G (Y)\to 
{}^b \mathcal{A}^\infty_G (Y)\xrightarrow{I} {}^b \mathcal{A}^\infty_{G,\RR^+} 
(\cyl (\pa Y)).
%\to 0
\end{equation*}
With a small abuse of notation we  denote this class by $[V_{{\rm CM}} (D), e_1, V_{{\rm CM}} (t D_{\cyl})]$.
%Similarly  the adjoint idempotents define
%  a relative class  $[V_{\D}^*, e_1, V_{(t\D_{\cyl})}^*]\in K_0 (\mathfrak{A},\mathfrak{G})$.  These two classes are equal.
\end{proposition}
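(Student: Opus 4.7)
Unraveling the definitions, what must be produced is a triple $(P,Q,p_t)$ with $P,Q$ idempotents in $M_2(({}^b\mathcal{A}^\infty_G(Y))^+)$ and $p_t$ a norm-continuous path of idempotents in $M_2(({}^b\mathcal{A}^\infty_{G,\RR^+}(\cyl(\pa Y)))^+)$ joining $I(P)$ to $I(Q)$ (with respect to the Fr\'echet topologies of the two smooth algebras). The plan is to take $P=V_{\rm CM}(D)$, $Q=e_1$, and $p_t=V_{\rm CM}(tD_{\cyl})$ for $t\in[1,\infty)$ extended by $e_1$ at $t=\infty$, and to verify these three ingredients one at a time.

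First I would check the algebraic compatibility. That $V_{\rm CM}(D)$ and $V_{\rm CM}(tD_{\cyl})$ are idempotents is a purely formal Connes--Moscovici identity involving only the squares $D^\mp D^\pm$ and holomorphic functional calculus, so it holds in any algebra containing the relevant heat operators. That they live in the correct smooth algebras is exactly the content of Theorem~\ref{theo:smooth-index-CM-bis}; for $V_{\rm CM}(tD_{\cyl})$ one simply reapplies part~(2) of that theorem with $D_{\cyl}$ replaced by the rescaled operator $tD_{\cyl}$, which satisfies the same product structure and invertibility hypothesis. Next, because $D$ is of product type in the collar and the indicial operator commutes with polynomial and holomorphic functional calculus of $D^\mp D^\pm$ (this is visible through the Mellin transform description of $I$), one obtains the identity $I(V_{\rm CM}(D))=V_{\rm CM}(D_{\cyl})=p_1$, so the path starts at the right place.

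The substantive point is behaviour at $t=\infty$, and this is where the invertibility Assumption~\ref{assumption:invertibility} enters. Since ${\rm spec}_{L^2}(D_\pa)\cap[-\alpha,\alpha]=\emptyset$, the cylindrical operator satisfies $D_{\cyl}^\mp D_{\cyl}^\pm \ge \alpha^2$ on $L^2$, so
\[
e^{-tD_{\cyl}^- D_{\cyl}^+},\ e^{-tD_{\cyl}^+ D_{\cyl}^-}\longrightarrow 0,\qquad t\to\infty,
\]
at exponential rate $e^{-t\alpha^2}$. Direct inspection of the matrix formula \eqref{cm-idempotent1} then gives $V_{\rm CM}(tD_{\cyl})\to e_1$, and a short calculation using the identity $e^{-\frac{t}{2}D^-D^+}(I-e^{-tD^-D^+})(D^-D^+)^{-1}D^-= t\int_0^1 e^{-\frac{t}{2}(1+s)D^-D^+}D^-\,ds$ bounds the off-diagonal entries uniformly as well. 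To upgrade operator-norm convergence to convergence in the Fr\'echet topology of ${}^b\mathcal{A}^\infty_{G,\RR^+}(\cyl(\pa Y))^+$, one pulls the heat kernel through the isomorphism ${}^b\mathcal{A}^\infty_{G,\RR^+}(\cyl(\pa Y))\cong(\mathcal{C}(G)\,\hat\otimes\,{}^b\Psi^{-\infty,\epsilon}_{\RR}(\cyl(\pa Z)))^{K\times K}$ and combines standard Gaussian off-diagonal estimates for the heat kernel on the $K$-slice with the Harish-Chandra Schwartz seminorm bounds along the group direction (as in the proof of Proposition~\ref{prop:CM-rapid}/\ref{extend}), picking up an overall factor $e^{-t\alpha^2/2}$ that absorbs any polynomial blow-up of the seminorms in $t$. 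Norm continuity for $t\in[1,\infty)$ is clear from the smoothness of $t\mapsto e^{-tD^\mp D^\pm}$.

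The main obstacle is precisely this last Fr\'echet estimate: one needs to show that the seminorms $\nu_{V,W,m}$ of the Schwartz kernels of $V_{\rm CM}(tD_{\cyl})-e_1$ decay as $t\to\infty$, and this requires combining the spectral gap on the fibre direction with control of the $b$-conormal behaviour in the cylindrical variable and of Harish-Chandra type behaviour in $G$. Granted this estimate, Proposition~\ref{prop:relative-indeces} follows immediately, and in fact the same argument shows that the class is independent of the starting time $t=1$ and of the specific auxiliary choices entering~$V_{\rm CM}$.
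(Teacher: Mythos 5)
Your proof plan is correct and takes essentially the same approach as the standard argument for this kind of relative class (carried out in detail in \cite{PP2}, following the template of \cite{GMPi, LMP, moriyoshi-piazza}). You correctly isolate the three ingredients of the Wassermann triple: membership of $V_{\rm CM}(D)$ and $V_{\rm CM}(tD_{\cyl})$ in the appropriate smooth algebras via Theorem~\ref{theo:smooth-index-CM-bis}; compatibility of the path at $t=1$ through the identity $I(V_{\rm CM}(D)) = V_{\rm CM}(D_{\cyl})$, which holds because the indicial homomorphism intertwines the functional calculus of $D$ and of $D_{\cyl}$ (visible through the Mellin transform); and convergence of $V_{\rm CM}(tD_{\cyl})$ to $e_1$ as $t\to\infty$, driven by the $L^2$-spectral gap of $D_\pa$ from Assumption~\ref{assumption:invertibility}. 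You are also right to flag the one substantive technical point, namely that the convergence to $e_1$ and the continuity of the path must hold in the Fr\'echet topology of ${}^b\mathcal{A}^\infty_{G,\RR^+}(\cyl(\pa Y))$, not just in operator norm; this requires propagating the exponential decay $e^{-t\alpha^2}$ through the Harish-Chandra seminorms $\nu_{V,W,m}$ along $G$ and through the $b$-conormal seminorms on the slice, precisely as in the proof of Proposition~\ref{prop:CM-rapid}. Two small corrections that do not affect the structure of the argument: rescaling $D\mapsto tD_{\cyl}$ replaces the heat operators by $e^{-t^2 D_{\cyl}^\mp D_{\cyl}^\pm}$ (with a $t^2$, not $t$), and your displayed integral identity for the off-diagonal entry should be adjusted accordingly, the relevant representation being $\smash{\frac{I-e^{-t^2 D^- D^+}}{D^-D^+}} = \int_0^{t^2} e^{-sD^-D^+}\,ds$.
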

%\smallskip
%\noindent
%We shall use the equality $[V_{\D}, e_1, V_{(t\D_{\cyl})}]=[W_{\D}, e_1, W_{(t\D_{\cyl})}]$
%in the proof of our main Theorem, in the short-time heat-kernel computation involved in its proof.

\begin{definition}\label{def:rel-index-smooth}
We define the relative (smooth) index class as
$$\Ind_\infty (D,D_\partial):= [V_{{\rm CM}}  ( D), e_1, V_{{\rm CM}} ( D_{\cyl})]\;\;\in\;\;K_0 ({}^b \mathcal{A}^\infty_G (Y),{}^b \mathcal{A}^\infty _{G,\RR^+}  (\cyl(\pa Y))\,.$$
%We remark that $ [V_{{\rm CM}}  ( D), e_1, V_{{\rm CM}} ( D_{\cyl})]= [V^*_{{\rm CM}}  ( D), e_1, V^*_{{\rm CM}}  ( D_{\cyl})]$.
\end{definition}

Recall now the smooth index class $\Ind_\infty (D)\in K_0 (\mathcal{A}^\infty_G (Y))$ defined through Theorem \ref{theo:smooth-index-CM}. The following result plays a crucial role:

\begin{theorem}\label{theo:excision}
Let $\alpha_{{\rm ex}}$  %: K_0 ( \mathcal{A}^\infty_G (M))\to K_0 ({}^b \mathcal{A}^\infty_G (M),{}^b \mathcal{A}^\infty _{G,\RR^+}  (\overline{N_+ \pa M}))$$ 
be the
excision isomorphism for the short exact sequence $0\to  \mathcal{A}^\infty_G (Y)\to {}^b \mathcal{A}^\infty_G (Y)\xrightarrow{I} {}^b \mathcal{A}^\infty_{G,\RR^+}  (\cyl ( \pa Y))\to 0$.
 Then
\begin{equation}\label{excision-for-cs}
\alpha_{{\rm ex}}(\operatorname{Ind}_\infty (D))=
\operatorname{Ind}_\infty (D,D_{\pa})\,.%\;\;\text{in}\;\; K_0 ({}^b \mathcal{A}^\infty_G (M),{}^b \mathcal{A}^\infty_{G,\RR^+}  (\overline{N_+ \pa M}))
\end{equation}
\end{theorem}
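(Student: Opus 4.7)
The plan is to verify the equality by directly applying the excision formula from Section \ref{subsect:smooth-index-relative} and then producing a homotopy of relative $K$-theory cycles. The first step is to identify $\alpha_{\mathrm{ex}}(\Ind_\infty(D))$ explicitly. Since Theorem \ref{theo:smooth-index-CM} guarantees that $V^b_{\rm CM}(D)-e_1$ has entries in the residual algebra $\mathcal{A}^\infty_G(Y) = \ker I$, the indicial image $I(V^b_{\rm CM}(D))$ equals $I(e_1)=e_1$, and the recipe for $\alpha_{\mathrm{ex}}$ recalled at the start of Section \ref{subsect:smooth-index-relative} gives
$$\alpha_{\mathrm{ex}}(\Ind_\infty(D)) = [V^b_{\rm CM}(D),\, e_1,\, \mathfrak{c}_{e_1}],$$
with $\mathfrak{c}_{e_1}$ the constant path at $e_1$.

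The second step is to exhibit this class as equal to $[V_{\rm CM}(D), e_1, V_{\rm CM}(tD_{\cyl})]$ by a single two-parameter homotopy of relative cycles. The natural interpolation is obtained through the family of parametrices
$$Q_s := Q_{\exp} - s\,Q',\qquad s\in [0,1],$$
which interpolates from the symbolic parametrix $Q_{\exp}$ to the true parametrix $Q^b_{\exp}$. Substituting each $Q_s$ into the Connes-Moscovici/Connes-Skandalis formula yields a continuous path $P_s$ of idempotents in $M_{2}({}^b\mathcal{A}^\infty_G(Y))$ from $V_{\rm CM}(D)$ to $V^b_{\rm CM}(D)$. For each $s$, the indicial image $I(P_s)$ is an idempotent on the boundary cylinder, and the family $s\mapsto I(P_s)$ traces a path from $V_{\rm CM}(D_{\cyl})$ to $e_1$. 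Writing $R_s$ for the boundary path obtained by concatenating the reverse of $\{I(P_{s'})\}_{s'\in[s,1]}$ with the rescaling family $\{V_{\rm CM}(tD_{\cyl})\}_{t\in[1,\infty]}$, the triples $(P_s, e_1, R_s)$ form a continuous one-parameter family of relative cycles interpolating, at $s=0$, the cycle $(V_{\rm CM}(D), e_1, V_{\rm CM}(tD_{\cyl}))$ (after a trivial reparametrization of the boundary path) and, at $s=1$, the cycle $(V^b_{\rm CM}(D), e_1, \mathfrak{c}_{e_1})$. Homotopy invariance of relative $K$-theory then yields the claimed equality \eqref{excision-for-cs}.

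The principal obstacle is analytic: verifying that everything in the two-parameter family lives in the correct smooth $b$-algebras with uniform control in the parameters. Specifically, one must show that $Q_s$ is a parametrix for every $s$, that the remainders and heat-type kernels produced by the Connes-Moscovici formula retain the Harish-Chandra-type decay on $G$ and the correct $b$-behaviour at the front face of $Y\times_b Y$, and that the indicial images $I(P_s)$ glue continuously with the rescaling paths $V_{\rm CM}(tD_{\cyl})$ inside $M_{2}({}^b\mathcal{A}^\infty_{G,\RR^+}(\cyl(\partial Y)))$. These estimates hinge on the invertibility Assumption \ref{assumption:invertibility} for $D_{\partial}$, which provides $I(D)^{-1}$ with rapid decay in the Mellin variable, together with the detailed kernel analysis of Theorems \ref{theo:smooth-index-CS} and \ref{theo:smooth-index-CM}.
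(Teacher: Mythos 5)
Your first step is correct: since $V^b_{\rm CM}(D)-e_1$ lies in $M_2(\mathcal{A}^\infty_G(Y))=M_2(\ker I)$, one has $I(V^b_{\rm CM}(D))=e_1$ and the recipe for $\alpha_{\rm ex}$ yields $\alpha_{\rm ex}(\Ind_\infty(D))=[(V^b_{\rm CM}(D),e_1,\mathfrak{c}_{e_1})]$. The idea of building a homotopy of relative cycles from the interpolated parametrices $Q_s=Q_{\exp}-sQ'$ is also sound: the associated Connes--Skandalis projectors $P_s$ form a continuous path in $M_2({}^b\mathcal{A}^\infty_G(Y)^+)$ from $V_{\rm CM}(D)$ to $V^b_{\rm CM}(D)$, with $I(P_s)$ running from $V_{\rm CM}(D_{\cyl})$ to $e_1$.

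The gap is in the construction of the boundary paths $R_s$. As stated, $R_s$ does not yield a valid relative cycle: the reverse of $\{I(P_{s'})\}_{s'\in[s,1]}$ is a path from $e_1$ to $I(P_s)$, while the rescaling family $\{V_{\rm CM}(tD_{\cyl})\}$ starts at $V_{\rm CM}(D_{\cyl})=I(P_0)$; these two pieces do not share an endpoint unless $s=0$, so they cannot be concatenated. Moreover, even at $s=0$ the concatenation would be the loop $e_1\to V_{\rm CM}(D_{\cyl})\to e_1$, which is not a reparametrization of the path $V_{\rm CM}(D_{\cyl})\to e_1$, and at $s=1$ the recipe does not produce the constant path $\mathfrak{c}_{e_1}$ as claimed. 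The natural homotopy $(P_s,e_1,\{I(P_{s'})\}_{s'\in[s,1]})$ (boundary path running the indicial images from $s$ up to $1$) does establish $\alpha_{\rm ex}(\Ind_\infty(D))=[(V_{\rm CM}(D),e_1,\gamma_{\rm impr})]$, where $\gamma_{\rm impr}$ is the ``parametrix--improvement'' path on the cylinder. What is then still required is to show that $[(V_{\rm CM}(D),e_1,\gamma_{\rm impr})]=[(V_{\rm CM}(D),e_1,\gamma_{\rm resc})]$, with $\gamma_{\rm resc}(t)=V_{\rm CM}(tD_{\cyl})$: two relative cycles with the same endpoint idempotents but different connecting paths need not be equal (the discrepancy a priori lives in $K_1$ of the boundary algebra). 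This comparison is the actual content of the theorem and is not addressed by invoking ``homotopy invariance.'' One viable route is to observe that both $\gamma_{\rm impr}$ and $\gamma_{\rm resc}$ are Connes--Skandalis projectors for paths of parametrices of $D^+_{\cyl}$ running from $Q_{\exp,\cyl}$ to the exact inverse, and that the affine interpolation between these two paths of parametrices, after checking that it stays in the correct Harish--Chandra/$b$-algebra, gives a two-parameter family filling in the loop; but this needs to be carried out and checked analytically rather than asserted.
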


\noindent
{\bf We summarize the content of subsection \ref{subsect:smooth-index} and the present subsection \ref{subsect:smooth-index-relative} as follows:}\\ 
{\it using the Connes-Moscovici projector(s) we have proved the existence of smooth index classes 
  \[
  \Ind_\infty (D)\in K_0(\mathcal{A}^\infty_G (Y))\ \text{and}\ \Ind_\infty (D,D_\partial )\in 
K_0 ({}^b \mathcal{A}^\infty_G (Y),{}^b \mathcal{A}^\infty_{G,\RR} ({\rm cyl}(\partial Y))),
\]
with the first one sent to the second one by the excision isomorphism
$\alpha_{{\rm ex}}$.}

\subsection{Relative cyclic cocycles}

Recall that the relative cyclic complex associated to a short exact sequence $0\to J\to A\xrightarrow{\pi}B\to 0$ of algebras
is given by 
\[
CC^k(A,B):=CC^k(A)\oplus CC^{k+1}(B),
\]
equipped with the differential
\[
\left(\begin{matrix} b+B&-\pi^*\\ 0&-(b+B)\end{matrix}\right),
\]
where $b,B$ are the usual Hochschild and cyclic differential and $\pi^*$ denotes the pull-back of functionals through the surjective morphism 
$\pi:A\to B$. In our case, the relevant extension is given, first of all, by 
\begin{equation*}
0\to  \mathcal{A}^c_G (Y)\to {}^b \mathcal{A}_{G}^c (Y)\xrightarrow{I} {}^b \mathcal{A}_{G,\RR^+}^c  (\cyl (Y))\to 0\,,\quad   \mathcal{A}^c_G (Y):= \Ker I,
\end{equation*}
where this is now, for simplicity, the short exact sequence for the small $b$-calculus. 

As in the closed case, given  a global slice $Z\subset Y$, we can view $A\in {}^b \mathcal{A}^c_G (Y)$ as a map $\Phi_A:G\to {}^b\Psi^{-\infty}(Z)$ 
by setting $\Phi_A(g,s_1,s_2):=A(s_1,gs_2)$. Likewise, an element $B\in {}^b \mathcal{A}^c_{G,\RR^+}  (\cyl(\pa Y))$
gives rise, by  Mellin transform, to a map $\Phi_B:G\times \mathbb{R}\to \Psi^{-\infty}(\partial Z)$, which is 
compactly supported on $G$, and rapidly decreasing on $\RR$. In the following, we shall denote by 
\begin{equation}\label{mellin}
\begin{split}
\Phi_A\mapsto \hat{\Phi}_A\;\;\text{the morphism}\;\;& I:{}^b \mathcal{A}_{G}^c (Y)\rightarrow {}^b \mathcal{A}_{G,\RR^+}^c  (\cyl(\pa Y))\;\;\\
&\qquad \qquad\text{followed by the Mellin transform.}
\end{split}
\end{equation}

\subsection{The b-Trace on $G$-proper $b$-manifolds}
Let us construct the correct analogue of the $b$-trace of Melrose in this setting where we have a proper group action. In our geometric setting, a choice of cut-off function $c_{Y_0}$
for the action of $G$ on $Y_{0}$ %satisfying \eqref{prop-cut-off}, 
restricts to give a cut-off function $c_{\partial Y_{0}}:=c_{Y_0} |_{\partial Y_{0}}$ for 
the $G$-action on $\partial Y_{0}$. We shall also write briefly $c_{\partial}$.
We consider as usual the associated $b$-manifold $Y$, endowed with a product-type $b$-metric $\mathbf{h}$, so that,
metrically, $Y$ is a manifold with cylindrical ends, and we shall, by a small abuse of notation, write $c_Y$ for the extension of $c_{Y_0}$ on $Y_0$ which is constant
in the cylindrical coordinate.

Using the $b$-integral of Melrose, see \cite{Melrose-Book}, we now define
\begin{definition}
For $A\in {}^b \mathcal{A}_{G}^c(Y)$ its $~^{b}\,G$-trace is defined as
\[
{}^b {\rm Tr}_{c_Y}(A):=\bint
 K_A(x,x)c_Y(x)d{\rm vol}(x).
\]
\end{definition}
Remark that the cut-off function on $Y_0$
has compact support, so the $b$-regularized integral is indeed well defined.
The argument in \cite[Prop. 2.3]{PPT} shows that ${}^b {\rm Tr}_{c_Y}$ is independent of the choice of 
cut-off function $c_Y$. Next, using a simple trick with a family $\{c_{Y,\epsilon}\}_{\epsilon>0}$ of cut-off functions converging to 
the characteristic function on $Z$, we can rewrite
\begin{equation}
\label{b-trace}
{}^b {\rm Tr}_{c_Y}(A)={}^b{\rm Tr}_{Y}\left(\Phi_A(e)\right).
\end{equation}
As in the usual $b$-calculus, ${}^b {\rm Tr}_{c_Y}$ is not a trace, but we have a precise formula for its defect on commutators, directly inspired by Melrose'  
$b$-trace formula :
\begin{lemma}
\label{btrace}
For $A_{1},A_{2}\in  {}^b \mathcal{A}_{G}^c (Y)$, we have
\[
{}^b {\rm Tr}_{c_Y}\left([\Phi_{A_{1}},\Phi_{A_{2}}]\right)=\frac{i}{2\pi}\int_{\mathbb{R}}\int_G{\rm Tr}_{\partial Z}\left(\frac{\partial I(\Phi_{A_1},h^{-1},\lambda)}{\partial \lambda}\circ
I(\Phi_{A_2},h,\lambda)\right)dh d\lambda,
\]
with ${\rm Tr}_{\partial Z}$ denoting the usual functional analytic trace on smoothing operators on closed compact manifolds.
\end{lemma}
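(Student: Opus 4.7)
The plan is to reduce the identity to Melrose's classical $b$-trace commutator formula applied fiberwise on the slice $Z$, using the identification \eqref{b-trace}. As a first step I would rewrite
\[
{}^b{\rm Tr}_{c_Y}\big([\Phi_{A_1},\Phi_{A_2}]\big) \;=\; {}^b{\rm Tr}_Z\big([\Phi_{A_1},\Phi_{A_2}](e)\big),
\]
using \eqref{b-trace}, so that everything is pushed onto the compact $b$-manifold $Z$ and the only task is to analyze the convolution commutator evaluated at $e \in G$.

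Next, I would unfold the convolution. By the definition of the product on ${}^b\mathcal{A}_G^c(Y)$,
\[
[\Phi_{A_1},\Phi_{A_2}](e) \;=\; \int_G \Phi_{A_1}(h)\circ\Phi_{A_2}(h^{-1})\,dh \;-\; \int_G \Phi_{A_2}(h)\circ\Phi_{A_1}(h^{-1})\,dh.
\]
The change of variables $h\mapsto h^{-1}$ in the second integral is permissible because the reductive Lie group $G$ is unimodular, and it combines both terms into a single $G$-integral of a commutator of operators on $Z$:
\[
[\Phi_{A_1},\Phi_{A_2}](e) \;=\; \int_G \big[\Phi_{A_1}(h^{-1}),\,\Phi_{A_2}(h)\big]_Z\,dh,
\]
where $[\,\cdot\,,\,\cdot\,]_Z$ is the operator commutator on $Z$. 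Using the compact $G$-support of the $\Phi_{A_i}$ and the continuity of ${}^b{\rm Tr}_Z$ to commute the $b$-trace past the $G$-integration, the statement reduces to computing, for each fixed $h \in G$, the classical $b$-trace of the commutator $[\Phi_{A_1}(h^{-1}),\Phi_{A_2}(h)]$ on $Z$.

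I would then invoke Melrose's $b$-trace formula directly on the compact $b$-manifold $Z$, see \cite{Melrose-Book}: for each fixed $h$ both operators lie in ${}^b\Psi^{-\infty,\epsilon}(Z) + \Psi^{-\infty,\epsilon}(Z)$, and Melrose gives
\[
{}^b{\rm Tr}_Z\big([\Phi_{A_1}(h^{-1}),\Phi_{A_2}(h)]\big) \;=\; \frac{i}{2\pi}\int_{\mathbb{R}} {\rm Tr}_{\partial Z}\!\left(\frac{\partial I(\Phi_{A_1}(h^{-1}),\lambda)}{\partial \lambda}\circ I(\Phi_{A_2}(h),\lambda)\right) d\lambda.
\]
Plugging this back into the $G$-integral and swapping the order of integration produces the desired formula, under the natural convention $I(\Phi_{A_i},h,\lambda):=I(\Phi_{A_i}(h),\lambda)$.

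The main obstacle, and the only non-formal part of the argument, is the justification of the two interchanges: pulling ${}^b{\rm Tr}_Z$ past the $G$-integral, and then swapping $\int_G$ with $\int_{\mathbb{R}}$ at the end. For the first, the compact $G$-support of the $\Phi_{A_i}$ combined with the continuity of ${}^b{\rm Tr}_Z$ in the natural Fr\'echet topology on ${}^b\Psi^{-\infty,\epsilon}(Z)$ should suffice; for the second, Fubini follows from the compact $G$-support together with the rapid decay of the indicial families in $\operatorname{Re}\lambda$ on the strip $|\operatorname{Im}\lambda|<\epsilon$, which is exactly the content of the Mellin-transform characterization recalled around \eqref{inverse-mellin}. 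A subsidiary point is that ${}^b\mathcal{A}_G^c(Y)$ contains both proper $b$-operators and residual operators in $\Psi^{-\infty,\epsilon}_{G,c}(Y)$; on the residual component the indicial family vanishes identically, so both sides of the claimed identity reduce to zero there and no extra work is required.
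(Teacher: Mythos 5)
Your argument is correct and is the natural route: reduce via the partial-trace identity \eqref{b-trace} to a $b$-trace on the compact slice $Z$, unfold the convolution, reduce to a pointwise-in-$h$ operator commutator by unimodularity, and then invoke Melrose's classical $b$-trace defect formula on $Z$. Since the lemma in the survey is stated without explicit proof and is advertised as "directly inspired by Melrose's $b$-trace formula," this is precisely the intended argument.

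Two cosmetic remarks. First, your change of variable should be applied to the \emph{first} integral $\int_G \Phi_{A_1}(h)\Phi_{A_2}(h^{-1})\,dh$, not the second as you wrote, in order to land on $\int_G[\Phi_{A_1}(h^{-1}),\Phi_{A_2}(h)]\,dh$ (otherwise you arrive at $[\Phi_{A_1}(h),\Phi_{A_2}(h^{-1})]$, which is of course the same after relabelling $h\mapsto h^{-1}$ in the final $G$-integral, so nothing is actually wrong). Second, the dismissal of the residual part is sound but should be phrased bilinearly: decomposing $A_i = A_i^b + A_i^{\rm res}$, the cross terms $[\Phi_{A_1^{\rm res}},\Phi_{A_2^b}]$ and $[\Phi_{A_1^b},\Phi_{A_2^{\rm res}}]$ are trace-class commutators (residual kernels with $\epsilon$-decay at the boundary faces are trace class, and $\Psi^{-\infty,\epsilon}$ is an ideal), so their $b$-trace equals the ordinary trace which vanishes, while on the right-hand side the indicial family of the residual factor vanishes; the point is that "both sides vanish" holds because the formula is bilinear in $(A_1,A_2)$, not because the residual summand can be discarded from a single linear expression.
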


\subsection{From absolute to relative cyclic cocycles}
Let $\omega^Y$ be any one of the cyclic cocycles   for $\mathcal{A}^\infty_G(Y)$ appearing in \eqref{3cocycles-on-Y}. We can write 
$$\omega^Y=\omega\circ {\rm Tr}_Z$$
where $\omega$ is the corresponding cyclic cocycle on the group $G$ and where ${\rm Tr}_Z:
\mathcal{A}^\infty_G (Y,E)\to \mathcal{C}(G)$ is the homomorphism of integration along the slice.
The  integrals along the slice
are well defined because the operators in $\mathcal{A}^\infty_G(Y)$ vanish to order $\epsilon$ at all the boundary hypersurfaces of $Y\times_b Y$.
If we pass to ${}^b \mathcal{A}^\infty_G(Y)$ we must replace ordinary integration with $b$-integration, as operators 
in  ${}^b \mathcal{A}^\infty_G(Y)$ do {\it not} vanish on the front face of $Y\times_b Y$; we obtain in this way a {\it regularized}
multilinear functional $\omega^{Y,r}$; this will not give us 
a cyclic cocycle anymore, precisely because of Lemma \ref{btrace}; however, using the exact form of the Lemma \ref{btrace} we will be able to produce
 a cyclic cocycle on ${}^b  \mathcal{A}_{G,\RR^+}^c  (\cyl(\pa Y))$, call it $\xi^{\partial Y}$,
in such a way that the pair $(\omega^{Y,r},\xi^{\pa Y})$ is in fact a {\it relative} cyclic cocycle for
the surjective homomorphism  $ {}^b \mathcal{A}_{G}^c (Y)\xrightarrow{I} {}^b  \mathcal{A}_{G,\RR^+}^c  (\cyl(\pa Y))$.

\subsection{Relative cyclic cocycles associated to orbital integrals.}
Let us see how the general principle put forward in the previous subsection  works in the case $\omega^Y={\rm tr}^Y_g$.
If $Y_0$ is a cocompact G-proper manifold with boundary and $Y$ is the associated $b$-manifold,  then associated to the orbital integral ${\rm tr}_g$ we have the trace-homomorphism
\begin{equation}\label{delocalized-trace-on-M}
{\rm tr}_g^Y: \mathcal{A}^\infty_G (Y)\to\CC\,,
\end{equation} 
given explicitly by 
\begin{equation}\label{tr-g-b}
{\rm tr}_g^Y (\kappa):= \int_{G/Z_g}\int_Y c_Y(hgh^{-1}y) {\rm tr} (hgh^{-1}\kappa (hg^{-1}h^{-1}y,y))dx \,d(hZ).
\end{equation}
Here $dy$ denotes the $b$-density associated to the $b$-metric $\mathbf{h}$ and
the cut-off function $c_{Y_0}$ on $Y_0$ is extended constantly along
the cylinder to define $c_Y$. 
 As already explained, the $Y$ integral converges, given that $\kappa$ vanishes of order $\epsilon>0$ on all the boundary hypersurfaces 
 of $Y\times_b Y$.
 
 Now, let $X$ be a closed manifold equipped with a proper, cocompact action of $G$. We consider  ${\rm cyl}(X)=X\times\RR$
the cylinder over $X$, equipped with the action of $G\times\RR$.  
Using the Mellin  transforms we have an algebra 
homomorphism 
\begin{equation}\label{identification}
{}^b \mathcal{A}_{G,\RR}^c  (\cyl (Y))\longrightarrow  \mathscr{S}(\RR,\mathcal{A}^c_{G}(Y)),\quad A\mapsto \hat{A}.
\end{equation}

\begin{proposition}\label{prop:1-eta-cocycle}
Let $X$ be a cocompact $G$-proper manifold without boundary; for example $X=\partial Y_0$.
Define the following 1-cochain on ${}^b \mathcal{A}^c_{G,\RR} ({\rm cyl}(X))$
\begin{equation}\label{1-eta}
\sigma^X_g (A_0,A_1)=\frac{i}{2\pi}\int_\RR {\rm tr}_g^X ( \partial_\lambda \widehat{A}_0 (\lambda)\circ \widehat{A}_1(\lambda)) d\lambda\,.
\end{equation}
Then $\sigma^X_g (\,,\,)$ is well-defined  and  is a cyclic 1-cocycle.
\end{proposition}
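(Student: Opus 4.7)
The plan is to reduce all three assertions --- convergence of the defining integral, cyclicity, and the Hochschild cocycle identity --- to ingredients already in place in the excerpt: (i) the Mellin transform \eqref{identification} identifies $A \in {}^b \mathcal{A}^c_{G,\RR}(\cyl(X))$ with a Schwartz family $\lambda \mapsto \widehat{A}(\lambda) \in \mathcal{A}^c_G(X)$ and is an algebra homomorphism, so $\widehat{A_0 A_1}(\lambda) = \widehat{A}_0(\lambda) \circ \widehat{A}_1(\lambda)$; (ii) the orbital integral ${\rm tr}_g^X$ extends to a continuous trace on $\mathcal{A}^c_G(X)$; (iii) $\partial_\lambda$ commutes with operator composition and with ${\rm tr}_g^X$, and integration by parts in $\lambda$ produces no boundary terms thanks to the Schwartz decay. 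Well-definedness is then immediate: writing $B_i := \widehat{A}_i(\lambda)$, the element $\partial_\lambda B_0 \circ B_1$ is Schwartz in $\lambda$ with values in $\mathcal{A}^c_G(X)$, so applying the continuous functional ${\rm tr}_g^X$ gives a Schwartz scalar function whose integral over $\RR$ converges absolutely.

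Next I verify cyclicity, which for a $1$-cochain amounts to $\sigma^X_g(A_1, A_0) = -\sigma^X_g(A_0, A_1)$. Integration by parts in $\lambda$ rewrites $\int_\RR {\rm tr}_g^X(B_1' B_0)\, d\lambda$ as $-\int_\RR {\rm tr}_g^X(B_1 B_0')\, d\lambda$ (the boundary terms vanish by Schwartz decay), and the trace identity ${\rm tr}_g^X(B_1 B_0') = {\rm tr}_g^X(B_0' B_1)$ closes the argument.

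For the Hochschild cocycle identity
\begin{equation*}
b\sigma^X_g(A_0, A_1, A_2) = \sigma^X_g(A_0 A_1, A_2) - \sigma^X_g(A_0, A_1 A_2) + \sigma^X_g(A_2 A_0, A_1) = 0,
\end{equation*}
I expand each term under the integral with the Leibniz rule $(B_i B_j)' = B_i' B_j + B_i B_j'$. The contribution ${\rm tr}_g^X(B_0' B_1 B_2)$ appears with a $+$ sign in $\sigma^X_g(A_0 A_1, A_2)$ and with a $-$ sign in $\sigma^X_g(A_0, A_1 A_2)$, so these cancel; the three residual terms ${\rm tr}_g^X(B_0 B_1' B_2) + {\rm tr}_g^X(B_2' B_0 B_1) + {\rm tr}_g^X(B_2 B_0' B_1)$ are rearranged via cyclicity of ${\rm tr}_g^X$ into ${\rm tr}_g^X(B_0' B_1 B_2 + B_0 B_1' B_2 + B_0 B_1 B_2')$, so the sum becomes
\begin{equation*}
b\sigma^X_g(A_0, A_1, A_2) = \frac{i}{2\pi} \int_\RR \partial_\lambda {\rm tr}_g^X(B_0 B_1 B_2)\, d\lambda,
\end{equation*}
which vanishes by the fundamental theorem of calculus, since $\lambda \mapsto {\rm tr}_g^X(B_0 B_1 B_2)$ is Schwartz.

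The only genuine subtlety I foresee is verifying that the trace property of ${\rm tr}_g^X$ and the rapid decay in $\lambda$ survive operator composition and the two derivations above. This, however, is exactly what the continuity of ${\rm tr}_g^X$ on $\mathcal{A}^c_G(X)$ combined with the Schwartz description \eqref{identification} of Mellin transforms provides, so the calculation above is legitimate throughout.
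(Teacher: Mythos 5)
Your proof is correct, and it follows the standard and essentially unique route: well-definedness from the Schwartz decay of the Mellin transform and continuity of $\operatorname{tr}_g^X$; cyclicity from integration by parts in $\lambda$ (boundary terms killed by rapid decay) plus the trace identity; and the Hochschild cocycle condition from the Leibniz rule, cyclicity of $\operatorname{tr}_g^X$, and the fundamental theorem of calculus applied to the Schwartz function $\lambda\mapsto\operatorname{tr}_g^X(B_0B_1B_2)$. The paper being a survey states this proposition without proof (referring to \cite{PP2}), but your computation is precisely the direct verification one would carry out there, and I see no gaps.
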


Let $Y$ be now a $b$-manifold and let  $\partial Y$ be its boundary.
Let ${\rm tr}^{Y,r}_g$ be the functional on ${}^b \mathcal{A}^c_{G} (Y)$:
$${\rm tr}^{Y,r}_g (T):= 
 \int_{G/Z_g}\int^b_Y c_Y(hgh^{-1}y) {\rm tr} (hgh^{-1}\kappa (hg^{-1}h^{-1}y,y))dy \,d(hZ)\,.$$
Here $\kappa$ is the kernel of the operator $G$, and Melrose's $b$-integral has been used. This is the regularization of ${\rm tr}^Y_g$ that one needs
to consider when one passes from  $\mathcal{A}^c_{G} (Y)$ to  ${}^b \mathcal{A}^c_{G} (Y)$
(for the time being on kernels of $G$-compact support). %(we shall deal with the extension of ${\rm tr}^{Y,r}_g $
%on all of ${}^b \mathcal{A}^\infty_{G} (M)$ momentarily).
Observe that $${\rm tr}^{Y,r}_g ={\rm tr}_g \circ {}^b {\rm Tr}_Z$$ with ${}^b {\rm Tr}_Z: {}^b \mathcal{A}^c_{G} (Y)\to C^\infty_c (G)$
denoting $b$-integration along the slice $Z$. More precisely, as in the closed case, we have an isomorphism
\begin{eqnarray*}
&&{}^b \mathcal{A}^c_G (Y)\\
&\cong& \left\{\Phi:G\to {}^b \Psi^{-\infty,\epsilon}(Z),~\mbox{smooth, compactly supported and }
~K\times K~\mbox{invariant}\right\}
\end{eqnarray*}
and ${}^b \Tr_Z$ associates to $\Phi$ the function $G\ni \gamma\to {}^b \Tr (\Phi (\gamma))$.

One can prove the following 

\begin{proposition}\label{prop:-0-relative-cocycle}
The pair 
$({\rm tr}^{Y,r}_g,\sigma^{\partial Y}_g)$ defines  a relative 0-cocycle for 
\[
{}^b \mathcal{A}^c_G (Y)\xrightarrow{I} {}^b \mathcal{A}^c_{G,\RR} ({\rm cyl}(\partial Y)).
\]
Moreover, the 0-degree cyclic cocycle 
$({\rm tr}^{Y,r}_g,\sigma^{\partial Y}_g)$ extends continuously to  a relative 0-cocycle for 
$${}^b \mathcal{A}^\infty_G (Y)\xrightarrow{I} {}^b \mathcal{A}^\infty_{G,\RR} ({\rm cyl}(\partial Y)).$$
Finally, the following formula holds:
\begin{equation}\label{basic-formula-3-} 
 \langle {\rm tr}^Y_g,\Ind_\infty (D)\rangle=\langle ({\rm tr}^{Y,r}_g,\sigma^{\partial Y}_g), \Ind_\infty (D,D_\partial )\rangle\,.
 \end{equation}
 \end{proposition}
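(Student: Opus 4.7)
My plan is to verify the three claims in order: (i) that $(\tr^{Y,r}_g,\sigma^{\partial Y}_g)$ satisfies the relative cocycle equations for the surjection $I$, (ii) that both components extend continuously to the Harish-Chandra Schwartz level, and (iii) that the pairing formula follows from excision.

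For (i), recall that a relative 0-cocycle $(\omega,\xi)$ for a surjection $\pi:A\twoheadrightarrow B$ must satisfy $(b+B)\omega=\pi^{*}\xi$ and $(b+B)\xi=0$. The second equation is exactly Proposition \ref{prop:1-eta-cocycle} applied to $X=\partial Y$. For the first, since $\omega=\tr^{Y,r}_g$ has degree 0 we have $B\omega=0$, so the equation reads
\begin{equation*}
\tr^{Y,r}_g(A_0 A_1)-\tr^{Y,r}_g(A_1 A_0)=\sigma^{\partial Y}_g\bigl(I(A_0),I(A_1)\bigr),\qquad A_0,A_1\in{}^b\mathcal{A}^c_G(Y).
\end{equation*}
To prove this I would use the factorization $\tr^{Y,r}_g=\tr_g\circ{}^b\Tr_Z$ stated before the proposition, together with Melrose's defect formula (Lemma \ref{btrace}). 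The key point is that ${}^b\Tr_Z$ is $G\times G$-equivariant, so its defect on commutators in ${}^b\mathcal{A}^c_G(Y)$ is a function on $G$ whose value at $h\in G$ is given by the Mellin integral on $\partial Z$; applying $\tr_g$ to this function on $G$ and using conjugation invariance of $\tr_g$ to rearrange the integrals yields exactly the right-hand side $\sigma^{\partial Y}_g(I(A_0),I(A_1))=\tfrac{i}{2\pi}\int_\RR\tr^{\partial Y}_g(\partial_\lambda \widehat{I(A_0)}(\lambda)\circ\widehat{I(A_1)}(\lambda))d\lambda$. This delocalized $b$-trace defect is the main technical step.

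For (ii), the extension to ${}^b\mathcal{A}^\infty_G(Y)$ follows once one knows that $\tr_g$ itself extends continuously to $\mathcal{C}(G)$ (this is the Harish-Chandra/Hochs-Wang result used already in Theorem \ref{thm:HochsWang-HC}) and that the Mellin transform sends Schwartz elements of the cylinder algebra to holomorphic families which are Schwartz in $\operatorname{Re}\lambda$ with values in $\mathcal{A}^\infty_G(\partial Y)$; this guarantees absolute convergence of the $\lambda$-integral defining $\sigma^{\partial Y}_g$ after pairing with $\tr^{\partial Y}_g$, and continuity in the Fr\'echet topologies defined by the Harish-Chandra seminorms of Definition \ref{defn:HC}.

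For (iii), I would combine the excision identity of Theorem \ref{theo:excision} with a general naturality property of the cyclic pairing with respect to excision. Concretely: the restriction of $\tr^{Y,r}_g$ to the ideal $\mathcal{A}^\infty_G(Y)=\ker I$ coincides with $\tr^Y_g$, because on operators whose Schwartz kernels vanish to order $\epsilon$ at every boundary hypersurface of $Y\times_b Y$ the regularized $b$-integral agrees with the ordinary integral. Standard relative cyclic homology yields, for any class $x\in K_0(\mathcal{A}^\infty_G(Y))$,
\begin{equation*}
\langle\tr^Y_g,x\rangle=\bigl\langle(\tr^{Y,r}_g,\sigma^{\partial Y}_g),\alpha_{\rm ex}(x)\bigr\rangle;
\end{equation*}
specializing to $x=\Ind_\infty(D)$ and invoking $\alpha_{\rm ex}(\Ind_\infty(D))=\Ind_\infty(D,D_\partial)$ gives \eqref{basic-formula-3-}.

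The main obstacle is the delocalized $b$-trace defect formula in step (i): Melrose's Lemma \ref{btrace} is stated for ${}^b\Tr_{c_Y}$, i.e.\ for the identity orbit, and one must carefully unfold the definition of $\tr^{Y,r}_g$ as an average over $G/Z_g$, interchange the $hZ_g$-integration with the Mellin contour integral (using equivariance and sufficient decay), and identify the result with the orbital integral $\tr^{\partial Y}_g$ appearing in \eqref{1-eta}. All other steps are formal consequences once this defect identity is in hand.
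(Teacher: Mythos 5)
Your proposal is correct and follows essentially the same route as the paper's (only sketched) proof: you use the factorization $\tr^{Y,r}_g=\tr_g\circ{}^b\Tr_Z$, a delocalized version of the $b$-trace defect formula (Lemma \ref{btrace}) to supply the equation $b\,\tr^{Y,r}_g=I^{*}\sigma^{\partial Y}_g$, Proposition \ref{prop:1-eta-cocycle} for $(b+B)\sigma^{\partial Y}_g=0$, the Hochs--Wang extension of $\tr_g$ to $\mathcal{C}(G)$ together with Mellin-transform decay for the continuous extension, and finally Theorem \ref{theo:excision} plus the compatibility of the relative pairing with the excision map (noting $\tr^{Y,r}_g|_{\ker I}=\tr^Y_g$ since residual kernels are $b$-integrable) to obtain \eqref{basic-formula-3-}. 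You also correctly flag the delocalized $b$-trace defect identity --- interchanging the $G/Z_g$-orbital average with the Mellin contour integral and identifying the result with $\tr^{\partial Y}_g$ --- as the single genuinely technical step, which matches the paper's emphasis.
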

 
 \subsection{The delocalized APS index formula on $G$-proper manifolds}
 We shall prove the delocalized  APS index formula using 
 crucially \eqref{basic-formula-3-}.
 On the right hand-side we have the pairing of  the relative cocycle $(\tau^{Y,r}_g,\sigma^{\partial Y}_g)$ with the relative index class
 defined by 
\begin{equation}\label{pre-wassermann-triple}
(V_{{\rm CM}}(D), e_1, q_t)
\,, \;\;t\in [1,+\infty]\,,\;\;\text{ with }
q_t:= \begin{cases} V_{{\rm CM}}(t D_{\cyl})
\;\;\quad\text{if}
\;\;\;t\in [1,+\infty)\\
e_1 \;\;\;\;\;\;\;\;\;\;\;\;\;\,\text{ if }
\;\;t=\infty
 \end{cases}
\end{equation}
and with $e_1:=\begin{pmatrix} 0&0\\0&1 \end{pmatrix}$.\\
By definition of 
relative pairing we have:
\begin{equation}\label{relative-pairing}
\begin{split}
&\langle (\tr^{Y,r}_g,\sigma^{\partial Y}_g), (V_{{\rm CM}}(D), e_1, q_t)\rangle\\
=&
 \tr^{Y,r}_g (e^{-D^- D^+})- \tr^{Y,r}_g (e^{-D^+ D^-})
+ \int_1^\infty \sigma^{\partial Y}_g ([\dot{q}_t,q_t],q_t)dt\,.
\end{split}
\end{equation}
A complicated but totally elementary computation shows that the following 
Proposition holds:
\begin{proposition}\label{prop:from-cocycle-to-eta}
 The term $\int_1^\infty \sigma^{\partial Y}_g ([\dot{q}_t,q_t],q_t)dt$, with 
 $q_t:= V_{{\rm CM}}(t D_{\cyl})$ is equal to
 $$
-\frac{1}{2} \left( \frac{1}{\sqrt{\pi}} \int_1^\infty {\rm tr}_g^{\partial Y} (D_{\partial} \exp (-tD^2_{\partial}) )\frac{dt}{\sqrt{t}}\right)
\,.$$
\end{proposition}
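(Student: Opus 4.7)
The plan is to compute the integrand $\sigma_g^{\partial Y}([\dot q_t, q_t], q_t)$ pointwise in $t$ by reducing everything to the indicial families on $\cyl(\pa Y)$, performing the Gaussian integration in the Mellin variable $\lambda$, and extracting the eta-like integrand on $\pa Y$. The result at each fixed $t$ should be
\[
\sigma_g^{\partial Y}([\dot q_t, q_t], q_t) \;=\; -\frac{1}{2\sqrt{\pi}}\, \frac{1}{\sqrt{t}}\, \tr_g^{\partial Y}\bigl(D_\pa e^{-t D_\pa^2}\bigr),
\]
and then the claimed formula follows by integrating in $t\in[1,\infty)$.

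First, I would make the indicial families explicit. Since the $b$-metric is of product type near the boundary, the graded Dirac operator on the cylinder takes the standard form $D_{\cyl}^+ = \pa_s + D_\pa$, $D_{\cyl}^- = -\pa_s + D_\pa$, so that $D_{\cyl}^\mp D_{\cyl}^\pm = -\pa_s^2 + D_\pa^2$ (as $D_\pa$ commutes with $\pa_s$). Under the Mellin transform in $s$, which sends $\pa_s\mapsto i\lambda$, the indicial family of $(tD_{\cyl})^\mp(tD_{\cyl})^\pm$ is therefore $t^2(\lambda^2 + D_\pa^2)$, and that of $(tD_{\cyl})^\pm$ is $t(\pm i\lambda + D_\pa)$. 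Inserting these into the Connes--Moscovici formula \eqref{cm-idempotent1} produces the indicial family $I(q_t,\lambda)$ of $q_t = V_{\mathrm{CM}}(tD_{\cyl})$ as an explicit $2\times 2$ matrix whose entries are holomorphic functions of $D_\pa$ and $\lambda$, built from $e^{-t^2(\lambda^2+D_\pa^2)}$ and $e^{-\frac{1}{2}t^2(\lambda^2+D_\pa^2)}$ times rational expressions and the first-order factors $t(\pm i\lambda+ D_\pa)$. Differentiating in $t$ and using $\frac{d}{dt}e^{-t^2X} = -2tX e^{-t^2X}$ yields a similarly explicit formula for $I(\dot q_t,\lambda)$.

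Second, I would exploit the idempotent structure. A direct computation using the intertwining $D^- e^{-\frac{1}{2}D^+D^-} = e^{-\frac{1}{2}D^-D^+}D^-$ shows that $V_{\mathrm{CM}}(D)^2 = V_{\mathrm{CM}}(D)$, hence $q_t^2 = q_t$. This yields the projector identities $q_t \dot q_t q_t = 0$ and $\dot q_t = (1-q_t)\dot q_t q_t + q_t\dot q_t(1-q_t)$. Combined with the cyclicity of $\tr_g^{\pa Y}$ on smoothing operators, these identities let one rewrite
\[
\sigma_g^{\pa Y}([\dot q_t, q_t], q_t) = \frac{i}{2\pi}\int_\RR \tr_g^{\pa Y}\!\bigl(\pa_\lambda I([\dot q_t,q_t],\lambda)\circ I(q_t,\lambda)\bigr)\, d\lambda
\]
as a clean Gaussian expression in $\lambda$ whose operator-valued coefficient involves only $D_\pa$. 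Most mixed cross terms cancel by the projector identities and cyclicity, and what survives is a sum of terms of the form $\tr_g^{\pa Y}\bigl(P(D_\pa,\lambda)\, e^{-\alpha t^2(\lambda^2 + D_\pa^2)}\bigr)$ with $P$ a polynomial in $\lambda$ of low degree, carrying a single $D_\pa$ from the off-diagonal blocks.

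Third, I would perform the Gaussian integration in $\lambda$. The $\lambda$-odd contributions vanish by parity, while the even ones reduce through
$\int_\RR \lambda^{2k}\, e^{-t^2\lambda^2}\, d\lambda = \tfrac{(2k)!\sqrt{\pi}}{(2t)^{2k} k!\, t}$
to a trace purely in $D_\pa$. Assembling the diagonal and off-diagonal exponentials (using that $\lambda$ and $D_\pa$ commute) telescopes the $e^{-\frac{1}{2}t^2(\lambda^2+D_\pa^2)}$ and $e^{-t^2(\lambda^2+D_\pa^2)}$ factors into the single heat operator evaluated at $D_\pa^2$, after the substitution $u = t^2$ is tracked through the extra $-2tD_{\cyl}^\mp D_{\cyl}^\pm$ produced by $\dot q_t$. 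Reconciling the constants $\frac{i}{2\pi}$, $\frac{\sqrt{\pi}}{t}$, and the factor $2$ from differentiating $t^2$ produces the claimed pointwise identity, and integration in $t$ finishes the proof.

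The main obstacle will be the bookkeeping in the second and third steps: one must organize the block-matrix computation of $I(\dot q_t\cdot q_t - q_t \cdot \dot q_t,\lambda)\circ I(q_t,\lambda)$, apply the projector identities consistently, and keep track of Clifford/grading signs so that all terms except a single Gaussian carrying a $D_\pa$ factor cancel out. Once that algebraic reduction is carried through cleanly, the Gaussian integration is routine and yields the stated heat-kernel expression on $\pa Y$.
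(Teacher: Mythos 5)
Your overall plan---pass to indicial families, use the idempotent structure of the Connes--Moscovici projector to collapse the matrix algebra, then perform the $\lambda$-Gaussian integrations---is the right one and is exactly what the paper means by a ``complicated but totally elementary computation.'' The projector identities you invoke ($q_t^2=q_t$, hence $q_t\dot q_tq_t=0$ and $\dot q_t=(1-q_t)\dot q_tq_t+q_t\dot q_t(1-q_t)$) are correct, and your Gaussian moment formula is also correct.

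There is, however, a concrete error in the pointwise identity you state at the outset. Since $q_t=V_{\rm CM}(tD_{\cyl})$, every heat factor in $q_t$ and $\dot q_t$ is of the form $e^{-\alpha t^2 D_{\cyl}^\mp D_{\cyl}^\pm}$, which after Mellin becomes $e^{-\alpha t^2(\lambda^2+D_\pa^2)}$; differentiating in $t$ brings down an additional $-2\alpha t(\lambda^2+D_\pa^2)$ but does not change the $t^2$ in the exponent. Every $\lambda$-Gaussian integral $\int_\RR\lambda^{2k}e^{-\beta t^2\lambda^2}\,d\lambda$ again produces a function of $t^2$. Consequently $\sigma_g^{\pa Y}([\dot q_t,q_t],q_t)$ is necessarily a function of $t^2$: the correct pointwise identity is
\[
\sigma_g^{\partial Y}\bigl([\dot q_t, q_t], q_t\bigr) \;=\; -\frac{1}{\sqrt{\pi}}\,\tr_g^{\partial Y}\!\bigl(D_\partial\, e^{-t^2 D_\partial^2}\bigr),
\]
not the $\frac{1}{\sqrt{t}}\,e^{-tD_\pa^2}$ form you wrote, which cannot arise from the computation at fixed $t$. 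The form appearing in the proposition, with $e^{-tD_\pa^2}$ and the measure $\frac{dt}{\sqrt{t}}$, is obtained only \emph{after} integrating the correct pointwise formula over $t\in[1,\infty)$ and substituting $u=t^2$ (note $t=1\mapsto u=1$ so the limits are preserved, and $dt=\frac{du}{2\sqrt{u}}$ produces exactly the $\frac{1}{2}\cdot\frac{1}{\sqrt{\pi}}\cdot\frac{du}{\sqrt{u}}$ of the claim). Your passing remark about ``the substitution $u=t^2$'' suggests you saw this, but it must come at the level of the outer $t$-integral, not folded into a pointwise claim; as written your first displayed formula is false. Once that is corrected, the rest of your scheme goes through.
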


\noindent
Thanks to this Proposition we have that 
\begin{equation}\label{relative-pairing-bis}
\begin{split}
&\langle (\tr^{Y,r}_g,\sigma^{\partial Y}_g), (V(D), e_1, q_t)\rangle\\
=&
 \tr^{Y,r}_g (e^{-D^- D^+})- \tr^{Y,r}_g (e^{-D^+ D^-})
-\frac{1}{2}  \int_1^\infty      \frac{1}{\sqrt{\pi}}\tau^{\partial M}_g (D_{\partial Y} \exp (-tD^2_{\partial Y}) )\frac{dt}{\sqrt{t}}   \,.
\end{split}
\end{equation}
 As a last step we replace $D$ by  $sD$; in the equality
 $$ \langle\tr^Y_g,\Ind_\infty (D)\rangle=\langle (\tr^{Y,r}_g,\sigma^{\partial Y}_g), \Ind_\infty (D,D_\partial )\rangle$$
 the left hand side  $\langle \tau^Y_g, \Ind_\infty (D)\rangle$ remains unchanged
 whereas the right hand side 
  becomes
 $$\tr^{Y,r}_g (e^{-s^2 D^- D^+})- \tr^{Y,r}_g (e^{-s^2 D^+ D^-})-  \frac{1}{2}
 \int_s^\infty   \frac{1}{\sqrt{\pi}} \tr^{\partial Y}_g (D_{\partial Y} \exp (-tD^2_{\partial Y}) \frac{dt}{\sqrt{t}}  \,.$$
 Summarizing, for each $s>0$ we have 
  \begin{eqnarray*}\label{s-equality}
  &&\tr^{Y,r}_g (e^{-s^2 D^- D^+})- \tr^{Y,r}_g (e^{-s^2 D^+ D^-})\\
  &=& \langle \tr^Y_g, \Ind_\infty (D)\rangle +  \frac{1}{2}
 \int_s^\infty   \frac{1}{\sqrt{\pi}} \tr^{\partial Y}_g (D_{\partial Y} \exp (-tD^2_{\partial Y}) \frac{dt}{\sqrt{t}}  \,.
 \end{eqnarray*}
 Now we take the limit as $s\downarrow 0$;
 using Getzler' rescaling in the $b$-context one can prove that 
  $$\lim_{s\downarrow 0} \tr^{Y,r}_g (e^{-s^2 D^- D^+})- \tr^{Y,r}_g (e^{-s^2 D^+ D^-})= 
 \int_{Y_0^g} c^g_{Y_0} {\rm AS}_g (D_0)$$ 
 with $c^g_{Y_0} {\rm AS}_g(D_0)$ defined  in Equation (\ref{eq:X-geom-form}).

 \smallskip
 \noindent
 Assuming this  last result, we can infer  that the limit 
 $$ \lim_{s\downarrow 0} \frac{1}{2}
 \int_s^\infty   \frac{1}{\sqrt{\pi}} \tr^{\partial Y}_g (D_{\partial Y} \exp (-tD^2_{\partial Y}) \frac{dt}{\sqrt{t}}$$
 exists and  equals  
  $$\int_{Y_0^g} c^g_{Y_0^g} {\rm AS}_g (D_0) - \langle \tr^Y_g, \Ind_\infty (D)\rangle\,.$$
 We conclude that the following Theorem holds:
 
 \begin{theorem}\label{theo:0-delocalized-aps} (0-degree delocalized APS)\\
 Let $G$ be a connected, linear real reductive group. Let $g$ be a semisimple element. Let $Y_0$, $Y$, $D$, $D_{\partial Y}$ as above.
 Assume that $D_{\partial Y}$ is $L^2$-invertible.
 Then 
 $$\eta_g (D_{\partial Y}):=\frac{1}{\sqrt{\pi}} \int_0^\infty \tr^{\partial Y}_g (D_{\partial Y} \exp (-tD^2_{\partial Y}) \frac{dt}{\sqrt{t}}$$
 exists and for the pairing of the index class $\Ind(D_\infty)\in K_0(\mathcal{A}^\infty_G (Y))\equiv
 K_0 (C^*(Y_0\subset Y)^G)$ with the $0$-cocycle  ${\rm tr}^Y_g\in HC^0 ((\mathcal{A}^\infty_G (Y)))$ 
 the following delocalized 0-degree APS index formula holds:
 \begin{equation}\label{main-0-degree}
 \langle \tr^Y_g,\Ind_\infty (D) \rangle= \int_{Y_0^g} c_{Y_0^g} {\rm AS}_g (D_0) - \frac{1}{2} \eta_g (D_{\partial Y})\,,
 \end{equation}
where the integrand $c_{Y^g_0} {\rm AS}_g(D_0)$ is defined in the same way as the one in Equation (\ref{eq:X-geom-form}). 
 \end{theorem}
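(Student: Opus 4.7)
The plan is to follow the strategy already prepared by the earlier results: translate the absolute pairing to a relative one, compute the relative pairing explicitly in terms of heat kernels, and then extract the eta invariant by passing to the small-time limit via a Getzler-rescaling argument in the $b$-setting.

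First, by Theorem \ref{theo:excision}, the excision isomorphism sends $\Ind_\infty(D)$ to $\Ind_\infty(D,D_\partial) = [V_{{\rm CM}}(D), e_1, V_{{\rm CM}}(tD_{\cyl})]$. By Proposition \ref{prop:-0-relative-cocycle}, the relative $0$-cocycle $({\rm tr}^{Y,r}_g, \sigma^{\partial Y}_g)$ extends to $({}^b\mathcal{A}^\infty_G(Y), {}^b\mathcal{A}^\infty_{G,\RR^+}(\cyl(\partial Y)))$, and the fundamental identity
\[
\langle {\rm tr}^Y_g, \Ind_\infty(D)\rangle = \langle ({\rm tr}^{Y,r}_g,\sigma^{\partial Y}_g), \Ind_\infty(D,D_\partial)\rangle
\]
holds. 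So the first step is simply to invoke this identity, which reduces the task to computing the relative pairing.

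Next, I would write out the relative pairing on the Connes-Moscovici representative as in \eqref{relative-pairing}, giving
\[
{\rm tr}^{Y,r}_g(e^{-D^-D^+}) - {\rm tr}^{Y,r}_g(e^{-D^+D^-}) + \int_1^\infty \sigma^{\partial Y}_g([\dot{q}_t,q_t],q_t)\,dt,
\]
and apply Proposition \ref{prop:from-cocycle-to-eta} to convert the cyclic-$1$-cocycle integral into the truncated eta integral $-\tfrac{1}{2\sqrt{\pi}}\int_1^\infty {\rm tr}^{\partial Y}_g(D_\partial e^{-tD_\partial^2})\,\tfrac{dt}{\sqrt{t}}$. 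Then rescale: replace $D$ by $sD$ throughout. The left-hand side is rescaling-invariant (it is a $K$-theoretic pairing with the fixed class $\Ind_\infty(D)$), while the right-hand side becomes
\[
{\rm tr}^{Y,r}_g(e^{-s^2 D^-D^+}) - {\rm tr}^{Y,r}_g(e^{-s^2 D^+D^-}) - \tfrac{1}{2\sqrt{\pi}}\int_s^\infty {\rm tr}^{\partial Y}_g(D_\partial e^{-tD_\partial^2})\,\tfrac{dt}{\sqrt{t}}.
\]

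The final step is to let $s\downarrow 0$. The heat-supertrace terms require a local index theorem for the $g$-delocalized $b$-supertrace: one must show
\[
\lim_{s\downarrow 0}\bigl({\rm tr}^{Y,r}_g(e^{-s^2 D^-D^+}) - {\rm tr}^{Y,r}_g(e^{-s^2 D^+D^-})\bigr) = \int_{Y_0^g} c_{Y_0^g}\,{\rm AS}_g(D_0).
\]
This is the main obstacle: one needs Getzler's rescaling on the $b$-stretched double adapted to both the presence of the cylindrical end and the delocalization at a nontrivial semisimple element $g$. The argument goes by localizing the heat kernel to a neighborhood of the $g$-fixed set $Y_0^g$ (using exponential decay of off-fixed-set contributions, which holds for semisimple $g$ by Harish-Chandra's estimates extended to the $b$-context), then applying Getzler rescaling in normal coordinates to obtain the integrand of \eqref{eq:X-geom-form}. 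The $b$-regularization does not disturb the short-time asymptotics because the cylindrical end is compatible with the product structure and $D_\partial$ is $L^2$-invertible, which suppresses boundary contributions to the local supertrace. Once this limit is established, the equation above forces the improper integral $\int_0^\infty {\rm tr}^{\partial Y}_g(D_\partial e^{-tD_\partial^2})\,\tfrac{dt}{\sqrt{t}}$ to converge, defining $\eta_g(D_{\partial Y})$, and the stated formula \eqref{main-0-degree} follows by rearranging.
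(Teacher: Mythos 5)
Your proposal follows the paper's own route step by step: excision (Theorem~\ref{theo:excision}) to pass from absolute to relative classes, Proposition~\ref{prop:-0-relative-cocycle} for the equality of pairings, the explicit relative pairing with the Connes--Moscovici idempotents \eqref{relative-pairing}, Proposition~\ref{prop:from-cocycle-to-eta} to convert the eta-cocycle integral into the truncated eta integrand, then rescaling $D\mapsto sD$ and taking $s\downarrow 0$ via Getzler rescaling in the $b$-setting to produce the local term and force convergence of the eta integral. This is exactly the argument sketched in the paper, so no further comparison is needed.
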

 
 \begin{remark}This result was first discussed in the work of Hochs-Wang-Wang \cite{HWW2}.
 Our treatment, centred around the interplay between absolute and relative cyclic cohomology and the $b$-calculus, is completely different; moreover our treatment
 allows us to  get sharper results compared to \cite{HWW2} in the case of a connected linear real reductive group $G$.
  More precisely, in Theorem \ref{theo:0-delocalized-aps} we only assume that $g$ is a semisimple element of $G$ to obtain the index formula (\ref{main-0-degree}), while in \cite[Theorem 2.1]{HWW2}, the authors require that $G/Z_g$ is compact.
  \end{remark}

\begin{remark}
%When we consider $\varphi\in H^0_{{\rm diff}} (G)$ equal to the the trivial 0-cocycle on $G$ we 
% obtain as associated $0$-cocycle on $Y$ the trace ${\rm tr}^Y_e$. 
If we take $g=e$ then we get 
$$\left<{\rm tr}_e^Y, \Ind_{C^*} (D)\right> = \int_Y c_Y {\rm AS}(Y) -\frac{1}{2} \eta_G (D_\partial)=\int_{Y_0} c_{Y_0} {\rm AS}(Y_0) -\frac{1}{2} \eta_G (D_\partial)$$
with 
$$\eta_G (D_\partial)= \frac{2}{\sqrt{\pi}}\int_0^\infty \tr^{\partial Y}_e D_\partial e^{-(tD_\partial)^2}dt.$$
Notice however that  this particular result holds under much more general assumptions on $G$ than the ones
we are currently imposing ($G$ connected reductive linear Lie group). 
Indeed, the pairing of the index class with $ {\rm tr}_e^Y$ is equal to the pairing of the Morita equivalent 
$C^*_r(G)$-index class $\Ind_{C^*_r(G)}(D)$ with the canonical trace $ {\rm tr}_e$ on $C^*G$:
$$ \left<\ {\rm tr}_e^Y, \Ind_{C^*} (D)\right> = \left<{\rm tr}_e, \Ind_{C^*G}(D)\right>.$$ 
Proceeding as  in \cite{Wang:L2},
one  checks that $\left<{\rm tr}_e^Y, \Ind_{C^*G}(D)\right> $ is equal to 
the von Neumann G-index of $D^+$. A formula for this von Neumann index can be proved
in the von Neumann framework
by mimicking the proof of Vaillant for Galois coverings of manifolds with cylindrical
ends \cite{Vaillant-master} (in turn inspired by Melrose'  proof on manifolds with cylindrical ends). 
Thus, assuming only that $G$ is a Lie group
but keeping the $L^2$-invertibility of the boundary operator $D_\partial$, we obtain that 
$\left< {\rm tr}_e^Y, \Ind_{C^*} (D)\right>$ and $\left< {\rm tr}_e, \Ind_{C^*G}(D)\right>$ are well defined and that
$$\left< {\rm tr}_e^Y, \Ind_{C^*} (D), \right> = \left< {\rm tr}_e, \Ind_{C^*G}(D)\right>=\Ind_{{\rm vN}} (D^+)= \int_{Y_0} c_{Y_0} {\rm AS}(Y_0) -\frac{1}{2} \eta_G (D_\partial)$$
with 
$\eta_G (D_\partial)= \frac{2}{\sqrt{\pi}}\int_0^\infty  {\rm tr}_e^{\partial Y} D_\partial e^{-(tD_\partial)^2}dt\,.$\\
This formula is the same as the one appearing in the work of Hochs-Wang-Wang, see \cite{HWW1}.
\end{remark}

\subsection{More on delocalized eta invariants.}
In the previous section we have obtained the well-definedness  of $\eta_g (D_{\partial})$, with $D_\partial$ being $L^2$-invertible, as a byproduct of the proof
 of the delocalized APS index theorem for 0-degree cocycles. In fact, one can  show that  $\eta_g (D)$ is well defined
 on a cocompact
 $G$-proper manifold 
 even if $D$ does not arise  as a boundary operator.
 This is the content of the next 
 theorem, partially discussed also in \cite{HWW1,HWW2}.
%We assume as always when short time limits of the heat kernels are involved, that the metric is slice-compatible.  

%\textcolor{red}{The following theorem needs to be corrected. The assumption on invertibility of $D$ is needed. But we do not need the metric to be slice compatible.}
%\textcolor{red}{The change here is nice.}

\begin{theorem}\label{thm etadefine}
Let $(X,\mathbf{h})$ be a cocompact $G$-proper manifold without boundary endowed with a %slice compatible 
$G$-equivariant metric and let $D$ be an $L^2$-invertible Dirac-type operator of the form (\ref{dec dirac}). Let $g$ be a semisimple element. The integral 
\begin{equation}\label{thm:eta}
\frac{1}{\sqrt{\pi}} \int_0^\infty {\rm tr}^X_g (D\exp (-tD^2) \frac{dt}{\sqrt{t}}
\end{equation}
converges and defines the delocalized eta invariant associated to $D$, $\eta_{g}(D)$.
\end{theorem}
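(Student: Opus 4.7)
The plan is to split the time integral as $\int_0^1+\int_1^\infty$ and establish convergence of each piece separately. The large-time part will be handled by the $L^2$-invertibility hypothesis, while the small-time part requires short-time heat kernel asymptotics localized to the fixed-point set $X^g$ via a Getzler-type rescaling argument adapted to the semisimple orbital integral $\tr^X_g$.

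For the large-time piece, I would exploit that $L^2$-invertibility of $D$ gives a spectral gap $\operatorname{spec}_{L^2}(D)\cap[-\alpha,\alpha]=\emptyset$, so that $\|De^{-tD^2}\|_{L^2\to L^2}\le Ce^{-\alpha^2 t/2}$. To convert this operator-norm bound into a bound on $\tr^X_g(De^{-tD^2})$, the strategy is to use the split form \eqref{dec dirac}, which gives
\begin{equation*}
De^{-tD^2}=(D_{G,K}e^{-tD_{G,K}^2})\hat\otimes e^{-tD_Z^2}+e^{-tD_{G,K}^2}\hat\otimes(D_Z e^{-tD_Z^2}),
\end{equation*}
together with Gaussian estimates on the symmetric space $G/K$ to show that the partial trace $\Tr_S(De^{-tD^2})$ sits in the Harish-Chandra Schwartz algebra $\mathcal{C}(G)$, with seminorms decaying like $e^{-\alpha^2 t/2}$ for $t\ge 1$. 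Since the orbital integral $\tr_g$ extends continuously from $\mathcal{C}(G)$ (as already used throughout the paper), one obtains $|\tr^X_g(De^{-tD^2})|\le C'e^{-\alpha^2 t/2}$, yielding absolute convergence of $\int_1^\infty \tr^X_g(De^{-tD^2})\,\frac{dt}{\sqrt t}$.

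For the small-time piece, I would localize the orbital integral $\tr^X_g$ to a tubular neighborhood of the fixed-point submanifold $X^g$ following Harish-Chandra's methods, exactly as in the arguments behind Theorems \ref{thm:HochsWang-HC} and \ref{theo:0-delocalized-aps}. On this neighborhood one applies Berezin--Getzler rescaling at each point of $X^g$; the crucial input is the standard vanishing of the negative-order coefficients in the short-time expansion of $\tr(De^{-tD^2})$ for operators of Dirac type, coming from parity of the Clifford symbol of $D$. This yields $\tr^X_g(De^{-tD^2})=O(1)$ as $t\to 0^+$, so that the integrand is $O(t^{-1/2})$ and the small-time part of \eqref{thm:eta} is integrable on $(0,1]$.

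The main obstacle is this short-time analysis in the $G$-proper, noncompact setting. For $g=e$ it is essentially the heat-kernel argument behind \cite{Wang:L2}, while for general semisimple $g$ one combines Harish-Chandra localization with Getzler rescaling as in \cite{Hochs-Wang-HC, HWW1, HWW2, PPST}. A subtle uniformity point is that one must verify, via the split structure \eqref{dec dirac} and heat-kernel estimates on $G/K$, that $\Tr_S(De^{-tD^2})$ lies in $\mathcal{C}(G)$ for every $t>0$ with estimates locally uniform in $t$, so that the orbital integral is well defined and the rescaling limit can be computed. Once these ingredients are in place, the two pieces combine to give absolute convergence of \eqref{thm:eta} and hence well-definedness of $\eta_g(D)$.
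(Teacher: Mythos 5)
Your time-splitting into $\int_0^1+\int_1^\infty$, and the plan of localizing the orbital integral to $X^g$ and applying a rescaling/parity argument for small time, are consonant with the paper's approach; the paper instead delegates the short-time regularity to a result of Zhang \cite{Zhangwp}, but your Getzler route is a legitimate alternative for that half.

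The large-time argument has a genuine gap. The operator-norm estimate $\|De^{-tD^2}\|_{L^2\to L^2}\leq Ce^{-\alpha^2 t/2}$ cannot be converted into a bound on $\tr^X_g(De^{-tD^2})$ along the lines you indicate. The orbital integral integrates the Schwartz kernel of $De^{-tD^2}$ over the \emph{noncompact} orbit $G/Z_g$, so what is actually needed is that $\Tr_S(De^{-tD^2})$ lies in $\mathcal{C}(G)$ with Harish-Chandra seminorms $\nu_{V,W,m}$ decaying in $t$; an $L^2$ operator-norm bound controls neither the off-diagonal decay in $g$ needed to beat the weight $\Theta(g)^{-1}(1+L(g))^m$ nor the left-right derivative seminorms, so it does not even show that the orbit integral converges for a single $t>1$, let alone decays exponentially. ``Gaussian estimates on $G/K$'' are also a short-time tool; the long-time Dirac heat kernel on the symmetric space is governed by the Plancherel decomposition and the bottom of the tempered spectrum, not by Gaussian bounds. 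Your tensor decomposition of $De^{-tD^2}$ is the right opening move, but one cannot then estimate the two tensor factors separately: $\Tr(e^{-tD_Z^2})\to\dim\Ker D_Z$ rather than to $0$, and $D_{G,K}$ may itself have $0$ in its spectrum, so in each term both factors may fail to decay even though the $L^2$-invertibility of $D$ forces their combination to. This is exactly the delicacy the paper flags in the remark following Theorem~\ref{thm etadefine}: the long-time convergence argument is tied specifically to the split operator, because one must exploit the product heat kernel $e^{-tD^2}=e^{-tD^2_{G,K}}\hat\otimes e^{-tD^2_Z}$ so that, after the slice trace, one is left with scalar multiples of the Dirac heat kernel on $G/K$, to be analyzed by Harish-Chandra theory in conjunction with the joint spectral gap of $D^2=D_{G,K}^2+D_Z^2\geq\alpha^2$ --- not factor-by-factor.
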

Notice that the proof of this result is rather delicate, both at $t=0$, where results of Zhang  
\cite{Zhangwp} are used, and at $t=+\infty$, where a delicate analysis of the large time behaviour 
of the heat kernel is needed.

We also point out that in a previous version of this survey, based on an earlier version of our work with Posthuma and Song \cite{PPST}, this result was wrongly stated without the invertibility 
assumption; our proof of the convergence at $t=+\infty$ does apply to $D_{{\rm split}}$ but not to $D$, which
is why we do need the invertibility assumption.
%
%, and employs results of Moscovici-Stanton \cite{Moscovici-Stanton}, Bismut  \cite{Bismut-eta}, and Zhang \cite{Zhangwp}.

\subsection{Relative cyclic cocycles associated to smooth group cocycles.} In this subsection we want to see 
how, given a smooth group cocycle 
$\varphi\in Z^k_{\rm diff}(G)$,  we can pass from the cyclic cocycle  $\tau^Y_\varphi$ on
$\mathcal{A}_{G}^c (Y)$ to  a relative cyclic cocycle for
the  surjective homomorphism $ {}^b \mathcal{A}_{G}^c (Y)\xrightarrow{I} {}^b  \mathcal{A}_{G,\RR^+}^c  (\cyl(\pa Y))$.
As we have already explained, the first step is to pass from $\tau^Y_\varphi$ to $\tau^{Y,r}_\varphi$, 
and this is achieved by replacing integrals by $b$-integrals or, equivalently, traces by $b$-traces.
This is what we do in the next definition.
 
\begin{definition}\label{regularized-group}
Let $Y$ be a proper $G$-manifold with boundary, and $\varphi\in Z^k_{\rm diff}(G)$ be a smooth group cocycle.  For $A_0,\ldots, A_k\in 
{}^b \mathcal{A}_{G}^c (Y)$, define
\[
\begin{split}
\tau^{Y,r}_\varphi (A_0,\ldots,A_k):=&\int_{G^{\times k}}{}^b{\rm Tr}_Z
\left(\Phi_{A_0}((g_1\cdots g_k)^{-1})\circ \Phi_{A_1}(g_1)\circ \ldots\circ \Phi_{A_k}(g_k)\right)\\
&\qquad \qquad\qquad 
\varphi (e,g_1,g_1g_2,\ldots,g_1\cdots g_k)dg_1\cdots dg_k.
\end{split}
\]
In the above equation we have used the homogeneous differentiable group cohomology complex introduced in Definition \ref{defn:groupcoh-homogeneous}.
\end{definition}

%We write $\cyl (Y)$ for $[-1,1]\times Y$.
%\textcolor{cyan}{This needs to be clarified a bit in that on the left we should take $[-1,1]\times Y$
%and also this is not really an isomorphism unless you take the small calculus...}

Next, following the general strategy explained at the beginning of this section, we want to define the eta cocycle associated to $\varphi$.

\begin{definition}\label{sigma-cocycle}
Let $X$ be a closed manifold equipped with a proper, cocompact action of $G$, and let $\varphi\in C^k_{\rm diff}(G)$ be a smooth group 
cochain.  The {\em eta cochain} on ${}^b \mathcal{A}_{G,\RR}^c  ({\rm cyl}(X))$ associated to $\varphi$ is defined as
\begin{align*}
&\sigma^X_\varphi (B_0,\ldots,B_{k+1})\\
:=&\frac{(-1)^{k+1}}{2\pi}\int_{G^{k+1}}\int_\RR\\
&\hspace{1cm}{\rm Tr}\left(\hat{B}_0((g_1\cdots g_{k+1})^{-1},\lambda)\circ
\hat{B}_1(g_1,\lambda)\circ\cdots \circ\hat{B}_k(g_k,\lambda)\circ\frac{\partial\hat{B}_{k+1}(g_{k+1},\lambda)}{\partial \lambda}\right)\\
&\hspace{2cm}\varphi (e,g_1,g_1g_2,\ldots,g_1\cdots g_{k})
d\lambda dg_1\cdots dg_{k+1},
\end{align*}
where the notation in \eqref{mellin} has been used, and where we have used the homogeneous differentiable group cohomology complex introduced in Definition \ref{defn:groupcoh-homogeneous}.
\end{definition}
Using Lemma \ref{btrace} one can prove the following result \cite{PP2}:
\begin{proposition}
%Let $Y_0$ be a cocompact proper $G$-manifold with boundary and let $Y$ be the associated $b$-manifold. Let
%$\varphi\in C^k_{\rm diff}(G)$ be a smooth group 
%cochain.
%%$c\in C^k_{{\rm diff},\lambda}(G)$ a cyclic group cochain.
%We have the identities:
%\[
%\left(\begin{matrix} (b+B)&-I^*\\0&-(b+B)\end{matrix}\right)\left(\begin{matrix}\tau^{Y,r}_\varphi\\ \sigma^{\pa Y}_\varphi\end{matrix}\right)=
%\left(\begin{matrix}\tau^{Y,r}_{\delta\varphi}\\ \sigma^{\pa Y}_{\delta\varphi}\end{matrix}\right)
%\]
%with  $I:{}^b \mathcal{A}_{G}^c (Y)\rightarrow {}^b \mathcal{A}_{G,\RR^+}^c  (\cyl(\pa Y))$ the indicial homomorphism.
%\end{proposition}
%Consequently, we obtain the following crucial result:
%\begin{corollary} 
If $\varphi\in Z^{k}_{{\rm diff}} (G)$ is a smooth group {\bf cocycle} then $(\tau^{Y,r}_\varphi,\sigma^{\pa Y}_\varphi)$
is a relative cyclic cocycle for $ {}^b \mathcal{A}_{G}^c (Y)\xrightarrow{I} {}^b  \mathcal{A}_{G,\RR^+}^c  (\cyl(\pa Y)).$
\end{proposition}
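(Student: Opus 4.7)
The plan is to verify the three defining conditions for $(\tau^{Y,r}_\varphi, \sigma^{\partial Y}_\varphi)$ to be a cocycle in the relative cyclic complex attached to the indicial surjection $I$: cyclicity of both cochains, the Hochschild cocycle identity $b\sigma^{\partial Y}_\varphi = 0$, and the compatibility equation $b\tau^{Y,r}_\varphi = I^*\sigma^{\partial Y}_\varphi$. The two ingredients on which everything ultimately rests are the group cocycle identity $\delta\varphi = 0$, used in its homogeneous form from Definition~\ref{defn:groupcoh-homogeneous}, and the $b$-trace defect formula of Lemma~\ref{btrace}. Intuitively, the analogue $\tau^Y_\varphi$ on the closed-manifold algebra $\mathcal{A}^c_G(Y)$ is already a cyclic cocycle; the regularisation to ${}^b\mathcal{A}^c_G(Y)$ obtained by replacing $\Tr_Z$ with ${}^b\Tr_Z$ destroys the cocycle property precisely by the amount Lemma~\ref{btrace} encodes, and the eta cocycle $\sigma^{\partial Y}_\varphi$ is designed to absorb that anomaly.

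Cyclicity I would handle by direct substitution in the defining integrals. For $\tau^{Y,r}_\varphi$, the substitution $(g_1,\ldots,g_k)\mapsto (h, hg_1,\ldots, hg_{k-1})$ with $h=(g_1\cdots g_k)^{-1}$ implements the cyclic permutation of the operator arguments, while transforming $(e,g_1,g_1g_2,\ldots,g_1\cdots g_k)$ into a left-translated tuple handled by the $G$-invariance of the homogeneous cochain $\varphi$; the Jacobian combined with the reversal of order produces the sign $(-1)^k$. The argument for $\sigma^{\partial Y}_\varphi$ is the same, except one uses integration by parts in $\lambda$ to move the derivative $\partial_\lambda$ around the cyclic position and recover the defining formula with cyclically permuted entries.

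For the compatibility equation, I would expand $b\tau^{Y,r}_\varphi(A_0,\ldots,A_{k+1})$ as a signed sum of $k+2$ terms. The first $k+1$ terms, obtained by the contractions $A_iA_{i+1}$, are reorganised by the substitutions $g_i\to g_ig_{i+1}$ in the $G^{k+1}$-integral, producing the alternating sum of $\varphi$ evaluated at face maps of $(e, g_1, g_1g_2,\ldots)$. This alternating sum is exactly $\delta\varphi$ applied to one simplex plus a boundary piece that would cancel the cyclic term $(-1)^{k+1}\tau^{Y,r}_\varphi(A_{k+1}A_0, A_1,\ldots, A_k)$ \emph{if} ${}^b\Tr_Z$ were tracial. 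Since it is not, one applies Lemma~\ref{btrace} to the resulting commutator: the defect is an integral over $\mathbb{R}\times G$ of a trace of Mellin-transformed indicial data, which, upon inserting the prefactor $\varphi$ and the sign $(-1)^{k+1}/(2\pi)$, matches exactly the integrand of Definition~\ref{sigma-cocycle} applied to $(IA_0,\ldots, IA_{k+1})$. Thus the defect is $I^*\sigma^{\partial Y}_\varphi$. That $\sigma^{\partial Y}_\varphi$ is itself a Hochschild cocycle follows from the same algebraic pattern applied to the honest trace $\Tr_{\partial Z}$, which has no defect, so only the $\delta\varphi=0$ relation contributes, together with the observation that the $\partial_\lambda$ factor is handled by Leibniz and telescoping.

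The main obstacle will be the bookkeeping of signs and indexing shifts: reconciling the homogeneous and normalised models of $H^\bullet_{\rm diff}(G)$, getting the factor of $i/(2\pi)$ right out of the Mellin inversion formula~\eqref{inverse-mellin}, and ensuring that, after the substitutions in the $G^{k+1}$-integral, the face maps of $\varphi$ line up so that $\delta\varphi = 0$ produces the intended cancellations. A secondary, purely analytic, point is to check that all $\lambda$-integrals are absolutely convergent with no boundary contributions at $|\lambda|\to\infty$, which is guaranteed by the rapid decay of Mellin transforms of residual operators in ${}^b\mathcal{A}_{G,\mathbb{R}^+}^c(\cyl(\partial Y))$ together with the compact $G$-support and smoothness of the indicial families.
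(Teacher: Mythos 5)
Your proposal matches the route the paper indicates: reduce the relative cocycle conditions to the group cocycle identity $\delta\varphi = 0$ together with the $b$-trace defect formula of Lemma~\ref{btrace}, with $\sigma^{\partial Y}_\varphi$ constructed precisely to absorb the anomaly created by replacing $\Tr_Z$ with ${}^b\Tr_Z$. The paper itself only gestures at the argument (``Using Lemma~\ref{btrace} one can prove\ldots'') and defers the details to \cite{PP2}, but the strategy and key ingredients you identify are the same.
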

%\end{corollary}

%\medskip
%\noindent
%We have worked in the small calculus, but exactly the same steps establish all of the above 
%results for the algebras appearing in the calculus with $\delta$-bounds and with $G$-compact support.

\medskip
\noindent
In addition, one can prove the following:

\begin{proposition}
The pair 
 $(\tau^{Y,r}_\varphi,\sigma^{\pa Y}_\varphi)$ extends continuously to  a relative k-cocycle for 
$${}^b \mathcal{A}^\infty_G (Y)\xrightarrow{I} {}^b \mathcal{A}^\infty_{G,\RR} ({\rm cyl}(\partial Y))\,.$$
Moreover the following formula holds:
\begin{equation}\label{basic-formula-3--} 
 \langle  \tau^{Y}_\varphi,\Ind_\infty (D)\rangle=\langle  (\tau^{Y,r}_\varphi,\sigma^{\pa Y}_\varphi), \Ind_\infty (D,D_\partial )\rangle\,.
 \end{equation}

\end{proposition}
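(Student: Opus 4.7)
The plan is to handle the continuous extension and the pairing formula separately.

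For the continuous extension of $\tau^{Y,r}_\varphi$, I would use the description \eqref{descrption-smoothings-HC} of ${}^b\mathcal{A}^\infty_G(Y,E)$ as $K$-bi-invariant smooth maps $\Phi: G\to {}^b\Psi^{-\infty,\epsilon}(Z)$ for which the Harish-Chandra seminorms $\nu_{V,W,m}$ of the $\Psi^{-\infty,\epsilon}(Z)$-seminorms of $\Phi$ are finite. Three ingredients are needed. First, the product of two elements of ${}^b\mathcal{A}^\infty_G(Y,E)$ preserves Harish-Chandra decay in $G$, because convolution is continuous on $\mathcal{C}(G)$ with tame seminorm estimates; this is the same property that underlies the identification of $\mathcal{A}^\infty_G(Y,E)$ as a holomorphically closed subalgebra of the Roe algebra. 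Second, the $b$-trace ${}^b\Tr_Z$ is continuous as a linear functional on ${}^b\Psi^{-\infty,\epsilon}(Z)$. Third, the cocycle $\chi(\varphi)$ extends continuously from $C^\infty_c(G)^{\hat{\otimes}(k+1)}$ to $\mathcal{C}(G)^{\hat{\otimes}(k+1)}$ by Theorem~\ref{thm:character}. Combining these ingredients bounds $|\tau^{Y,r}_\varphi(A_0,\ldots,A_k)|$ by a finite product of seminorms of the $A_i$.

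For $\sigma^{\partial Y}_\varphi$, the same strategy applies with two modifications. The Mellin transform identifies ${}^b\mathcal{A}^\infty_{G,\RR^+}(\cyl(\partial Y))$ with families $(g,\lambda)\mapsto \hat{\Phi}_B(g,\lambda)$ of holomorphic $\Psi^{-\infty}(\partial Z)$-valued functions on $G\times\{|\Im\lambda|<\epsilon\}$, exhibiting Harish-Chandra decay in $g$ and Schwartz decay in $\Re\lambda$; the derivative $\partial_\lambda$ preserves this class, the $\lambda$-integral converges absolutely, and composition in $\Psi^{-\infty}(\partial Z)$ is jointly continuous in the $\nu_{V,W,m}$ seminorms. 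Combined with the extendability of $\chi(\varphi)$, this yields continuity of $\sigma^{\partial Y}_\varphi$. The relative cocycle property on the smooth algebras then follows by density from the cocycle property on the compact-support algebras established in the previous proposition.

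For the pairing formula \eqref{basic-formula-3--}, the plan is to invoke naturality of the pairing between relative K-theory and relative cyclic cohomology. By Theorem~\ref{theo:excision}, the excision map sends $\Ind_\infty(D)\in K_0(\mathcal{A}^\infty_G(Y))$ to $\Ind_\infty(D,D_\partial)\in K_0({}^b\mathcal{A}^\infty_G(Y),{}^b\mathcal{A}^\infty_{G,\RR^+}(\cyl(\partial Y)))$. On the cyclic side, the restriction of $(\tau^{Y,r}_\varphi,\sigma^{\partial Y}_\varphi)$ to the ideal $\mathcal{A}^\infty_G(Y)$ agrees with the absolute cocycle $\tau^Y_\varphi$: elements of the ideal have vanishing indicial family so $\sigma^{\partial Y}_\varphi$ does not contribute, and for kernels vanishing at the front face the $b$-integral along the slice collapses to the ordinary integral. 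The identity \eqref{basic-formula-3--} then follows from the standard commutative diagram relating the absolute and relative pairings under excision, as used in \cite{LMP,GMPi,moriyoshi-piazza}.

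The main technical obstacle will be the tame seminorm estimate required to control the product of the $\Phi_{A_i}$ in $G$ together with the $b$-pseudodifferential composition on $Z$, once this product is integrated against the (polynomially growing) smooth group cocycle $\varphi$. One must carefully balance the growth of $L_V R_W\varphi$ against the decay imposed by the Harish-Chandra spherical function $\Theta$, in the spirit of the estimates developed in \cite[Appendix A]{st} and used in \cite{PP1,PP2}.
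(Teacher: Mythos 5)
Your proposal is correct and follows essentially the same route as the one outlined in the paper and carried out in \cite{PP2}: extend the group cocycle to $\mathcal{C}(G)$ via Theorem~\ref{thm:character}, push that through the tame Harish-Chandra seminorm estimates on the $b$-pseudodifferential side (using the $b$-analogue of the description \eqref{descrption-smoothings-HC}), obtain the relative cocycle property by density from the compact-support case (which rests on Lemma~\ref{btrace}), and then derive \eqref{basic-formula-3--} by combining Theorem~\ref{theo:excision} with the observation that on the ideal $\mathcal{A}^\infty_G(Y)$ the eta term vanishes and the $b$-integral collapses to the ordinary one, so the relative cocycle restricts to the absolute cocycle $\tau^Y_\varphi$ and the two pairings agree by naturality under excision as in \cite{LMP,GMPi,moriyoshi-piazza}.
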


\begin{example} As explained in Example \ref{ex:sl2}, besides the trivial group cocycle, there is an interesting 
degree $2$ cocycle $A$ given by the area of a hyperbolic triangle in $\mathbb{H}$. The corresponding eta 3-cocycle on $\mathscr{S}(\RR,C^\infty_c(G))$ is given by
\begin{align*}
&\sigma_A(B_0,B_1,B_2,B_3)\\
:=&-\frac{1}{2\pi}\int_{G^3}\int_\RR\hat{B_0}((g_1g_2g_3)^{-1},\lambda)*\hat{B}_1(g_1,\lambda)*\hat{B}_2(g_2,\lambda)*\frac{\partial \hat{B}(g_3,\lambda)}{\partial\lambda}\\
&\hspace{2cm}{\rm Area}(\Delta_{\mathbb{H}}([e],g_1[e],g_1g_2[e]))d\lambda dg_1dg_2dg_3.
\end{align*}
\end{example}

\subsection{Higher APS index theorem associated to a group cocycle $\varphi\in Z^k_{{\rm diff}} (G)$}
%\subsection{Higher genera on manifolds with boundary}
Using \eqref{basic-formula-3--}, proceeding as we did for the delocalized trace ${\rm tr}^Y_g$  (and for the corresponding relative cocycle
$({\rm tr}^{Y,r}_g,\sigma^{\pa Y}_g)$), using the heat kernel approach to the Pflaum-Posthuma-Tang 
index formula developed in \cite{PP2},  one can establish the following higher Atiyah-Patodi-Singer index theorem

\begin{theorem}\label{maintheorem-PP-HP}
Let $Y_0$, $Y$ and $D$ as above. Assume that
the boundary operator $D_\partial$ is $L^2$-invertible. 
We consider $[\varphi]\in H^{2p}_{{\rm diff}}(G)$ and
$$\Ind_{\varphi}(D):= (-1)^p \frac{2p !}{p!}\langle \tau^Y_\varphi,  \Ind_\infty (D)\rangle.$$ Then
\begin{equation}\label{main-PP-HP}
\Ind_{\varphi}(D)=\int_Y c_Y {\rm AS}(Y) \wedge \Phi([\varphi])- \frac{1}{2}\eta_\varphi (D_\partial)\equiv  \int_{Y_0} c_{Y_0} {\rm AS}(Y_0) \wedge \Phi([\varphi]) - \frac{1}{2}\eta_\varphi (D_\partial) \,,
\end{equation}
where
 \begin{equation}
 \eta_{\varphi} (D_{\partial}):=
c_p\,
\left[ \sum_{i=0}^{2p}   \int_0^{\infty}\sigma^{\pa Y}_\varphi (p_t,\dots,[\dot{p}_t, p_t],\dots,p_t)dt
\right]
%$$
 %\frac{2(2p+1)}{p!} \int_0^{+\infty}\sigma_{c} ([\dot{p_t},p_t],p_t,\dots,p_t,p_t)dt
 \end{equation}
 with $p_t=V (t D_{{\rm cyl}})$   and $c_p=(-1)^p \frac{2p !}{p!}$.
\end{theorem}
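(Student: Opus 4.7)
The plan is to follow exactly the template already used in the excerpt for the $0$-degree delocalized APS theorem (Theorem \ref{theo:0-delocalized-aps}), but with the relative cocycle $(\tau^{Y,r}_\varphi,\sigma^{\partial Y}_\varphi)$ in place of $(\mathrm{tr}^{Y,r}_g,\sigma^{\partial Y}_g)$ and with the Pflaum--Posthuma--Tang higher index formula (Theorem \ref{thm:PPT-index}) replacing the Hochs--Wang closed formula at the end. First I would invoke the fundamental identity \eqref{basic-formula-3--}
\[
\langle \tau^Y_\varphi,\mathrm{Ind}_\infty(D)\rangle=\langle (\tau^{Y,r}_\varphi,\sigma^{\partial Y}_\varphi),\mathrm{Ind}_\infty(D,D_\partial)\rangle,
\]
so that, up to the normalisation constant $c_p=(-1)^p (2p)!/p!$, it suffices to compute the right-hand side on the explicit relative index cycle $(V_{\mathrm{CM}}(D),e_1,V_{\mathrm{CM}}(tD_{\mathrm{cyl}}))$, $t\in[1,\infty]$, provided by Proposition \ref{prop:relative-indeces} and Theorem \ref{theo:excision}.

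Next I would expand the relative pairing by its defining formula: for a relative $2p$-cocycle paired with a triple $(V,e_1,q_t)$ one obtains a bulk term built out of $V_{\mathrm{CM}}(D)-e_1$ evaluated against $\tau^{Y,r}_\varphi$, plus a transgression term of the shape
\[
c_p\sum_{i=0}^{2p}\int_1^\infty \sigma^{\partial Y}_\varphi\bigl(q_t,\dots,[\dot q_t,q_t],\dots,q_t\bigr)\,dt,
\]
the sum running over the position of the commutator. I would then perform the rescaling $D\leadsto sD$ for $s\in(0,1]$: the left-hand side of \eqref{basic-formula-3--} is independent of $s$, so the identity between the bulk and transgression terms holds for every $s>0$ after replacing the lower endpoint $1$ of the $t$-integral by $s$ and the idempotents by their $s$-rescaled versions. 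The desired formula is then obtained by taking $s\downarrow 0$ and separately identifying the two limits.

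For the bulk term I would carry out the heat-kernel/Getzler-rescaling analysis in the $b$-calculus that Piazza--Posthuma developed for the closed case in \cite{PP2}. The point is that the integrand of $\tau^{Y,r}_\varphi$ evaluated on $V_{\mathrm{CM}}(sD)$ is, after the van Est map, a $b$-integral on $Y$ of the supertrace of a Getzler-type expression; the boundary contribution to the $b$-integral  vanishes in the limit $s\downarrow 0$ because, under the invertibility Assumption \ref{assumption:invertibility}, the indicial family of the relevant heat kernel decays, and what remains is
\[
\int_{Y_0}c_{Y_0}\,\mathrm{AS}(Y_0)\wedge \Phi([\varphi]),
\]
exactly as in the closed Pflaum--Posthuma--Tang formula. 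For the transgression term, the $\sigma^{\partial Y}_\varphi$-integrand is, by Definition \ref{sigma-cocycle} and Proposition \ref{prop:from-cocycle-to-eta} (generalised to higher degree by a straightforward Duhamel/rearrangement argument), an $s$-time slice of the higher eta density $\sigma^{\partial Y}_\varphi(p_t,\dots,[\dot p_t,p_t],\dots,p_t)$ with $p_t=V(tD_{\mathrm{cyl}})$; integrability at $t=+\infty$ is granted by the $L^2$-invertibility of $D_\partial$, and the remaining $t\to 0$ tail together with the bulk limit exhausts the identity, yielding $-\tfrac12\eta_\varphi(D_\partial)$.

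The main obstacle is the small-$s$ limit of the bulk term: extending Getzler rescaling to the $b$-calculus with coefficients in the higher cocycle $\Phi([\varphi])$ requires simultaneously controlling the conormal behaviour at the front face of $Y\times_b Y$, the $G$-orbital averaging encoded in $\tau^{Y,r}_\varphi$, and the combinatorics of the group cocycle in the Connes--Moscovici formula for $V_{\mathrm{CM}}(sD)$. Everything else is a careful bookkeeping exercise combining the relative cyclic machinery already set up in the paper with the heat-kernel technique of \cite{PP2}, the fact that $\tau^Y_\varphi$ extends continuously to $\mathcal{A}^\infty_G(Y,E)$ (Proposition \ref{extend}), and the normalisation constant $c_p$ that converts cyclic-cohomological pairings into characteristic-class integrals.
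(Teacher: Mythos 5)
Your proposal is correct and mirrors the paper's own proof strategy, which is laid out in the paragraphs preceding the theorem statement: invoke the relative-pairing identity \eqref{basic-formula-3--}, compute the pairing on the explicit relative cycle, rescale $D\rightsquigarrow sD$, and take $s\downarrow 0$ with the bulk term handled by the $b$-calculus Getzler-rescaling argument from \cite{PP2} and the transgression term producing $-\tfrac12\eta_\varphi(D_\partial)$. The only small imprecision is that the theorem's formula uses the symmetrized projector $V$ from Definition \ref{def:true-CM} rather than $V_{\mathrm{CM}}$ alone (so there is a factor-of-two bookkeeping as in \eqref{twice}), but this is a normalisation matter and does not change the structure of the argument.
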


For more details we refer to \cite{PP2}.

\subsection{Higher delocalized APS index theorem}
We finally come to the higher delocalized cyclic cocycles $\Phi^P_{Y,g}$, with $P$ a cuspidal parabolic subgroup with Langlands decomposition $MAN$
and $g$ a semisimple element in $M$.
The cyclic cocycle $\Phi^P_{Y,g}$ on $ {\mathcal{A}}^c_G(Y)$ can be written explicitly as
\[
\begin{aligned}
&\Phi^P_{Y,g}(A_0, A_1, \dots, A_m) \\
:=&\int_{h \in M/Z_M(g)} \int_{K N} \int_{G^{\times m}} C\big( H(g_1...g_mk) , H(g_2...g_mk ), \dots , H(g_mk) \big)\\
& \operatorname{Tr}\Big( A_0 \big (kh g h^{-1}nk^{-1} (g_1\dots g_m)^{-1}\big)\circ A_1(g_1) \dots \circ A_m(g_m) \Big) dg_1\cdots dg_m dk dn dh.
\end{aligned}
\]

Substituting the trace over $Y$ with the $b$-trace we define the cochain $\Phi^{P,r}_{Y,g}$ over  $ {^b \mathcal{A}}^c_G(Y)$.
We also have the corresponding eta cochain $\sigma^P_{\partial Y,g}$ on ${^b \mathcal{A}}^c_{G, \mathbb{R}}(\text{cyl}(Y))$:  for  $B_0,...,B_{m+1}\in {^b \mathcal{A}}^c_{G, \mathbb{R}}(\text{cyl}(Y))$
\[
\begin{aligned}
&\sigma^P_{\partial Y,g}(B_0, ..., B_{m+1})\\
:= &\int_{h \in M/Z_M(g)} \int_{K N} \int_{G^{\times {m+1}}} \int_{\mathbb{R}}C\big( H(g_1...g_mk) , H(g_2...g_mk ), \dots , H(g_mk) \big) \\
&\qquad {\operatorname{Tr}}\Big( \hat{B}_0 \big (kh g h^{-1}nk^{-1} (g_1\dots g_mg_{m+1})^{-1}, \lambda \big)\circ \hat{B}_1(g_1, \lambda)\circ \dots \circ \hat{B}_m(g_m, \lambda)\\
&\qquad\qquad \circ \frac{\partial \hat{B}_{m+1}(g_{m+1}, \lambda)}{\partial \lambda} \Big) dg_1\cdots dg_{m+1} dk dn dhd\lambda,
\end{aligned}
\]
where the notation in \eqref{mellin} has been used.
%where we have used the Fourier transform
%\[
%{}^b \mathcal{A}_{G,\RR}^c  (\cyl (Y))\ni A \longrightarrow  \widehat{A} \in \mathscr{S}(\RR,\mathcal{A}^c_{G}(Y)).
%\]
One proves that the pair $(\Phi^{r,P}_{Y,g}, \sigma^P_{\partial Y,g})$ defines a relative cyclic cocycle for the homomorphism
 $ {^b \mathcal{A}}^c_G(Y)\xrightarrow{I}   {}^b \mathcal{A}^c_{G,\RR}(\cyl(\partial Y))$.
Moreover:
\begin{itemize}
\item  the relative cyclic cocycle  $(\Phi^{r,P}_{Y,g}, \sigma_{\partial Y,g})$   extends 
continuously from the pair  $ {^b \mathcal{A}}^c_G(Y)\xrightarrow{I}   {}^b \mathcal{A}^c_{G,\RR}(\cyl(\partial Y))$
to the pair  $ {^b \mathcal{A}}^\infty_G(Y)\xrightarrow{I}   {}^b \mathcal{A}^\infty_{G,\RR}(\cyl(\partial Y))$.  
\item the crucial formula  $ \langle  \Phi_{Y,g}^P,\Ind_\infty (D)\rangle=\langle  (\Phi_{Y,g}^{P,r},,\sigma^P_{\pa Y,g}), \Ind_\infty (D,D_\partial )\rangle$ holds.
\end{itemize}
Using the last formula and proceeding as in the previous cases we arrive at the following result:\\
for each $s\in (0,1]$ %the following formula holds:
\[
c_m \langle \Ind_\infty(D), [\Phi^P_{Y,g}]\rangle=
\Phi^{P,r}_{Y,g} (V(sD), \dots , V(sD))- \frac{1}{2}\int_s^\infty \eta^P_g(t)dt
\]
with 
$c_m=(-1)^{\frac{m}{2}}\frac{m!}{(\frac{m}{2})!}$ and $\eta^P_g (t):=2 c_m  \sum_{i=0}^{m} \sigma^P_{\partial Y,g} (p_t, ..., [\dot{p}_t, p_t], ..., p_t)$.
Part of the statement is of course that the $t$-integral converges at $+\infty$.

We would like to take the limit as $s\downarrow 0$; unfortunately we do not know how to compute the limit
of the first term on the right hand side (this should produce the local term in the index formula); in fact we cannot compute this limit even in the closed case.

\medskip

\noindent
{\bf Open problem.} Let $P<G$ a cuspidal parabolic subgroup with Langland's decomposition $P=MAN$ and let $g\in M$ be a semisimple element.
Let $D$ be a $G$-equivariant Dirac operator on a  closed cocompact $G$-proper manifold $X$; let $V(D)$ be the symmetrized Connes-Moscovici projector.
Can one prove that
$$\lim_{s\downarrow 0} \Phi^P_{X,g}(V(sD),\dots,V(sD))$$
exists ? If so, can one give an explicit formula for it ?

\medskip
\noindent
Because of these difficulties we go back to the proof of the higher delocalized index formula  in the closed case by reduction.
Thus we consider $Y_M:=Y/AN$, an $M$-proper manifold,  which has a slice decomposition  given by  $Y_M:=M\times_{K\cap M} Z$.
 The arguments of Hochs, Song and Tang in Theorem \ref{thm:reduction} can
be extended (with some efforts) to the case of manifolds with boundary yielding the following theorem (which is one of the main results in \cite{PPST}):

\begin{theorem}
\label{main thm}Suppose that the metric on $Y$ is slice-compatible. 
Assume that  $D_{\partial Y}$ is $L^2$-invertible and consider the higher 
index $\langle \Phi^P_{Y,g}, \Ind_\infty (D)\rangle$.
The following formula holds:
\[
\langle \Phi^P_{Y,g}, \Ind_\infty (D_Y)\rangle=
\int_{(Y_0/AN)^g} c^g_{(Y_0/AN)^g} {\rm AS}(Y_0/AN)_g-\frac{1}{2} \eta_g (D_{\partial Y_{M}}) 
\]
with 
$$\eta_g (D_{\partial Y_{M}}) =\frac{1}{\sqrt{\pi}} \int_0^\infty {\rm tr}^{\partial Y_{M}}_g (D_{\partial Y_{M}} \exp (-tD^2_{\partial Y_{M}}) \frac{dt}{\sqrt{t}}.$$
Here $c^g_{(Y_0/AN)^g}$ is a compactly supported smooth cutoff function on $(Y_0/AN)^g$ associated to the $Z_{M, g}$ action on $(Y_0/AN)^g$. 
\end{theorem}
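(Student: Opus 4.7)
The plan is to reduce the higher delocalized APS pairing on $Y$ to a $0$-degree delocalized APS pairing on the quotient $M$-proper $b$-manifold $Y_M := Y/AN$, and then invoke Theorem \ref{theo:0-delocalized-aps}. This is the analogue, in the boundary setting, of the Hochs--Song--Tang reduction strategy used in Theorem \ref{thm:reduction} to reduce $\Phi^P_{X,g}$-pairings on closed manifolds $X$ to orbital-integral pairings on $X/AN$. Schematically, the goal is to establish
\begin{equation*}
\langle \Phi^P_{Y,g}, \Ind_\infty(D_Y)\rangle \;=\; \langle \tr^{Y_M}_g, \Ind_\infty(D_{Y_M})\rangle,
\end{equation*}
after which Theorem \ref{theo:0-delocalized-aps} applied on the $M$-proper manifold $Y_M$ yields the stated formula.

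First, I would verify that the geometric and analytic setup descends cleanly to $Y_M$. Slice compatibility gives $Y \cong G\times_K Z$ and $Y_M \cong M\times_{K\cap M} Z$, so both $Y$ and $Y_M$ acquire $b$-structures from the $b$-manifold $Z$, with $\partial Y_M = \partial Y / AN$. The $K\cap M$-invariant decomposition $\mathfrak{p} = (\mathfrak{p}\cap\mathfrak{m}) \oplus \mathfrak{a} \oplus (\mathfrak{k}/(\mathfrak{k}\cap\mathfrak{m}))$ and the split form \eqref{dec dirac} of $D$ (which has the same principal symbol, hence the same index class) show that $D_Y$ decomposes, up to an index-preserving deformation, into $D_{Y_M}$ plus a fiberwise operator along $AN$, and in particular the $L^2$-invertibility of $D_{\partial Y}$ transfers to $L^2$-invertibility of $D_{\partial Y_M}$. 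This ensures that Theorem \ref{b-index-class-C*} and Theorem \ref{theo:smooth-index-CM-bis} produce a well-defined relative smooth index class $\Ind_\infty(D_{Y_M}, D_{\partial Y_M})$.

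Second, I would extend the reduction at the level of cyclic cocycles and relative index classes. Using the isomorphism of \eqref{iso-c} (and its Harish--Chandra $b$-analogue), integration of Schwartz kernels over $AN$ defines a continuous algebra homomorphism $\mathcal{R}: {}^b\mathcal{A}^\infty_G(Y) \to {}^b\mathcal{A}^\infty_M(Y_M)$, compatible with the indicial homomorphism $I$. The key computation is that the pullback of $\tr^{Y_M}_g$ under $\mathcal{R}$ equals $\Phi^P_{Y,g}$: this is essentially how Song--Tang constructed $\Phi^P_g$ from the Iwasawa decomposition in Definition \ref{defn:higherorbitalintegral}, the determinant $C(H(g_1\cdots g_m k), \ldots)$ being exactly what is produced by iterated integration of the kernel along $\mathfrak{a}$. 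Combined with the analogous pullback identity for the eta cocycle $\sigma^P_{\partial Y,g} \leftrightarrow \sigma^{\partial Y_M}_g$, one obtains the equality of relative pairings
\begin{equation*}
\langle (\Phi^{P,r}_{Y,g}, \sigma^P_{\partial Y,g}), \Ind_\infty(D_Y, D_{\partial Y})\rangle = \langle (\tr^{Y_M,r}_g, \sigma^{\partial Y_M}_g), \Ind_\infty(D_{Y_M}, D_{\partial Y_M})\rangle,
\end{equation*}
and excision (Theorem \ref{theo:excision}) together with formulas \eqref{basic-formula-3-} and the higher version for $\Phi^P_{Y,g}$ then gives the displayed reduction identity for the absolute pairings. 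Finally, Theorem \ref{theo:0-delocalized-aps} applied to the $M$-proper $b$-manifold $Y_M$ with semisimple $g\in M$ yields exactly the right-hand side of Theorem \ref{main thm}.

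The main obstacle is the second step. Writing down the homomorphism $\mathcal{R}$ and checking that it is continuous for the Harish-Chandra $b$-topology is delicate because one must control the behaviour of Schwartz kernels both along the noncompact $AN$-directions (invoking the Harish--Chandra estimates built into Definition \ref{defn:HC}) and along the front face of $Y \times_b Y$ (invoking the conormal $\epsilon$-bounds). Equally subtle is the identification, at the level of indicial families obtained by Mellin transform, of $I(\mathcal{R}(A))$ with the indicial family of $A$ pushed forward along $AN$, which is what allows the eta cocycles to correspond. Once this bookkeeping is established, the rest of the proof is a direct concatenation of previously proved results: excision, the $\Phi^P$-version of \eqref{basic-formula-3--}, the reduction identity, and Theorem \ref{theo:0-delocalized-aps}.
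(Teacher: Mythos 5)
Your overall strategy coincides with the one the paper sketches: reduce the higher delocalized APS pairing on $Y$ to the $0$-degree delocalized APS pairing on $Y_M=Y/AN$, and then invoke Theorem \ref{theo:0-delocalized-aps} on the $M$-proper $b$-manifold $Y_M$. The paper only says the Hochs--Song--Tang reduction of Theorem \ref{thm:reduction} ``can be extended (with some efforts)'' to the boundary case, so identifying that reduction as the key input is the right move.

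The gap is in the step you propose to carry out the reduction. You claim that the pullback of the $0$-cochain $\tr^{Y_M}_g$ along an algebra homomorphism $\mathcal{R}:{}^b\mathcal{A}^\infty_G(Y)\to{}^b\mathcal{A}^\infty_M(Y_M)$ equals $\Phi^P_{Y,g}$, and similarly for the eta cocycles. This cannot hold as stated: pulling back a cyclic cochain along any algebra homomorphism preserves its degree, so $\mathcal{R}^*\tr^{Y_M}_g$ is again a $0$-cochain (a trace), whereas $\Phi^P_{Y,g}$ is an $m$-cochain with $m=\dim A$ (and $\sigma^{\partial Y_M}_g$ is a $1$-cochain, whereas $\sigma^P_{\partial Y,g}$ is an $(m+1)$-cochain). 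The determinant $C\big(H(g_1\cdots g_m k),\ldots\big)$ of Definition \ref{defn:higherorbitalintegral} is precisely the mechanism responsible for the degree jump — it encodes the group $m$-cocycle of $A\cong\mathbb{R}^m$ — and you even note that it arises from an $m$-fold iterated integration along $\mathfrak{a}$; but that observation is inconsistent with presenting the reduction as a pullback identity. The actual content of Theorem \ref{thm:reduction} is an equality of pairings across different cohomological degrees, obtained by unwinding the $G^{\times(m+1)}$ integral in $\langle\Phi^P_{Y,g},\Ind_\infty(D)\rangle$ via the Iwasawa decomposition $G=KMAN$, peeling off the $m$-fold integral over $\mathfrak{a}$ (which produces the determinant), and identifying what remains with $\tr^M_g$ applied to the reduced Connes--Moscovici idempotent. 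In the boundary case one must also track the Mellin-transformed indicial families through this unwinding so that $\sigma^P_{\partial Y,g}$ matches the $1$-cocycle $\sigma^{\partial Y_M}_g$. None of this is a degree-preserving pushforward, and without making the degree-changing mechanism precise your middle step does not close.
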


\section{Geometric applications}\label{sec:application}
Now that we have explained various index theorems associated to a $G$-equi-variant Dirac operator on a $G$-proper manifold,
we discuss some geometric applications.

\subsection{Higher genera}
We begin by observing that  the homogeneous  space 
$G\slash K$ is a smooth model for $\underline{E}G$, the classifying space for proper actions of $G$, see \cite{BCH}: for any smooth proper action
of $G$ on a manifold $X$, there exists a smooth $G$-equivariant classifying map $\psi_X:X\to G\slash K$, unique up to $G$-equivariant homotopy.
For any proper action of $G$ on manifold  $X$ we consider $ \Omega^\bullet_{{\rm inv}} (X)$, the complex of $G$-invariant differential forms
on $X$ and its cohomology denoted by $H^\bullet_{\rm inv}(X)$. For a connected real reductive linear group $G$, we have the Van Est isomorphism:
$H^*_{{\rm diff}}(G)\simeq H^\bullet_{\rm inv}(G/K)$.
Consider now 
$[\alpha] \in H^\bullet_{{\rm inv}} (G/K)$ and let
 $\alpha\in\Omega^\bullet_{\rm inv}(G\slash K)$ a representative; consider its pull-back $\psi_X^*\alpha\in \Omega^\bullet_{\rm inv}(X)$ such that $[\psi_X^*\alpha]=\Phi([\alpha])$.
The higher signature associated to $[\alpha]$ is the real number
\begin{equation}\label{higher-s}
\sigma (X,[\alpha]) : = \int_X c_X L(X)\wedge \psi_X^* (\alpha),
\end{equation}
where $L(X)$ is the invariant de Rham form representing the $L$-class of $X$. 
The insertion of the cut-off function $c_X$, which as we know has compact support, ensures that the integral is well-defined (and it can be shown that it only depends on the class $ [L(X)\wedge \psi_X^* (\alpha)] \in H^\bullet_{\rm inv}(X)$).
The collection 
\begin{equation}\label{higher-s-collection}
\{\sigma (X,[\alpha]), [\alpha] \in H^\bullet_{{\rm inv}} (G/K)\}\end{equation}
are called the higher signatures of $X$; by the Van Est isomorphism they are labelled by the elements in $H^*_{{\rm diff}} (G)$.
Similarly, the higher $\widehat{A}$ genus associated to $X$ and to $[\alpha] \in H^\bullet_{{\rm inv}} (G/K)$
is the real number
\begin{equation}\label{higher-a}
\widehat{A}  (X,[\alpha]) : = \int_X c_X \widehat{A}(X)\wedge \psi_X^* (\alpha)\end{equation}
with $\widehat{A}(X)$ the de Rham class associated to the $\widehat{A}$-differential form for a $G$-invariant metric. The collection 
\begin{equation}\label{higher-a-collection}
\{\widehat{A}  (X,[\alpha]), \alpha \in H^\bullet_{{\rm inv}} (G/K)\}\end{equation}
are called the higher $\widehat{A}$-genera  of $Y$.\\
We have:

\begin{theorem}\label{theo:main}
Let $G$ be a connected real reductive Lie group. Let $X$ be an orientable manifold with a proper, cocompact action of $G$. Then the following holds true:
\begin{itemize}
\item[$(i)$] each higher signatures $\sigma (X,[\alpha])$, $[\alpha] \in H^\bullet_{{\rm inv}} (G/K)$,
is a $G$-homotopy invariant of $X$.
\item[$(ii)$] if $X$ admits a $G$-invariant Spin structure and a $G$-invariant metric of positive scalar curvature\footnote{from now on we shall briefly write PSC}
then each higher $\widehat{A}$-genus $\widehat{A}  (X,[\alpha])$, $[\alpha] \in H^\bullet_{{\rm inv}} (G/K)$,
vanishes.
\end{itemize}
\end{theorem}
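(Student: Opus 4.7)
The plan is to deduce both statements from Theorem \ref{thm:PPT-index} applied, respectively, to the $G$-equivariant Spin-Dirac operator (for (ii)) and the $G$-equivariant signature operator (for (i)); in each case the relevant higher genus/signature is identified, up to a nonzero multiplicative constant, with the pairing of a cyclic cocycle coming from $H^*_{\rm diff}(G)$ with a $K$-theoretic index class. The link between the van Est image $\Phi([\varphi])\in H^*_{\rm inv}(X)$ appearing on the right-hand side of Theorem \ref{thm:PPT-index} and the pullback $\psi_X^*\alpha$ appearing in \eqref{higher-s}--\eqref{higher-a} comes from the naturality of the van Est map together with the classifying property of $G/K$ for proper $G$-actions; I would take this identification for granted.

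For part (ii) I would consider the $G$-equivariant Spin-Dirac operator $D$ associated to the given $G$-invariant Spin structure and PSC metric. By Lichnerowicz $D^2 = \nabla^*\nabla + \tfrac{\scal}{4}$, so cocompactness together with the $G$-invariance and strict positivity of $\scal$ yields a uniform bound $D^2 \geq \varepsilon\,\Id$ for some $\varepsilon>0$, and $D$ is $L^2$-invertible. A standard deformation argument then shows that the Connes--Moscovici projector $V_{{\rm CM}}(D)$ is homotopic to $e_1$ through idempotents with entries in $\mathcal{A}^\infty_G(X,\mathcal{S})$, so $\Ind_{C^*_r(G)}(D)=0$ in $K_0(C^*_r(G))$. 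Since $\ch(F)=1$ in the untwisted spin case, Theorem \ref{thm:PPT-index} gives
\[
0 \;=\; \chi(\varphi)\bigl(\Ind_{C^*_r(G)}(D)\bigr) \;=\; \frac{1}{(2\pi i)^k (2k)!}\int_X c_X\, \widehat{A}(X)\wedge \psi_X^*\alpha
\]
for every $[\alpha]\in H^*_{\rm inv}(G/K)$; that is, every higher $\widehat{A}$-genus $\widehat{A}(X,[\alpha])$ vanishes.

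For part (i) I would apply the $L$-class variant of Theorem \ref{thm:PPT-index} to the $G$-equivariant signature operator $D_{\rm sign}$, thereby identifying $\chi(\varphi)(\Ind_{C^*_r(G)}(D_{\rm sign}))$ with a nonzero multiple of $\sigma(X,[\alpha])$. It thus suffices to show that $\Ind_{C^*_r(G)}(D_{\rm sign})\in K_0(C^*_r(G))$ is itself a $G$-homotopy invariant of $X$: pairing with the fixed class $\chi(\varphi)$ then forces each higher signature to be a $G$-homotopy invariant. The strategy I would follow adapts the Hilsum--Skandalis/Kasparov construction of a $KK$-equivalence between the signature $K$-homology classes of two homotopy-equivalent manifolds to the cocompact $G$-proper setting, and then pushes forward the resulting equality to $K_0(C^*_r(G))$ via the equivariant assembly map.

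The main obstacle is precisely this last step. While the Hilsum--Skandalis argument is classical for $\Gamma$-coverings, its extension to cocompact proper actions of a connected real reductive Lie group $G$ requires handling the transverse Lie algebra directions (the $\mathfrak{p}$ factor in Abels' slice decomposition $X\cong G\times_K Z$) and realising the resulting homotopy inside the smooth holomorphically closed subalgebra $\mathcal{A}^\infty_G(X,E)$ so that the pairing with $\chi(\varphi)$ can be taken. Once this $G$-homotopy invariance of the signature index class is established, both (i) and (ii) follow formally from the higher index formula of Theorem \ref{thm:PPT-index}.
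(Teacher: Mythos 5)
Your proposal follows essentially the same route as the paper: deduce both statements from the higher index formula (Theorem~\ref{thm:PPT-index}) combined with, respectively, the vanishing of the Spin-Dirac $C^*$-index class under a $G$-invariant PSC metric and the $G$-homotopy invariance of the signature $C^*$-index class. The only thing worth pointing out is that the step you flag as ``the main obstacle''---adapting the Hilsum--Skandalis/Kasparov argument to the cocompact $G$-proper setting to obtain $G$-homotopy invariance of $\Ind_{C^*_r(G)}(D_{\rm sign})$---is not actually left open: it is precisely what Fukumoto's result \cite{Fukumoto} supplies, and similarly the PSC vanishing you sketch via Lichnerowicz is established in this setting by Guo--Mathai--Wang \cite{guo-mathai-wang-psc}; the paper simply invokes these two references rather than reproving them.
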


%Again, let us clarify the relationship
%between \cite{PP1} and what we do here. 
The theorem follows easily from the higher index formula 
for differentiable group cocycles explained in the first part of this article and the usual stability properties of the $C^*$ index class of the signature operator and 
of the Spin-Dirac operator, established in this context by Fukumoto \cite{Fukumoto} and Guo-Mathai-Wang \cite{guo-mathai-wang-psc}. 
As we have already explained, the theorem in fact holds more generally for  $G$ a Lie group with finitely many connected components satisfying property RD, and such that $G/K$
is of non-positive sectional curvature for a maximal compact subgroup $K$. 
See \cite{PP1} for more details.

The corresponding APS index theorems can be used to introduce relative higher genera
$\sigma (X,\partial X, [\alpha]) $ and $\widehat{A}  (X,\partial X, [\alpha])$ and prove, for example, additivity results for closed manifolds
that are obtained by gluing manifolds with boundary  along $G$-diffeomorphic boundaries. In addition, the relative
higher $\widehat{A}$-genera can be used to produce obstructions to the existence of an isotopy from a $G$-invariant PSC metric on $\partial X$ to a
$G$-invariant metric on $\pa X$ which extends to a $G$-invariant metric which is PSC on all of $X$.

\subsection{Rho invariants}

In this subsection we shall briefly introduce (higher) rho numbers associated to positive scalar curvature  metrics and
$G$-equivariant homotopy equivalences.
All our Dirac operators will be  $L^2$-invertible; indeed, if we want
to consider bordism properties of these numbers we do need $L^2$-invertibility so as to be able to define
an  APS index class on the manifold with boundary realising the bordism.

\medskip
\noindent
{\bf Rho numbers associated to delocalized 0-cocycles.}
We consider a closed $G$-proper manifold $X$ {\bf without} boundary, $G$ connected, linear real reductive, $g\in G$ a semisimple element,
$D_X$ a $G$-equivariant $L^2$-invertible Dirac operator of the form (\ref{dec dirac}). We consider 
$$\eta_g (D_X):=\frac{1}{\sqrt{\pi}} \int_0^\infty {\rm tr}^X_g (D_{X} \exp (-tD^2_{X}) \frac{dt}{\sqrt{t}}.$$
 Let $X$ be $G$-equivariantly Spin  and $D_X\equiv D_{\mathbf{h}}$, the Spin Dirac operator
 associated to a $G$-equivariant PSC metric $\mathbf{h}$. Then we know that $D_\mathbf{h}$ is $L^2$-invertible. We define 
$$\rho_g (\mathbf{h}):= \eta_g (D_{\mathbf{h}})\,.$$
If, on the other hand, $\mathbf{f}:X_1\to X_2$ is a $G$-homotopy equivalence, then using \cite{Fukumoto}
we know that there exists a bounded perturbation $B_\mathbf{f}$ of the signature operator on $X:= X_1\sqcup (-X_2)$, where $-X_2$ is the same manifold as $X_2$ with the opposite orientation,  that
makes it invertible. Moreover, one can prove  that this perturbation $B_{\mathbf{f}}$ is in 
$C^* (X,\Lambda^*)^G$. Hence, by density, we conclude that there exists a perturbation 
$B^\infty _{\mathbf{f}}\in \mathcal{A}^\infty_G (X,\Lambda^*)$ such that $D^{{\rm sign}}_X+B^\infty_{\mathbf {f}}$ is $L^2$-invertible.
It is possible to extend  Theorem \ref{thm etadefine} to this perturbed situation  and define the rho-number of the homotopy equivalence $\mathbf {f}$ as
$$\rho_g (\mathbf{f}):= \eta_g  (D^{{\rm sign}}_X+B^\infty_\mathbf{f})\,.$$
We refer to \cite{PPST2} for a detailed discussion of the index theory associated to perturbed operators such as the one appearing above.

\medskip
\noindent
{\bf Rho numbers associated to higher delocalized cocycles}
We can generalize the above definitions and define rho numbers associated
 the higher cocycles $\Phi^P_{g}$. More precisely,
let $P=MNA$, $g\in M$ as above and consider $\Phi_g^P$ and $\Phi_{X,g}^P$ (we recall that $X$ is without boundary). 
Assume that $\mathbf{h}$ is a slice-compatible 
$G$-invariant PSC metric on $X$. %, as in \cite{MGW}. 
Then  
\begin{equation}\label{rho-P}
\rho_g^P (\mathbf{h}):= \eta_g (D_{X_M})
\end{equation}
(with $X_M$ the reduced manifold associated to $X$)
 is well defined. Notice that it is proved \cite{PPST} that if  $D_X$  invertible, then $D_{X_M}$ is also invertible.

\medskip
\noindent
{\bf Bordism properties.}
The APS index theorems introduced in this article can be used to study the bordism properties 
of these rho invariants.
We concentrate on the case of psc metrics.
We assume, unless otherwise stated, that we are on a $G$-proper manifold which is endowed 
with a slice-compatible 
$G$-invariant metric and a slice-compatible $G$-invariant Spin structure.  %\textcolor{magenta}{Is this last sentence correct ?}

Let $(Y,\mathbf{h}_0)$ and $(Y,\mathbf{h}_1)$ be two slice-compatible  
psc metrics. We say that they are $G$-concordant if there exists a $G$-invariant metric
$\mathbf{h}$ on $Y\times [0,1]$ %, also slice compatible,  
 which is of PSC, product-type near the boundary and restricts to $\mathbf{h}_0$ at $Y\times \{0\}$ and to
$\mathbf{h}_1$ at $Y\times \{1\}$.

The following Proposition is an example of the applications one can  envisage for these secondary invariants. Before stating it,
we remark  that if $g$ is non-elliptic, that is, does not conjugate to a compact element, then every element of the conjugacy class $C(g):=\{hgh^{-1}|h\in G\}$ in $G$ does not have any fixed point on a $G$-proper manifold. 

%\begin{definition}\label{def:bordant}
%Let $(Y_0,\mathbf{h}_0)$ and $(Y_1,\mathbf{h}_1)$ be two $G$-proper manifolds with slice-compatible psc metrics. 
%We shall say that they are $G$-psc-bordant  if there exists a $G$-manifold with boundary $W$ with a $G$-invariant metric $\mathbf{h}$, which is  slice-compatible,
%such that:\\
%(i) $\partial W=Y_0\sqcup Y_1$;\\
%(ii) $\mathbf{h}$ is product-type near the boundary and of psc;\\
%(iii) $\mathbf{h}$ restricts to $\mathbf{h}_0\sqcup \mathbf{h}_1$ on  $\partial W=Y_0\sqcup Y_1$
%\end{definition}

 \begin{proposition}\label{prop:bordism}$\;$\\
 1] Assume that the $G$-invariant %slice-compatible 
 psc metrics  $\mathbf{h}_0$ and $\mathbf{h}_1$ on $Y$  are $G$-concordant.
 Assume that $g$ is non-elliptic on $Y$. Then $\rho_g (\mathbf{h}_0)=\rho_g (\mathbf{h}_1)$.\\
 2] Let $P=MAN<G$ be a cuspidal parabolic subgroup and let $x\in M$ be a semisimple element.
 Assume that the $G$-invariant slice-compatible
 psc metrics  $\mathbf{h}_0$ and $\mathbf{h}_1$ on $Y$  are $G$-concordant.
 Assume that $g$ is non-elliptic  on $Y_M$. Then $\rho^P_g (\mathbf{h}_0)=\rho^P_g (\mathbf{h}_1)$.\\
  Put it differently,   for such $g$ 
  our (higher) rho invariants are in fact {\it concordance invariants}.
  \end{proposition}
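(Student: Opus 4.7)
The plan is to apply the delocalized Atiyah--Patodi--Singer formulae of Section \ref{sec:aps index} to the bordism $W_0 := Y \times [0,1]$ equipped with the concordance metric $\mathbf{h}$, which by hypothesis is $G$-invariant, PSC, and product-type near the boundary; let $W$ denote the associated $b$-manifold obtained by attaching cylindrical ends. Then $\partial W_0 = Y_1 \sqcup (-Y_0)$ with $Y_i := Y \times \{i\}$ carrying $\mathbf{h}_i$. By the Lichnerowicz formula $D_{\mathbf{h}_i}^2 \geq \mathrm{scal}(\mathbf{h}_i)/4 > 0$, hence the boundary Dirac operator $D_{\partial W}$ is $L^2$-invertible and Assumption \ref{assumption:invertibility} is satisfied. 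The same Lichnerowicz argument applied to the bulk metric $\mathbf{h}$ on $W$ shows that $D_W$ is itself $L^2$-invertible.

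For part 1, I would next argue that the smooth index class $\Ind_\infty(D_W) \in K_0(\mathcal{A}^\infty_G(W))$ vanishes: the $L^2$-gap of $D_W$ makes $e^{-sD_W^\mp D_W^\pm}$ decay exponentially in $s$, so the path $\{V_{\mathrm{CM}}(sD_W)\}_{s\in[1,\infty]}$ furnishes a homotopy in the smooth algebra from the Connes--Moscovici idempotent to $e_1$. Consequently $\langle \mathrm{tr}^W_g, \Ind_\infty(D_W)\rangle = 0$, and Theorem \ref{theo:0-delocalized-aps} gives
\[
0 \;=\; \int_{W_0^g} c^g_{W_0^g}\,\mathrm{AS}_g(D_W) \;-\; \tfrac{1}{2}\,\eta_g(D_{\partial W}).
\]
Since $g$ is non-elliptic on $Y$, we have $Y^g = \emptyset$, hence $W_0^g = Y^g \times [0,1] = \emptyset$ and the local term vanishes. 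The boundary contribution splits, with the orientation reversal on $Y_0$ producing a sign, as $\eta_g(D_{\partial W}) = \rho_g(\mathbf{h}_1) - \rho_g(\mathbf{h}_0)$, whence $\rho_g(\mathbf{h}_0) = \rho_g(\mathbf{h}_1)$.

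For part 2, the argument proceeds in parallel using Theorem \ref{main thm} applied to the same bordism. The reduced bordism is $W_M := W/AN \cong Y_M \times [0,1]$ with boundary $Y_{M,1} \sqcup (-Y_{M,0})$; the slice-compatibility of $\mathbf{h}_0, \mathbf{h}_1, \mathbf{h}$ is needed here so that the reduction is well-posed and PSC descends to invertibility of $D_{W_M}$ and of its boundary. The local integrand is supported on $(W_0/AN)^g = Y_M^g \times [0,1]$, which is empty under the assumption that $g$ is non-elliptic on $Y_M$, and the boundary term reassembles to $\rho^P_g(\mathbf{h}_1) - \rho^P_g(\mathbf{h}_0)$; the identity follows.

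The main obstacle is to make rigorous the vanishing of $\Ind_\infty(D_W)$ inside $K_0(\mathcal{A}^\infty_G(W))$ rather than merely in $K_0(C^*(W_0 \subset W)^G)$: $L^2$-invertibility gives the $C^*$ statement immediately, but the smooth statement requires that the heat idempotents $V_{\mathrm{CM}}(sD_W)$ and the homotopy connecting them to $e_1$ belong uniformly to the Harish-Chandra decay class built into the seminorms of Definition \ref{defn:HC}, after passing to the $b$-calculus (Theorem \ref{theo:smooth-index-CM-bis}). A related subtlety in part 2 is that one must verify that the reduction procedure underpinning Theorem \ref{main thm} is compatible with bordisms of slice-compatible PSC metrics, so that invertibility of $D_{W_M}$ and of the reduced boundary operators is genuinely inherited from the PSC hypothesis on $W$.
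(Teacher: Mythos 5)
Your argument is correct and spells out exactly the one-sentence proof given in the paper (``an immediate consequence of the relevant delocalized APS index theorems''): form the cylinder $W_0=Y\times[0,1]$ with the concordance metric, note that the Lichnerowicz formula gives $L^2$-invertibility of both $D_W$ and $D_{\partial W}$ (and hence vanishing of $\Ind(D_W)$), apply Theorem \ref{theo:0-delocalized-aps} (resp.\ Theorem \ref{main thm}), observe that $W_0^g=Y^g\times[0,1]=\emptyset$ (resp.\ $(W_0/AN)^g=\emptyset$) so the local term drops, and use $\eta_g(D_{\partial W})=\rho_g(\mathbf{h}_1)-\rho_g(\mathbf{h}_0)$ coming from the orientation reversal on $Y_0$.

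The caveat in your final paragraph is overstated. Once the Connes--Moscovici projector $V_{\mathrm{CM}}(D_W)$ is known to lie in the smooth algebra (Theorems \ref{theo:smooth-index-CS}, \ref{theo:smooth-index-CM}, \ref{theo:smooth-index-CM-bis}), the smooth index class $\Ind_\infty(D_W)\in K_0(\mathcal{A}^\infty_G(W))$ is well defined, and the inclusion $\mathcal{A}^\infty_G(W)\hookrightarrow C^*(W_0\subset W)^G$ of a dense holomorphically closed subalgebra induces an isomorphism on $K_0$. Therefore vanishing of the $C^*$ index class, which is immediate from $L^2$-invertibility of $D_W$, already forces $\Ind_\infty(D_W)=0$; you do not need to produce an explicit homotopy of idempotents with Harish--Chandra decay inside $\mathcal{A}^\infty_G(W)$. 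The same remark disposes of the analogous worry in part 2.
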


\begin{proof}
In both cases the proof is an immediate consequence of  the relevant delocalized APS index theorems.
\end{proof}

\begin{remark}
In the case of free proper actions of discrete groups, higher rho invariants
have been employed very successfully in studying the moduli space of concordant metrics 
of positive scalar curvature. See \cite{Xie-Yu-moduli} and \cite{PSZ}.
It is a challenge to understand whether the results we have just explained in the context of proper actions of Lie groups can be employed in studying the space
$ \mathcal{R}^+_{G,{\rm slice}} (Y)$ of slice-compatible metrics of PSC (if non-empty)
or the space $\mathcal{R}^+_G (Y)$ (arbitrary $G$-equivariant metrics of PSC).
%It would be also interesting to understand the relationship between the following 3 spaces, especially from the point of view of homotopy theory:
%(i) $ \mathcal{R}^+_{G,{\rm slice}} (Y)$; (ii) $\mathcal{R}^+_G (Y)$ (arbitrary $G$-equivariant metrics of psc);  (iii) $\mathcal{R}^+_K (S)$
%$K$-invariant metrics of psc on the slice $S$. Clearly there is an inclusion $ \mathcal{R}^+_{G,{\rm slice}}(Y)\hookrightarrow \mathcal{R}^+_G (Y)$ ;
%there is also a natural map   $\mathcal{R}^+_K (S) \to  \mathcal{R}^+_{G,{\rm slice}} (Y)$, see \cite{guo-mathai-wang-psc}.
For these questions it would be interesting to develop a $G$-equivariant Stolz' sequence and  investigate
 its basic properties. 
 We leave this task to future research.
\end{remark}

\bibliographystyle{amsalpha}

\end{document}